\documentclass[11pt]{article}

\usepackage[T1]{fontenc}
\usepackage[utf8]{inputenc}
\usepackage{amsmath,amssymb,amsthm}
\usepackage{mathrsfs}
\usepackage{bm}
\usepackage{booktabs}
\usepackage{geometry}
\usepackage{float}
\usepackage{hyperref}
\hypersetup{hidelinks}
\usepackage{cleveref}
\usepackage{enumitem}
\usepackage{graphicx}

\newtheorem{theorem}{Theorem}[section]
\newtheorem{lemma}[theorem]{Lemma}

\theoremstyle{definition}

\newtheorem{remark}[theorem]{Remark}
\newtheorem{assumption}[theorem]{Assumption}

\newcommand{\R}{\mathbb{R}}

\newcommand{\Tcal}{\mathcal{T}}
\newcommand{\Fcal}{\mathcal{F}}

\newcommand{\PiCR}{\Pi_{\rm CR}}

\newcommand{\epsh}{\varepsilon_h}
\newcommand{\jump}[1]{[\![#1]\!]}
\newcommand{\norm}[1]{\left\|#1\right\|}
\newcommand{\seminorm}[1]{\left|#1\right|}
\newcommand{\tnorm}[1]{|\!|\!|#1|\!|\!|}
\newcommand{\norme}[1]{|\!|\!|#1|\!|\!|}

\newcommand{\eps}{\varepsilon}

\title{A Stabilized CR–CR Method for Fourth-Order Elliptic Problems}
\author{
Yule Sun\thanks{
School of Mathematics and Computational Science,
Xiangtan University, Xiangtan 411105, China.
Email: \texttt{sunyuleqn@foxmail.com}.
}
\and
Shudan Tian\thanks{
School of Mathematics and Computational Science,
Xiangtan University, Xiangtan 411105, China.
Email: \texttt{shudan.tian@xtu.edu.cn}.
}
\and
Qian Zhang\thanks{
School of Mathematics, Jilin University,
Changchun 130012, China.
Email: \texttt{qzhang25@jlu.edu.cn}.
}
}
\date{}

\begin{document}
	\maketitle
	
	\begin{abstract}
		We present a CR–CR method for strain gradient elasticity and prove its robustness with respect to the Lamé parameter \(\lambda\) and the length scale \(\iota\). The method introduces the strain as an independent symmetric tensor and discretizes both the strain and the displacement using CR elements. The commuting properties of the CR interpolation operator with the relevant differential operators leads the robustness with respect to $\lambda$. A stabilization term weakly imposes the relation between the strain and the symmetric gradient of the displacement. The method also applies directly to the biharmonic equation and singularly perturbed fourth-order problems. For these problems, we prove that the CR–CR method is equivalent to an RT–CR method proposed by Huang and Tang. This result extends the classical equivalence between the RT–\(P_0\) mixed method and the CR method for the Poisson equation to fourth-order problems. The equivalence shows that the stabilization term arises naturally from the RT–CR mixed formulation.
	\end{abstract}
	
	
	\section{Introduction}
	\label{sec:introduction}
	Let \(\Omega\subset\mathbb R^d\), \(d=2,3\), be a connected bounded domain. In this article, we
	consider the linear strain-gradient elasticity(SGE) problem proposed by \cite{aifantis1992role}.
	\begin{equation}
		\label{eq:sge-model}
		\iota^2\operatorname{div}\Delta\sigma(u)
		-\operatorname{div}\sigma(u)=f
		\quad\text{in }\Omega,
		\qquad
		u=\partial_n u=0
		\quad\text{on }\partial\Omega,
	\end{equation}
	where
	\[
	\sigma(u)=2\mu\varepsilon(u)+\lambda(\operatorname{div}u)I,
	\qquad
	\varepsilon(u)=\frac12\bigl(\nabla u+\nabla u^T\bigr).
	\]
	Here \(0<\iota\le 1\) is the material length-scale parameter, 
	\(0<\mu_0 \le \mu\le \mu_1<\infty\) and \(0\le \lambda\) are the Lam\'e parameters. The SGE model with a single length scale parameter can eliminate stress concentrations and capture size effects~\cite{AskesAifantis2011}. More details of the singular parameter SGE model can be found in \cite{AskesAifantis2011}. 

    One of the main contributions of this work is the development of a robust piecewise linear finite element method for problem \eqref{eq:sge-model} based on the Crouzeix--Raviart (CR) element. And this method needn't any penalty parameters. 
    Designing a low-order finite element method for SGE is challenging.
    Robustness with respect to $\lambda$ requires approximation properties for both the divergence of the displacement and its gradient. This implies a direct discretization using standard $C^0$ piecewise quadratic Lagrange elements \cite{BrennerSung2005} is failed. Robustness with respect to $\iota$ requires the finite element method to handle both second-order and fourth-order problems, which means that we can not directly use Morley element\cite{Morley1968,Trygve2001}. Clearly, the CR element cannot be used directly to solve the SGE problem either.
Therefore, define a auxiliary variable $\phi=\epsilon(u)$ and propose the following discrete formulation:
find
$
(u_h,\phi_h)\in
[\mathrm{CR}_{h,0}]^d
\times
[\mathrm{CR}_{h,0}]^{d\times d}_{\mathrm{sym}}
$
such that, for all $(v_h,\psi_h)\in [\mathrm{CR}_{h,0}]^d
\times
[\mathrm{CR}_{h,0}]^{d\times d}_{\mathrm{sym}}$,
\[
\iota^2
(\nabla_h(\mathcal{C}\phi_h),\nabla_h\psi_h)
+
(\mathcal{C}\epsilon_h(u_h),\epsilon_h(v_h))
+
s^{sg}_h((\phi_h,u_h),(\psi_h,v_h))
=
(f,v_h).
\]
Here, $\mathcal{C}$ is the elasticity tensor, defined by
$\mathcal{C}\tau=2\mu\tau+\lambda\operatorname{tr}(\tau)I$
for symmetric tensors $\tau$.
The space $\mathrm{CR}_{h,0}$ is the CR space with zero mean values
on boundary faces, and stabilization term is defined as
\[
\begin{aligned}
s^{sg}_h((\phi_h,u_h),(\psi_h,v_h)):=&
\iota^2\sum_{T\in\mathcal{T}_h}h_T^{-2}
\bigl(
\epsilon_h(u_h)-Q_0\phi_h,\,
\epsilon_h(v_h)-Q_0\psi_h
\bigr)_T
+
\sum_{F\in\mathcal{F}_h^i}h_F^{-1}
\langle [u_h],[v_h]\rangle_F.
\end{aligned}
\]
Here $Q_0$ the $L^2$-orthogonal projection operator. The first term in the stabilization weakly enforces
$\phi=\epsilon(u)$, while the second term is used to ensure
a discrete Korn inequality. 
The same approach can also be applied to the biharmonic equation
and the singularly perturbed fourth-order problem.
In both cases, the jump term can be omitted.
For the biharmonic equation, we introduce $\phi=\nabla u$ and seek
$
(u_h,\phi_h)\in
\mathrm{CR}_{h,0}\times[\mathrm{CR}_{h,0}]^d
$
such that, for all
$(v_h,\psi_h)\in
\mathrm{CR}_{h,0}\times[\mathrm{CR}_{h,0}]^d$,
\[
(\nabla_h\phi_h,\nabla_h\psi_h)
+
s_h^{\mathrm{bi}}((\phi_h,u_h),(\psi_h,v_h))
=
(f,v_h),
\]
where
\[
s_h^{\mathrm{bi}}((\phi_h,u_h),(\psi_h,v_h))
:=
\sum_{T\in\mathcal{T}_h}h_T^{-2}
\bigl(
\nabla_hu_h-Q_0\phi_h,\,
\nabla_hv_h-Q_0\psi_h
\bigr)_T.
\]
The CR--CR formulation for the biharmonic equation is similar in form
to the Reissner--Mindlin plate formulation.
The factor $h_T^{-2}$ in the stabilization plays a role analogous
to the factor $t^{-2}$ in the shear term of the normalized
Reissner--Mindlin model.
In the CR--CR method, the finite element solution converges
to the solution of the biharmonic equation as the mesh size $h$
tends to zero.
In the Reissner--Mindlin model, the solution approaches that
of the Kirchhoff--Love plate model as the plate thickness $t$
tends to zero.
Both limits recover the constraint $\phi=\nabla u$,
although $h$ is a discretization parameter and $t$ is a physical parameter.

The idea of introducing $\phi=\nabla u$ to reduce the order
of the biharmonic equation is not new.
In 2017, Gallistl~\cite{Gallistl2017} established a stable splitting
of elliptic problems of order $2m$ into one generalized Stokes problem
and $2(m-1)$ Poisson-type problems.
Chen, Pani, and Qiu~\cite{ChenPaniQiu2022} proposed a stabilized
mixed finite element method using $\mathrm{div}$-conforming piecewise polynomials of degree
$k+1$ for $\phi$ and continuous piecewise polynomials of degree
$k+2$ for $u$, with $k\ge0$.
Their formulation involves a penalty parameter.
In the lowest-order case with a constant coefficient,
the penalty parameter can be fixed at $1$. 

Another contribution of this work is to derive explicit formulas
that allow the numerical solutions of the proposed method and
the mixed method recently introduced by Huang and
Tang~\cite{HuangTang2025} to be reconstructed from each other.
These formulas extend Marini's 
formula in \cite{Marini1985}, which relates the CR and
$\mathrm{RT}_0$--$ P_0$ solutions of the Poisson problem,
to fourth-order problems. For the SGE problem, the proposed method is not exactly equivalent
to the recent method of Huang and Tang~\cite{HuangTang2026SGE}
unless the stabilization term is modified. Their method
corresponds to a more complicated form of stabilization.
   
	Several finite element methods have also been developed for
	strain-gradient elasticity.  Early approaches include continuous/discontinuous
	Galerkin approximations and mixed formulations
	\cite{engel2002continuous,amanatidou2002mixed}. Until 2017, there some numerical analysis for SGE problem and proposed some $\iota$-robust nonconforming
	\(H^2\)-type element \cite{li2017two,liao2019family,li2021h2korn}.  Mixed methods avoid globally \(H^2\)-conforming
	displacement spaces by introducing additional variables.  This direction
	includes a robust lower-order mixed method, Taylor--Hood type methods for the
	nearly incompressible regime, and efficient three-dimensional formulations
	\cite{chen2023robust,liao2023taylor,riesselmann2024efficient}.  More recent
	alternatives employ Nitsche's technique, \(C^0\) interior penalties,
	nonconforming virtual elements, or isogeometric spaces
	\cite{chen2025nitsche,huang2025optimal,chen2026robust,
		huang2026c0vem,wriggers2023virtual,hosseini20223d}.

	\section{Notation, CR spaces, and projections}
	\label{sec:preliminaries}
	
	Given a bounded domain \(G \subset \mathbb R^d\) and a non-negative integer \(m\), let \(H^m(G)\) be the usual Sobolev space of functions on \(G\), and \(H^m(G;\mathbb K):=H^m(G)\otimes\mathbb K\), where \(\mathbb{K}\in \{\mathbb R,\,\mathbb R^d,\,\mathbb R^{d\times d},\, \mathbb R^{d\times d\times d}\}\). The corresponding norm and semi-norm are denoted respectively by \(\|\cdot\|_{m,G}\) and \(|\cdot|_{m,G}\). Let \((\cdot,\cdot)_G\) be the standard inner product on \(L^2(G;\mathbb K):=L^2(G)\otimes\mathbb K\). For a positive number \(s\) that is not an integer, \(H^s(G;\mathbb K)\) is the fractional order Sobolev space. Let \(m=\lfloor s\rfloor\) be the largest integer less than \(s\). The semi-norm \(|v|_{s,G}\) and the norm \(\|v\|_{s,G}\) are respectively given by
	\begin{align*}
		|v|_{s,G}^2
		&=
		\sum_{|\alpha|=m}
		\int_{G}\int_{G}
		\frac{
			\left|(\partial^\alpha v)(x)-(\partial^\alpha v)(y)\right|^2
		}{
			|x-y|^{d+2(s-m)}
		}
		\,\mathrm{d}x\,\mathrm{d}y,
		\\
		\|v\|_{s,G}^2
		&=
		\|v\|_{m,G}^2
		+
		|v|_{s,G}^2.
	\end{align*}
	If \(G\) is \(\Omega\), we abbreviate \(\|\cdot\|_{m,G}\), \(|\cdot|_{m,G}\) and \((\cdot,\cdot)_{G}\) by \(\|\cdot\|_m\), \(|\cdot|_m\) and \((\cdot,\cdot)\), for all real orders \(m\ge 0\).
	
	Let $\mathcal T_h$ be a shape-regular simplicial partition of $\Omega$ with maximum mesh size $h=\max_{T\in\mathcal T_h}h_T$ and $h_T:=\operatorname{diam}(T)$.  Let
	\(\mathcal F_h=\mathcal F_h^i\cup\mathcal F_h^\partial\)
	be the set of interior and boundary faces of the triangulation, and let
	\(\mathcal E_h=\mathcal E_h^i\cup\mathcal E_h^\partial\)
	be the corresponding decomposition of its
	\((d-2)\)-dimensional subsimplices. 
	When \(d=2\), \(\mathcal E_h\) is precisely the set of mesh vertices. The broken gradient \(\nabla_h\) is defined as the elementwise counterpart of \(\nabla\) with respect to \(\mathcal T_h\).
	
	For each interior face
	\(F=T^+\cap T^-\in\mathcal F_h^i\), fix the unit normal
	\(n_F\) pointing from \(T^+\) to \(T^-\). The jump and average of a piecewise function
	\(v\) across \(F\) are defined by
	\[
	\jump{v}:=v|_{T^+}-v|_{T^-},
	\qquad
	\{v\}_F:=\frac12\left(v|_{T^+}+v|_{T^-}\right).
	\]
	For a boundary face \(F\in\mathcal F_h^\partial\), \(n_F\) is the outward
	unit normal, and we set
	\[
	\jump{v}:=v|_F,
	\qquad
	\{v\}_F:=v|_F.
	\]
	These conventions are applied componentwise to vector- and
	tensor-valued functions. 
	
	We use \(\mathrm{CR}_h\) to denote the lowest-order Crouzeix--Raviart element on the triangulation \(\mathcal{T}_h\); see \cite[Example~4]{crouzeix1973conforming}. 
	For an integer \(k\ge 0\), let $\mathbb P_k(T)$ denote the space of polynomials of degree at most $k$ on $T$. The Crouzeix--Raviart space is defined as follows: For any \(T\in\mathcal T_h\), the local shape function space is \(\mathbb P_1(T)\) and the nodal variables are given by
	\begin{equation}
		\label{eq:cr-dofs}
		\left\{\frac{1}{|F|}\int_F v\mathrm{d}s,\qquad \forall F\in \mathcal F_h,~F\subset \partial T\right\}.
	\end{equation}
	The global space is defined by
	\begin{equation}
		\label{eq:cr-global}
		\mathrm{CR}_h:=\left\{
		v\in \mathbb P_1(\mathcal T_h):\text{\eqref{eq:cr-dofs} 
			is single-valued on all } F\in \mathcal F^i_h \right\}.
	\end{equation}
	And define
	\begin{equation}
		\mathrm{CR}_{h0}=\left\{
		v\in \mathrm{CR}_h:\,\int_F v\mathrm{d}s=0,\quad \forall F\in \mathcal F^{\partial}_h\right\}.
	\end{equation}
	
	Let $Q_k:L^2(\Omega;\mathbb K)\to \mathbb P_k(\mathcal T_h)\otimes\mathbb K$ be the the elementwise $L^2$ projection, where \(\mathbb P_k(\mathcal T_h):=\prod_{T\in\mathcal T_h}\mathbb P_k(T)\).
	Let $\Pi_{\rm CR}:H^1(\Omega;\mathbb K)\to {\rm{CR}}_h\otimes \mathbb K$ be the CR interpolation operator, defined by
	\[
	\frac{1}{|F|}\int_F \Pi_{\rm CR} v\mathrm{d}s= \frac{1}{|F|}\int_F v\mathrm{d}s,\qquad \forall F\in\mathcal F_h,\]
    for all \(v\in H^1(\Omega,\mathbb K)\).
	
	\begin{lemma}
		\label{lem:cr-commuting-general}
		For $v\in H^1(\Omega)$ and $\psi\in H^1(\Omega;\R^d)$,
		\begin{equation}
			\label{eq:cr-commuting-general}
			\nabla_h(\PiCR v)=Q_0(\nabla v),
			\qquad
			\nabla_h(\PiCR \psi)=Q_0(\nabla \psi) .
		\end{equation}
		Define \(\varepsilon_h(\psi)=(\nabla_h \psi+(\nabla_h \psi)^\intercal)/2\) and \(\operatorname{div}_h:=\nabla_h\cdot\). Taking the symmetric part and the trace of the matrix identity yields
		\begin{equation}
			\label{eq:cr-commuting-sym}
			\epsh(\PiCR \psi)=Q_0(\eps(\psi)),
			\qquad
			\operatorname{div}_h(\Pi_{\rm CR}\psi)=Q_0(\operatorname{div} \psi).
		\end{equation}
	\end{lemma}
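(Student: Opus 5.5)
The plan is to prove the scalar identity $\nabla_h(\PiCR v)=Q_0(\nabla v)$ elementwise, using the divergence theorem on each simplex, and then obtain the rest componentwise. Fix $T\in\Tcal_h$. Since $\PiCR v|_T\in\mathbb P_1(T)$, its gradient is a constant vector, so it equals its own mean over $T$. Integrating by parts on $T$ gives
\[
\int_T \nabla(\PiCR v)\,\mathrm{d}x=\sum_{F\subset\partial T} n_{T,F}\int_F \PiCR v\,\mathrm{d}s ,
\]
where $n_{T,F}$ is the outward unit normal of $T$ on $F$ and is constant on each flat face. By the definition of $\PiCR$, each face integral $\int_F\PiCR v\,\mathrm{d}s$ equals $\int_F v\,\mathrm{d}s$. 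Here the trace of $v\in H^1(\Omega)$ on $F$ is single-valued, so it does not matter from which side $F$ is viewed.

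Integrating by parts backwards, now for $v\in H^1(T)$ (valid by density of smooth functions and continuity of the trace), gives
\[
\sum_{F\subset\partial T} n_{T,F}\int_F v\,\mathrm{d}s=\int_T\nabla v\,\mathrm{d}x .
\]
Dividing by $|T|$ yields $\nabla(\PiCR v)|_T=\frac1{|T|}\int_T\nabla v\,\mathrm{d}x=Q_0(\nabla v)|_T$, which is the first identity in \eqref{eq:cr-commuting-general}.

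For $\psi\in H^1(\Omega;\R^d)$, the operator $\PiCR$ acts componentwise by definition, because the face means are taken component by component. Applying the scalar result to each $\psi_i$ gives the $i$-th row of $\nabla_h(\PiCR\psi)=Q_0(\nabla\psi)$.

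Finally, $Q_0$ is linear and acts entrywise. It therefore commutes with transposition and with the trace, so
\[
\epsh(\PiCR\psi)=\tfrac12\bigl(Q_0\nabla\psi+(Q_0\nabla\psi)^\intercal\bigr)=Q_0(\eps(\psi)),
\qquad
\operatorname{div}_h(\PiCR\psi)=\operatorname{tr}\,Q_0(\nabla\psi)=Q_0(\operatorname{div}\psi).
\]
This gives \eqref{eq:cr-commuting-sym}.

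The only point that needs care is justifying the integration by parts for a merely $H^1$ function with its face traces. This is standard: it follows from the Gauss–Green formula on Lipschitz domains, here a simplex, together with the trace theorem. Apart from that, the argument is routine.
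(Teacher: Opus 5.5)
Your proposal is correct and follows the paper's proof: on each element you use $\int_T\nabla(\PiCR v)\,\mathrm{d}x=\int_{\partial T}\PiCR v\,n\,\mathrm{d}s=\int_{\partial T}v\,n\,\mathrm{d}s=\int_T\nabla v\,\mathrm{d}x$ together with the constancy of $\nabla(\PiCR v)|_T$, and then extend componentwise. You also spell out the symmetric-part and trace step for \eqref{eq:cr-commuting-sym} and the justification of Gauss--Green for $H^1$ traces, both of which the paper leaves implicit.
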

	
	\begin{proof}
		For the scalar identity, for \(T\in \mathcal T_h\)
		\[
		\int_T \nabla(\PiCR v)\mathrm{d}x
		=\int_{\partial T}\PiCR v\, n\,\mathrm{d}s
		=\int_{\partial T}v\, n\,\mathrm{d}s
		=\int_T\nabla v\,\mathrm{d}x.
		\]
		The vector-valued identity follows componentwise.
	\end{proof}
	The following Lemma can be obtained in \cite{{LasisSuli2003}}.
	\begin{lemma}
		\label{lem:CR-discrete-poincare}
		For all \(v_h\in\mathrm{CR}_{h,0}\),
		\begin{equation}
			\label{eq:CR-discrete-poincare}
			\norm{v_h}_0
			\lesssim
			\norm{\nabla_h v_h}_0.
		\end{equation}
		The estimate also holds componentwise for vector- and
		tensor-valued CR functions.
	\end{lemma}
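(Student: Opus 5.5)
I would prove the discrete Poincaré inequality for \(\mathrm{CR}_{h,0}\) by duality, which avoids compactness arguments and gives a constant depending only on \(\Omega\) and shape regularity. The plan is to split off the conforming part of \(v_h\) and control the nonconforming remainder by jumps.

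First, introduce an enriching (averaging) operator \(E_h:\mathrm{CR}_{h,0}\to H^1_0(\Omega)\). It maps \(v_h\) to the continuous piecewise linear function whose value at each interior vertex is the average of the values of \(v_h\) on the elements sharing that vertex, and whose boundary vertex values are zero. The standard local argument is: on each element, the difference \(v_h-E_hv_h\) is controlled by nodal differences, and these are bounded by face jumps of \(v_h\) on the patch. Since the jump \(\jump{v_h}\) is a linear function on \(F\) with zero mean, it is bounded by \(h_F\) times the tangential jump of the gradient. This yields
\[
\sum_{T}h_T^{-2}\|v_h-E_hv_h\|_{0,T}^2+\|\nabla_h(v_h-E_hv_h)\|_0^2\lesssim \sum_{F\in\mathcal F_h}h_F^{-1}\|\jump{v_h}\|_{0,F}^2\lesssim \|\nabla_h v_h\|_0^2 .
\]
The boundary faces are included because \(\int_F v_h\,\mathrm ds=0\) there.

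The last inequality is the key step. I would obtain it by a scaling argument: the trace inequality on each neighbouring element, applied to \(v_h\) minus its face mean (which is single valued), gives \(h_F^{-1/2}\|v_h-\bar v_F\|_{0,F}\lesssim \|\nabla v_h\|_{0,T}\). The mean values cancel in the jump, so the jump is bounded without any zeroth-order term. This is the main obstacle, and the zero-mean property of CR jumps is exactly what makes it work.

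Finally, I would combine the pieces:
\[
\|v_h\|_0\le \|E_hv_h\|_0+\|v_h-E_hv_h\|_0\lesssim |E_hv_h|_1+h\|\nabla_hv_h\|_0\lesssim\|\nabla_hv_h\|_0 .
\]
Here the continuous Poincaré inequality in \(H^1_0(\Omega)\) is applied to \(E_hv_h\), together with \(|E_hv_h|_1\le\|\nabla_hv_h\|_0+\|\nabla_h(v_h-E_hv_h)\|_0\). The vector- and tensor-valued statements follow by applying the scalar result to each component, since each component lies in \(\mathrm{CR}_{h,0}\) and the norms are sums of the componentwise norms.
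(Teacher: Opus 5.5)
Your proof is correct, but it is not the paper's route, because the paper gives no proof. It simply cites Lasis--S\"uli. Their result is a Poincar\'e inequality for general broken $H^1$ functions, of the form
\[
\norm{v}_0^2\lesssim\norm{\nabla_h v}_0^2+\sum_{F\in\Fcal_h}h_F^{-1}\norm{\jump{v}}_{0,F}^2 .
\]
To specialize this to $\mathrm{CR}_{h,0}$, one still needs exactly your key step, $\sum_{F\in\Fcal_h}h_F^{-1}\norm{\jump{v_h}}_{0,F}^2\lesssim\norm{\nabla_h v_h}_0^2$. It follows from the vanishing face means of the jumps, boundary faces included.

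Instead of citing, you prove the broken inequality directly for CR functions. You use a vertex-averaging operator into the conforming $\mathbb P_1$ subspace of $H^1_0(\Omega)$ and the continuous Poincar\'e inequality, in the style of Brenner's Poincar\'e--Friedrichs inequalities for piecewise $H^1$ functions.

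Your route gives a self-contained argument whose constant depends only on $\Omega$ and shape regularity. It also uses only machinery the paper relies on later anyway. Indeed, the averaging estimate \eqref{eq:H1-consistency-estimate} for $\Pi_{\rm A}$ would give the lemma by your final step verbatim. The citation instead gives generality: the inequality holds for arbitrary broken $H^1$ functions.

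Two small remarks. First, your opening sentence calls the method a duality argument, but what you actually carry out is an enriching-operator argument. A genuine duality proof would write $\norm{v_h}_0^2=(v_h,\operatorname{div}q)$ for some $q\in H^1(\Omega;\R^d)$ with $\operatorname{div}q=v_h$ and $\norm{q}_1\lesssim\norm{v_h}_0$, then integrate by parts elementwise.

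Second, at boundary vertices, where you set $E_hv_h(z)=0$, state explicitly how $|v_h|_T(z)|$ is controlled. Chain jumps across interior faces containing $z$ until you reach an element with a boundary face containing $z$. On that face the ``jump'' is the trace itself, which also has zero mean.
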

	
	
	\section{The singular perturbation problem}
	\label{sec:biharmonic}
	
	In this section, we apply the CR–CR scheme for the solution of the singularly perturbed biharmonic model. Given \(f\in L^2(\Omega)\), the fourth-order elliptic singular perturbation problem is to find \(u\in H^2_0(\Omega)\) satisfying
	\begin{equation}
		\label{eq:bih-model}
		\iota^2\Delta^2u-\Delta u=f\quad\text{in }\Omega,
		\qquad
		u=\partial_nu=0\quad\text{on }\partial\Omega,
	\end{equation}
	where \(n\) is the unit outward normal to \(\partial \Omega\), and \(\iota\) is a small positive parameter. The variational formulation of problem \eqref{eq:bih-model} is to find 
	\(u\in H_0^2(\Omega)\) such that
	\begin{equation}
		\label{eq:bih-weak}
		\iota^2(\nabla^2u,\nabla^2v)+(\nabla u,\nabla v)=(f,v)
		\qquad\forall v\in H_0^2(\Omega).
	\end{equation}
	
	When \(\iota=0\), the fourth-order elliptic singular perturbation problem \eqref{eq:bih-model} reduces to the Poisson equation
	\begin{equation}
		\label{eq:Poisson-model}
		-\Delta u_0=f\quad\text{in }\Omega,
		\qquad
		u_0=0\quad\text{on }\partial\Omega.
	\end{equation}
	\begin{assumption}
		\label{assum:Regularity}
		Assume that the Poisson equation \eqref{eq:Poisson-model} satisfies the regularity
		\begin{equation}
			\label{eq:Poisson-regularity}
			\|u_0\|_2\lesssim \|f\|_0,
		\end{equation}
		and that the singularly perturbed fourth-order problem \eqref{eq:bih-model} satisfies the
		regularity
		\begin{equation}
			\label{eq:bib-regularity}
			\|u-u_0\|_1+\iota\|u\|_2+\iota^2\|u\|_3\lesssim\iota^{1/2}\|f\|_0.
		\end{equation}
	\end{assumption}
	When \(\Omega\) is convex, we refer to \cite{Gao2020} for the regularity \eqref{eq:Poisson-regularity} and to \cite[Lemma 5.1]{Trygve2001} and \cite[Lemma 4]{Guzmán2012} for the regularity \eqref{eq:bib-regularity} in two and three dimensions.
	
	The finite element is defined as follows: For any \(T\in \mathcal T_h \), the local shape function space is \(\mathbb P_1(T)\times \mathbb P_1(T;\mathbb R^d)\) and the nodal variables are given by
	\begin{equation}
		\label{eq:bih-dofs}
		\left\{\frac{1}{|F|}\int_F v\mathrm{d}s,\quad\frac{1}{|F|}\int_F \psi\mathrm{d}s,\qquad \forall F\in \mathcal F_h,~F\subset \partial T\right\},
	\end{equation}
	for \((v,\psi)\in \mathbb P_1(T)\times \mathbb P_1(T,\mathbb R^d)\). Associated with the partition \(\mathcal T_h\), the global space  
	\begin{equation}
		\label{eq:bih-global-space}
		V_h^{\rm sp}:=\left\{
		(v,\psi)\in \mathbb P_1(\mathcal T_h)\times \mathbb P_1(\mathcal T_h,\mathbb R^d):
		\text{ \eqref{eq:bih-dofs} is single‑valued on all } F\in \mathcal F_h^i
		\right\}.
	\end{equation}
	And define
	\begin{equation}
		V_{h0}^{\rm sp}=\left\{
		(v,\psi)\in V_h^{\rm sp}:\,
		\frac{1}{|F|}\int_F v\mathrm{d}s=0,~\frac{1}{|F|}\int_F \psi\mathrm{d}s=0,
		\quad\forall F\in\mathcal F^\partial_h\right\}.
	\end{equation}
	
	Throughout the remainder of the paper, we use boldface symbols to denote ordered pairs. For \(\bm w_h=(w_h,p_h),\bm v_h=(v_h,\psi_h)\in V_h^{\rm sp}\), set discrete bilinear forms
	\begin{align*}
		a_h^{\rm sp}(\bm w_h,\bm v_h):=&\iota^2(\nabla_hp_h,\nabla_h\psi_h)+(\nabla_h w_h,\nabla_h v_h),\\
		s_h^{\rm sp}(\bm w_h,\bm v_h):=&\iota^2\sum_{T\in \mathcal T_h}h^{-2}_T(\nabla_h w_h-Q_0p_h,\nabla_hv_h-Q_0\psi_h)_T.
	\end{align*}
	where \(h_F:=\operatorname{diam}(F)\). Then the CR-CR method for problem \eqref{eq:bih-weak} is to find \(\bm u_h=(u_h,\phi_h)\in V_{h0}^{\rm sp}\) such that
	\begin{equation}
		\label{eq:bih-discrete}
		a_h^{\rm sp}(\bm u_h,\bm v_h)+s_h^{\rm sp}(\bm u_h,\bm v_h)=(f,v_h),\qquad\forall \bm v_h\in V_{h0}^{\rm sp}.
	\end{equation}
	For any \(\bm v_h\in V_h^{\rm sp}\), define a semi-norm 
	\begin{equation}
		\label{eq:bih-norme}
		\tnorm{\bm v_h}^2:=a_h^{\rm sp}(\bm v_h,\bm v_h)+s_h^{\rm sp}(\bm v_h,\bm v_h).
	\end{equation}
	For all \(\bm v_h\in V_{h0}^{\rm sp}\), by \eqref{eq:CR-discrete-poincare}, we have 
	\begin{equation}
		\label{eq:bih-H1-control}
		\norm{v_h}_0+\norm{\nabla_h v_h}_0
		+\iota\norm{\psi_h}_0+\iota\norm{\nabla_h\psi_h}_0
		\lesssim\tnorm{\bm v_h}.
	\end{equation}
	Hence, \(\tnorm{\cdot}\) defines a norm on \(V_{h0}^{\rm sp}\). The well-posedness of CR--CR method \eqref{eq:bih-discrete} follows immediately from \eqref{eq:bih-H1-control}.
	
	For \(v\in H^2(\Omega)\), set
	\begin{equation}
		\label{eq:bih-interpolant}
		\Pi_h^{\rm sp}v:=(\PiCR v,\PiCR\nabla v)\in V_h^{\rm sp}.
	\end{equation}
	If \(v\in H^2_0(\Omega)\), then \(\Pi_h^{\rm sp} v\in V_{h0}^{\rm sp}\). Since the stabilization term within \(\tnorm{(v,\nabla v)- \Pi_h^{\rm sp} v}\) need not tend to zero as the mesh is refined,
	we introduce the semi-norm 
	\begin{equation}
		\label{eq:bih-semi-norm}
		|\bm v|^2_{\rm E}:=a_h^{\rm sp}(\bm v,\bm v),\qquad \bm v=(v,\psi)\in (H^2(\Omega)\times H^1(\Omega;\mathbb R^d))+V_h^{\rm sp},
	\end{equation}
	and consider error estimates in this semi-norm. Let \(u\) and \(\bm u_h\) be the solutions to \eqref{eq:bih-weak} and \eqref{eq:bih-discrete}, respectively.
	By the triangle inequality
	\[
	|(u,\nabla u)-\bm u_h|_{\rm E}\le |(u,\nabla u)-\Pi_h^{\rm sp} u|_{\rm E}+\tnorm{\Pi_h^{\rm sp} u- \bm u_h}.\]
	Thus, in order to derive estimates for the interpolation error \(|(u,\nabla u)-\Pi_h^{\rm sp} u|_{\rm E}\) and the stabilization in \(\tnorm{\Pi_h^{\rm sp} u- \bm u_h}\), we present the following lemma.
	\begin{lemma}
		\label{lem:bih-fractional-means}
		Let \(0<s\le1\). For every field
		\(v\in H^{s}(\Omega;\mathbb K)\),
		\begin{equation}
			\label{eq:bih-fractional-means}
			\norm{(I-Q_0) v}_0^2
			+\sum_{F\in\Fcal_h^i}h_F
			\norm{\jump{Q_0 v}
			}_{0,F}^2
			\lesssim h^{2s}\seminorm{v}_{s}^2,
		\end{equation}
		where \(I\) is the identity operator.
	\end{lemma}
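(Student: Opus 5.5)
We prove the two contributions on the left of \eqref{eq:bih-fractional-means} separately. Both rest on a local fractional Poincaré estimate, obtained by scaling from a reference simplex. For each \(T\in\Tcal_h\) and \(0<s\le1\),
\begin{equation*}
\norm{(I-Q_0)v}_{0,T}\lesssim h_T^{s}\seminorm{v}_{s,T}.
\end{equation*}
For \(s=1\) this is the classical Poincaré inequality on convex domains. For \(0<s<1\) it can be proved directly from the definition of the Gagliardo seminorm. Since \((Q_0v)|_T=|T|^{-1}\int_T v(y)\,\mathrm dy\), Jensen's inequality gives
\begin{equation*}
\norm{(I-Q_0)v}_{0,T}^2\le |T|^{-1}\int_T\int_T|v(x)-v(y)|^2\,\mathrm dx\,\mathrm dy .
\end{equation*}
Because \(|x-y|\le h_T\) on \(T\times T\), the right-hand side is bounded by
\begin{equation*}
|T|^{-1}h_T^{d+2s}\int_T\int_T\frac{|v(x)-v(y)|^2}{|x-y|^{d+2s}}\,\mathrm dx\,\mathrm dy\lesssim h_T^{2s}\seminorm{v}_{s,T}^2,
\end{equation*}
where the last step uses shape regularity, \(|T|\simeq h_T^d\). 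Summing over \(T\), and using that the seminorms of the elements are superadditive (the double integral over \(\Omega\times\Omega\) contains the diagonal blocks \(T\times T\)), gives the first term.

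For the jump term, let \(F=T^+\cap T^-\) and write \(\omega_F=T^+\cup T^-\). Since \(Q_0v\) is constant on each element, \(\jump{Q_0v}=Q_0^{T^+}v-Q_0^{T^-}v\) is a constant and
\begin{equation*}
h_F\norm{\jump{Q_0v}}_{0,F}^2\simeq h_F^{d}\,|Q_0^{T^+}v-Q_0^{T^-}v|^2 .
\end{equation*}
Let \(\bar v\) denote the mean of \(v\) over \(\omega_F\). The triangle inequality gives
\begin{equation*}
|Q_0^{T^\pm}v-\bar v|=|T^\pm|^{-1}\Bigl|\int_{T^\pm}(v-\bar v)\Bigr|\le|T^\pm|^{-1/2}\norm{v-\bar v}_{0,\omega_F}.
\end{equation*}
It then suffices to prove \(\norm{v-\bar v}_{0,\omega_F}\lesssim h_F^s\seminorm{v}_{s,\omega_F}\). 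The Jensen argument above applies verbatim on \(\omega_F\), because \(\operatorname{diam}\omega_F\lesssim h_F\) and \(|\omega_F|\simeq h_F^d\); convexity of \(\omega_F\) is not needed for this fractional case. For \(s=1\) we use instead the standard Poincaré inequality on the patch \(\omega_F\). It holds with a constant depending only on shape regularity, since \(\omega_F\) is the image of one of finitely many reference configurations, and is connected through a \((d-1)\)-dimensional face. Combining these bounds gives \(h_F\norm{\jump{Q_0v}}_{0,F}^2\lesssim h_F^{2s}\seminorm{v}_{s,\omega_F}^2\).

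We finish by summing over interior faces. Each element belongs to at most \(d+1\) patches, so the patch double integrals \(\int_{\omega_F}\int_{\omega_F}\) overlap a bounded number of times and their sum is \(\lesssim\seminorm{v}_{s}^2\). Using \(h_F\le h\) completes \eqref{eq:bih-fractional-means}. The main obstacle is this patch step: making sure the Poincaré constant on \(\omega_F\) is uniform, which matters when \(s=1\), and that the finite overlap argument legitimately bounds the sum of local Gagliardo seminorms by the global one. Both points follow from shape regularity and from the nonnegativity of the integrand.
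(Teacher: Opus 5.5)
Your proposal is correct and follows the same route as the paper: insert the patch mean $Q_{\omega_F}v$, bound the constant jump of $Q_0v$ by local $L^2$-projection errors on $\omega_F$, and conclude with the local estimate $\|(I-Q_{\omega_F})v\|_{0,\omega_F}\lesssim h_F^s|v|_{s,\omega_F}$. Your Jensen argument proves that estimate for $0<s<1$, where the paper only cites it as standard. One small correction: for $s=1$ the uniform Poincar\'e constant on $\omega_F$ does not come from ``finitely many reference configurations'', since under shape regularity the affinely normalized patches form a compact, not finite, family. The bound is still true, for instance because $\omega_F$ is star-shaped with respect to a ball of radius $\simeq h_F$ centred at the barycentre of $F$. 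You can also avoid the patch inequality altogether by splitting $Q_0^{T^+}v-Q_0^{T^-}v$ through the face mean of $v$ and applying the elementwise trace inequality.
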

	
	\begin{proof}
		Using the trace inequality, we obtain
		\[h_F\|\jump{Q_0v}\|^2_{0,F}
		=h_F\|\jump{Q_0v-Q_{\omega_F}v}\|^2_{0,F}
		\lesssim \|(I-Q_0)v\|^2_{0,\omega_F}+\|(I-Q_{\omega_F})v\|^2_{0,\omega_F},\]
		where $Q_{\omega_F}$ denotes the constant $L^2$-projection onto the patch $\omega_F$. The standard \(L^2\)-projection error estimate then gives \eqref{eq:bih-fractional-means}.
	\end{proof}
	
		%
	
	\begin{lemma}
		\label{lem:bih-convergence}
		Let \(u\in H^{2+s}(\Omega)\cap H_0^2(\Omega)\) be the solution of \eqref{eq:bih-weak}, \(0<s\le 1\). Under Assumption~\ref{assum:Regularity}, we have 
		\begin{align}
			|(u,\nabla u)-\bm u_h|_{\rm E}
			\lesssim&
			\min\Bigl\{
			\iota h^s|u|_{2+s}
			+h|u|_2,
            (\iota^{1/2}+h)\norm{f}_0\Bigr\}
			\notag\\
			&
			+\sup_{0\ne\bm v_h\in V_{h0}^{\rm sp}}\frac{a_h^{\rm sp}(\Pi_h^{\rm sp} u,\bm v_h)-(f,v_h)}{\tnorm{\bm v_h}},
			\label{eq:bih-energy-low-reg}
		\end{align}
		
	\end{lemma}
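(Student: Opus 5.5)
We follow the splitting already displayed before Lemma~\ref{lem:bih-fractional-means}:
\[
|(u,\nabla u)-\bm u_h|_{\rm E}\le |(u,\nabla u)-\Pi_h^{\rm sp} u|_{\rm E}+\tnorm{\Pi_h^{\rm sp} u-\bm u_h}.
\]
We treat the two terms separately.

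\emph{Discrete error.} Set $\bm e_h:=\Pi_h^{\rm sp}u-\bm u_h\in V_{h0}^{\rm sp}$. By \eqref{eq:bih-discrete},
\[
\tnorm{\bm e_h}^2=a_h^{\rm sp}(\Pi_h^{\rm sp}u,\bm e_h)-(f,e_h)+s_h^{\rm sp}(\Pi_h^{\rm sp}u,\bm e_h).
\]
Hence $\tnorm{\bm e_h}$ is bounded by the supremum appearing in \eqref{eq:bih-energy-low-reg} plus $s_h^{\rm sp}(\Pi_h^{\rm sp}u,\Pi_h^{\rm sp}u)^{1/2}$, by Cauchy--Schwarz for the semi-definite form $s_h^{\rm sp}$.

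The key observation is that Lemma~\ref{lem:cr-commuting-general} gives $\nabla_h\PiCR u=Q_0\nabla u$. Also, $Q_0\PiCR\nabla u=Q_0\nabla u$, because on a simplex the average of a $\mathbb P_1$ function equals the mean of its face averages, and $\PiCR$ preserves those face averages. (Equivalently, $\PiCR w$ interpolates the face midpoints/face means, and the barycenter value of a $\mathbb P_1$ function is the average of its face-mean values. We would double-check this identity; it is the main technical point.) If it holds, then
\[
\nabla_h\PiCR u-Q_0\PiCR\nabla u=0,
\]
so the stabilization of the interpolant vanishes identically, and $\tnorm{\bm e_h}$ equals exactly the consistency supremum. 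If the identity fails, the fallback is $h_T^{-1}\|Q_0(\nabla u-\PiCR\nabla u)\|_{0,T}\lesssim |u|_{2,T}$, which gives an $\iota|u|_2$ contribution. That contribution is still of the form $\iota|u|_2\lesssim\iota^{1/2}\|f\|_0$ by \eqref{eq:bib-regularity}, but it would spoil the first branch, so the exact vanishing is what we aim to establish.

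\emph{Interpolation error.} We have
\[
|(u,\nabla u)-\Pi_h^{\rm sp}u|_{\rm E}^2=\iota^2\|\nabla^2u-\nabla_h\PiCR\nabla u\|_0^2+\|\nabla u-\nabla_h\PiCR u\|_0^2,
\]
and by Lemma~\ref{lem:cr-commuting-general} both terms are $(I-Q_0)$-errors, namely of $\nabla^2u$ and of $\nabla u$. Lemma~\ref{lem:bih-fractional-means} with exponent $s$ applied to $\nabla^2u$, and with exponent $1$ applied to $\nabla u$, yields the first branch $\iota h^s|u|_{2+s}+h|u|_2$.

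For the second branch, we use $L^2$-stability of $I-Q_0$, so that $\iota\|(I-Q_0)\nabla^2u\|_0\le\iota\|u\|_2\lesssim\iota^{1/2}\|f\|_0$. For the gradient term we split $u=u_0+(u-u_0)$ and write
\[
\|(I-Q_0)\nabla u\|_0\le\|(I-Q_0)\nabla u_0\|_0+\|\nabla(u-u_0)\|_0\lesssim h\|u_0\|_2+\iota^{1/2}\|f\|_0\lesssim (h+\iota^{1/2})\|f\|_0,
\]
using \eqref{eq:Poisson-regularity} and \eqref{eq:bib-regularity}. Taking the minimum of the two bounds and adding the discrete error estimate gives \eqref{eq:bih-energy-low-reg}.

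The expected obstacle is the exact cancellation of the interpolant's stabilization term, since the $h_T^{-2}$ weight allows no slack; everything else is routine.
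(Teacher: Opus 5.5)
\textbf{There is a genuine gap in the stabilization term.} Your interpolation-error argument (both branches) is the paper's. So is your reduction of $\tnorm{\Pi_h^{\rm sp}u-\bm u_h}$ to the consistency supremum plus $s_h^{\rm sp}(\Pi_h^{\rm sp}u,\Pi_h^{\rm sp}u)^{1/2}$. But the identity you call the key point, $Q_0\PiCR\nabla u=Q_0\nabla u$, is false. On $T$, $Q_0\PiCR w$ is the barycenter value of $\PiCR w$, namely $\frac{1}{d+1}\sum_{F\subset\partial T}\frac{1}{|F|}\int_F w\,\mathrm ds$. This average of face means is a quadrature rule for $\frac{1}{|T|}\int_T w\,\mathrm dx$ that is exact on $\mathbb P_1(T)$, not on general $w$. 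On the reference triangle with $w=x_1^2$, the element mean is $1/6$, while the average of the three edge means is $2/9$. Taking $u=x_1^3/3$ locally, so that $\partial_1u=x_1^2$, shows the stabilization of the interpolant does not vanish in general. Your fallback $h_T^{-1}\|Q_0(\nabla u-\PiCR\nabla u)\|_{0,T}\lesssim|u|_{2,T}$ then only gives $\iota|u|_2$. As you note yourself, that does not fit the first branch. So the proposal as written does not establish $\iota h^s|u|_{2+s}+h|u|_2$ plus the consistency supremum.

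\textbf{The missing step.} Since $\PiCR$ reproduces affine functions, the local error functional $w\mapsto Q_{0,T}(w-\PiCR w)$ annihilates all of $\mathbb P_1(T)$, not just constants. With $\nabla u\in H^{1+s}$, $L^2$-stability of $Q_0$ and the Bramble--Hilbert lemma give
\[
\|Q_0(I-\PiCR)\nabla u\|_{0,T}\le\|(I-\PiCR)\nabla u\|_{0,T}\lesssim h_T^{1+s}|\nabla u|_{1+s,T}.
\]
This is the CR interpolation estimate the paper takes from Cai--He--Zhang. It yields $s_h^{\rm sp}(\Pi_h^{\rm sp}u,\Pi_h^{\rm sp}u)^{1/2}\lesssim\iota h^s|u|_{2+s}$. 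Your first-order bound supplies $\lesssim\iota|u|_2\lesssim\iota^{1/2}\norm{f}_0$. Together the stabilization contribution is $\lesssim\min\{\iota h^s|u|_{2+s},\iota^{1/2}\norm{f}_0\}$, which is absorbed into both branches of the minimum. This is how the paper concludes.
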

	
	\begin{proof}
		The commuting identities \eqref{eq:cr-commuting-general} imply
		\[|(u,\nabla u)-\Pi_h^{\rm sp} u|^2_{\rm E}=\iota^2\|(I-Q_0)\nabla^2 u\|^2_0+\|(I-Q_0)\nabla u\|^2_0.\] 
		For the fourth-order part, the local approximation property and the
		$L^2$-stability of $Q_0$ give
		\[
		\norm{(I-Q_0)\nabla^2u}_0
		\lesssim
		\min\left\{
		h^s|u|_{2+s},\,
		|u|_2
		\right\}.
		\]
		For the second-order part, the standard approximation estimate gives
		\[
		\norm{(I-Q_0)\nabla u}_0
		\lesssim
		h|u|_2.
		\]
		Alternatively, decomposing $u=(u-u_0)+u_0$, and using Lemma~\ref{lem:bih-fractional-means}, we obtain
		\begin{align*}
			\norm{(I-Q_0)\nabla u}_0
			&\le
			\norm{(I-Q_0)\nabla(u-u_0)}_0
			+
			\norm{(I-Q_0)\nabla u_0}_0
			\\
			&\lesssim
			|u-u_0|_1
			+h|u_0|_{2}.
		\end{align*}
		Combining \eqref{eq:bib-regularity}, \eqref{eq:Poisson-regularity} and these estimates yields
		\begin{align}
			\label{eq:perturbation-energy-interpolation}
			|(u,\nabla u)-\Pi_h^{\rm sp}u|_{\rm E}
			\lesssim{}
			\min\left\{
			\iota h^s
			|u|_{2+s}
			+h|u|_2,\,
			(\iota^{1/2}+h)\norm{f}_0
			\right\}.
		\end{align}
		
		From the discrete scheme \eqref{eq:bih-discrete}, we obtain
		\begin{equation*}
			\tnorm{\Pi_h^{\rm sp} u-\bm u_h}^2=a_h^{\rm sp}(\Pi_h^{\rm sp}u,\Pi_h^{\rm sp} u-\bm u_h)-(f,\Pi_{\rm CR} u- u_h) +s_h^{\rm sp}(\Pi_h^{\rm sp} u, \Pi_h^{\rm sp} u-\bm u_h)
		\end{equation*}
		Then we have 
		\begin{equation}
			\tnorm{\Pi_h^{\rm sp} u-\bm u_h}\le \sup_{0\ne\bm v_h\in V_{h0}^{\rm sp}}\frac{a_h^{\rm sp}(\Pi_h^{\rm sp} u,\bm v_h)-(f, v_h)}{\tnorm{\bm v_h}}
			+\sup_{0\ne\bm v_h\in V_{h0}^{\rm sp}}\frac{s_h^{\rm sp}(\Pi_h^{\rm sp} u, \bm v_h)}{\tnorm{\bm v_h}}
		\end{equation}
		Since $s_h^{\rm sp}$ is a symmetric positive‑semidefinite bilinear form,
		\begin{equation}
			|s_h^{\rm sp}(\Pi_h^{\rm sp} u, \bm v_h)|
			\le s_h^{\rm sp}(\Pi_h^{\rm sp} u, \Pi_h^{\rm sp} u)^{1/2}s_h^{\rm sp}(\bm v_h,\bm v_h)^{1/2}
			\le s_h^{\rm sp}(\Pi_h^{\rm sp} u, \Pi_h^{\rm sp} u)^{1/2}\tnorm{\bm v_h}
		\end{equation}
		Using commuting identity \eqref{eq:cr-commuting-general} and \cite[(3.2)]{cai2017Discontinuous} yields
		\begin{align}
			\label{eq:inter-stabilization}
			s_h^{\rm sp}(\Pi_h^{\rm sp} u, \Pi_h^{\rm sp} u)^{1/2}=&
			\left(\iota^2\sum_{T\in\Tcal_h}h_T^{-2}
			\norm{Q_0(\nabla u-\PiCR\nabla u)}_{0,T}^2\right)^{1/2}\notag\\
			\lesssim& \min\{h^s\iota|u|_{2+s},\iota^{1/2}\norm{f}_0\}.
		\end{align}
		Combining \eqref{eq:perturbation-energy-interpolation} and \eqref{eq:inter-stabilization} completes the proof.
	\end{proof}
	
	We next estimate the consistency term
    \[a_h^{\rm sp}(\Pi_h^{\rm sp}u,\bm v_h)-(f,v_h),\]
    assuming only \(u\in H^{2+s}(\Omega)\) and \(u_0\in H^{2}(\Omega)\), with \(0<s\le1\). For each \(\bm v_h\in V_{h0}^{\rm sp}\), we construct a Morley--Wang--Xu function, enrich it into a \(H^2(\Omega)\)-conforming finite element function, and prove the required approximation estimates.
	
	The Morley-Wang-Xu element \cite{Ming2006} is defined as follows: For any \(T\in \mathcal T_h\), the local shape function space is \(\mathbb P_2(T)\) and the nodal variables are given by
	\begin{equation}
		\label{eq:morley-dofs}
		\left\{
		\frac{1}{|E|}\int_E v\mathrm{d} s,\quad\frac{1}{|F|}\int_F \partial_{n_F} v\mathrm{d}s,\qquad \forall E\in \mathcal E_h,~F\in \mathcal F_h,~E,F\subset\partial T\right\}.
	\end{equation}
	When \(d=2\), \(\frac{1}{|E|}\int_E v\,\mathrm{d} s\) denotes the point value \(v(E)\). The global Morley-Wang-Xu element space is 
	\begin{equation}
		\label{eq:global-morley}
		M_h:=\{v\in \mathbb P_2(\mathcal T_h):\, \eqref{eq:morley-dofs}\text{ are single-valued}\}.
	\end{equation}
	And define
	\begin{equation}
		\label{eq:M0}
		M_{h0}:=\left\{
		v\in M_h:\, \frac{1}{|E|}\int_E v\mathrm{d}s=\frac{1}{|F|} \int_F\partial_{n_F} v\mathrm{d}s=0,\qquad\forall E\in\mathcal E^\partial_h,F\in \mathcal F^\partial_h\right\}.
	\end{equation}
	
	Combining this with the jump estimate below, we derive the approximation estimate for the Morley‑Wang‑Xu element.
	
	\begin{lemma}
		\label{lem:bih-strong-jump}
		For every \(\bm v_h=(v_h,\psi_h)\in V_{h0}^{\rm sp}\),
		\begin{equation}
			\label{eq:bih-strong-jump}
			\sum_{F\in\mathcal F_h}h_F^{-3}
			\|\jump{v_h}\|_{0,F}^2
			\lesssim
			\iota^{-2}\tnorm{\bm v_h}^2.
		\end{equation}
	\end{lemma}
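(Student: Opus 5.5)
The idea is to exploit the fact that a CR function is piecewise linear with continuous face means. Its jump across a face is therefore an affine function on $F$ with zero mean, and is completely determined by the jump of its tangential gradient. That tangential jump can be compared with the jump of $Q_0\psi_h$, and the latter is controlled by the $H^1$-type part of $\tnorm{\cdot}$.

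Fix $F\in\mathcal F_h$. For $F$ interior, $\jump{v_h}$ is affine on $F$ with $\int_F\jump{v_h}\,\mathrm ds=0$. By the same property on boundary faces (where $\jump{v_h}=v_h|_F$ has zero mean by the definition of $\mathrm{CR}_{h0}$), we get
\[
\|\jump{v_h}\|_{0,F}\lesssim h_F\,\|\nabla_F\jump{v_h}\|_{0,F}
= h_F\,\|\jump{(I-n_Fn_F^\intercal)\nabla_h v_h}\|_{0,F}.
\]
Since $\nabla_hv_h$ is piecewise constant, this gives
\[
h_F^{-3}\|\jump{v_h}\|_{0,F}^2\lesssim h_F^{-1}\|\jump{\nabla_hv_h}\|_{0,F}^2 .
\]
Next we insert $Q_0\psi_h$:
\[
\jump{\nabla_h v_h}=\jump{\nabla_hv_h-Q_0\psi_h}+\jump{Q_0\psi_h}.
\]
For the first term we use the inverse trace inequality for piecewise constants:
\[
h_F^{-1}\|\jump{\nabla_hv_h-Q_0\psi_h}\|_{0,F}^2\lesssim\sum_{T\subset\omega_F}h_T^{-2}\|\nabla_hv_h-Q_0\psi_h\|_{0,T}^2 .
\]
Summed over $F$, this is at most $\iota^{-2}s_h^{\rm sp}(\bm v_h,\bm v_h)$.

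The second term is the main step. On an interior face, $\jump{\psi_h}$ has zero mean on $F$, so
\[
\jump{Q_0\psi_h}=\jump{Q_0\psi_h-\psi_h}+\jump{\psi_h},
\]
and each piece is bounded by the elementwise quantity $\|\psi_h-Q_0\psi_h\|_{0,F}$ plus a zero-mean affine function on $F$. Both are controlled by $h_T^{1/2}\|\nabla_h\psi_h\|_{0,T}$ via the trace inequality and $\|\psi_h-Q_0\psi_h\|_{0,T}\lesssim h_T\|\nabla_h\psi_h\|_{0,T}$. For the zero-mean jump we use Poincaré on $F$ together with $\nabla_F\jump{\psi_h}=\jump{\nabla_F\psi_h}$. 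On boundary faces, $\psi_h$ has zero face mean, so $Q_0\psi_h|_F=(Q_0\psi_h-\psi_h)|_F+\psi_h|_F$ with $\psi_h|_F$ of zero mean, and the same argument applies. Hence
\[
h_F^{-1}\|\jump{Q_0\psi_h}\|_{0,F}^2\lesssim\sum_{T\subset\omega_F}\|\nabla_h\psi_h\|_{0,T}^2 ,
\]
whose sum over $F$ is at most $\iota^{-2}\cdot\iota^2\|\nabla_h\psi_h\|_0^2\le\iota^{-2}\tnorm{\bm v_h}^2$.

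Finally, we sum over faces using shape regularity, which gives finite overlap of the patches $\omega_F$. Adding the two contributions yields \eqref{eq:bih-strong-jump}.

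The step needing care is the tangential-gradient control: we need to make sure that only the tangential part of $\jump{\nabla_hv_h}$ enters the first estimate, and that the full jump, normal part included, is still bounded by the inserted terms. It is, since we bound the full vector jump. We also need to verify that the boundary-face convention $\jump{v}=v|_F$ is compatible with the zero-mean argument; it is, by the definitions of $\mathrm{CR}_{h0}$ and $V_{h0}^{\rm sp}$.
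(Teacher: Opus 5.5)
Your proof is correct and follows essentially the same route as the paper. Both use the face Poincar\'e inequality for the zero-mean jump $\jump{v_h}$, reduce to the jump of the piecewise constant $\nabla_h v_h$, bring in $\psi_h$ through $Q_0\psi_h$, and finish with inverse-trace and local projection estimates that give the stabilization and $\|\nabla_h\psi_h\|_0$ contributions. The one difference is that the paper removes $\jump{\psi_h}$ exactly by writing the constant $\jump{\nabla_h v_h}$ as its face mean $\frac{1}{|F|}\int_F\jump{\nabla_h v_h-\psi_h}\,\mathrm{d}s$, whereas you bound the zero-mean affine jump $\jump{\psi_h}$ with a second face Poincar\'e inequality, which is slightly longer but equally valid.
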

	
	\begin{proof}
		Since \(v_h\in\mathrm{CR}_{h,0}\), its jump has zero mean on all
		faces.
		The Poincar\'e inequality therefore gives
		\[
		h_F^{-3}\|\jump{v_h}\|_{0,F}^2
		\lesssim
		h_F^{-1}\|\nabla_F\jump{v_h}\|_{0,F}^2,
		\]
		where \(\nabla_F=(I-n_F\otimes n_F)\nabla\).
		
		Since \(\nabla_h v_h\) is elementwise constant and
		\(\int_F \jump{\psi_h}\mathrm{d}s=0\),
		\[
		\nabla_F\jump{v_h}
		=
		\frac1{|F|}(I-n_F\otimes n_F)
		\int_F\jump{\nabla_h v_h-\psi_h}\,ds .
		\]
		Hence, Cauchy's inequality and the inverse trace inequality give
		\begin{align*}
			h_F^{-3}\|\jump{v_h}\|_{0,F}^2
			\lesssim
			h_F^{-1}\|\jump{\nabla_h v_h-\psi_h}\|_{0,F}^2
			\lesssim
			\sum_{T\subset\omega_F}
			h_T^{-2}
			\|\nabla_h v_h-\psi_h\|_{0,T}^2,
		\end{align*}
		where \(\omega_F\) is the union of the elements sharing the face \(F\).
		
		Finally,
		\[
		\nabla_h v_h-\psi_h
		=
		(\nabla_h v_h-Q_0\psi_h)
		+
		(Q_0\psi_h-\psi_h),
		\]
		and the local projection estimate yields
		\[
		h_F^{-3}\|\jump{v_h}\|_{0,F}^2
		\lesssim
		\sum_{T\subset\omega_F}
		\left(
		h_T^{-2}\|\nabla_h v_h-Q_0\psi_h\|_{0,T}^2
		+\|\nabla_h\psi_h\|_{0,T}^2
		\right).
		\]
		Summing over all faces and using the finite overlap of the face patches
		completes the proof.
	\end{proof}
	
	\begin{lemma}
		\label{lem:bih-morley-companion}
		Let \(d=2,3\). For every \(\bm v_h= (v_h,\psi_h)\in V_{h0}^{\rm sp}\), there exists \(m_v\in M_{h0}\) such that 
		\begin{align}
			\frac{1}{|E|}
			\int_E m_v\mathrm{d}s
			&=
			\frac1{\#\omega_E}
			\sum_{T\in\omega_E}
			\frac{1}{|E|}
			\int_E v_h|_T\mathrm{d}s,
			& \forall E\in\mathcal E_h^i,
			\label{eq:bih-morley-codimtwo-dof}\\
			\frac{1}{|F|}\int_F \partial_{n_F} m_v\mathrm{d}s
			&=
			\frac{1}{|F|}\int_F \psi_h\cdot n_F\mathrm{d}s,
			&\forall F\in\mathcal F_h^i,
			\label{eq:bih-morley-face-dof}
		\end{align}
		where \(\omega_E\) is the element patch consisting of all \(T\in \mathcal T_h\)
		that contain \(E\), and \(\#\omega_E\) denotes the number of elements
		in this patch. Then, we have 
		\begin{equation}
			\label{eq:lifting-estimate}
			\sum_{T\in \mathcal T_h }\left(h^{-4}_T\|m_v-v_h\|_{0,T}^2+h^{-2}_T\|\nabla_hm_v-\psi_h\|^2_{0,T}+\|\nabla^2_hm_v\|^2_{0,T}\right)
			\lesssim\iota^{-2}\tnorm{\bm v_h}^2.
		\end{equation}
	\end{lemma}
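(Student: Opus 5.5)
The Morley--Wang--Xu function \(m_v\) is well defined: all degrees of freedom are prescribed, boundary ones set to zero, and \eqref{eq:bih-morley-codimtwo-dof}--\eqref{eq:bih-morley-face-dof} fix the interior ones. The proof is then a local comparison on each \(T\). On \(T\) we introduce an auxiliary quadratic \(w_T\in\mathbb P_2(T)\) whose Morley--Wang--Xu degrees of freedom are the \emph{local} data of \(\bm v_h\): the means of \(v_h|_T\) on the \((d-2)\)-subsimplices and \(\frac{1}{|F|}\int_F\psi_h|_T\cdot n_F\,\mathrm ds\) on the faces of \(T\). Since \(\mathbb P_1(T)\) is reproduced by the Morley--Wang--Xu element, we have \(w_T-v_h|_T=\mathcal I_T(0,\psi_h-\nabla v_h)\), where \(\mathcal I_T\) denotes the local interpolant. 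The face data are \(\frac1{|F|}\int_F(\psi_h-\nabla v_h)\cdot n_F\,\mathrm ds\). A scaling argument on the reference element, using that the basis functions dual to normal-derivative degrees of freedom scale like \(h_T\), gives
\[
h_T^{-4}\|w_T-v_h\|_{0,T}^2+h_T^{-2}\|\nabla(w_T-v_h)\|_{0,T}^2+\|\nabla^2 w_T\|_{0,T}^2\lesssim h_T^{-2}\|\psi_h-\nabla v_h\|_{0,T}^2 .
\]
Since \(\nabla v_h\) is constant on \(T\), the right-hand side is bounded by \(h_T^{-2}\|\nabla_h v_h-Q_0\psi_h\|_{0,T}^2+\|\nabla_h\psi_h\|_{0,T}^2\), exactly as in the proof of Lemma~\ref{lem:bih-strong-jump}. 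This is \(\lesssim\iota^{-2}\) times the local contribution to \(\tnorm{\bm v_h}^2\). The term \(h_T^{-2}\|\nabla w_T-\psi_h\|_{0,T}^2\) is controlled by adding and subtracting \(\nabla v_h\).

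It remains to bound \(m_v-w_T\) on \(T\), which is again a Morley--Wang--Xu function whose degrees of freedom are differences. For interior faces \(F\subset\partial T\), the normal-derivative degrees of freedom coincide, because \(\int_F\psi_h\) is single-valued. For boundary faces both vanish, since \(\int_F\psi_h=0\). Hence only the subsimplex degrees of freedom differ. For \(E\in\mathcal E_h^i\) the difference is an average over \(T'\in\omega_E\) of \(\frac1{|E|}\int_E(v_h|_{T'}-v_h|_T)\). This is bounded by a chain of face jumps across faces in \(\omega_E\) containing \(E\), and each term by \(h_F^{-(d-1)/2}\|\jump{v_h}\|_{0,F}\cdot h_F^{-(d-2)/2}\)-type inverse estimates on the affine jump. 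For \(E\in\mathcal E_h^\partial\), the value \(\frac1{|E|}\int_E v_h|_T\) is controlled by the jump (the trace itself) on a boundary face containing \(E\). Scaling then yields
\[
h_T^{-4}\|m_v-w_T\|_{0,T}^2+h_T^{-2}\|\nabla(m_v-w_T)\|_{0,T}^2+\|\nabla^2(m_v-w_T)\|_{0,T}^2\lesssim \sum_{E\subset T}h_T^{d-4}\Bigl|\tfrac1{|E|}\textstyle\int_E(m_v-w_T)\Bigr|^2\lesssim\sum_{F\cap T\neq\emptyset}h_F^{-3}\|\jump{v_h}\|_{0,F}^2 .
\]
Summing over \(T\), using shape regularity (bounded patch overlap) and Lemma~\ref{lem:bih-strong-jump} gives \(\lesssim\iota^{-2}\tnorm{\bm v_h}^2\). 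Combining with the previous step via the triangle inequality proves \eqref{eq:lifting-estimate}.

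The main obstacle is the second step: getting the correct powers of \(h\) when converting subsimplex-mean differences into face \(L^2\) jumps. This requires that \(\jump{v_h}\) is affine on \(F\), so its mean over \(E\subset\partial F\) is bounded by \(|F|^{-1/2}\|\jump{v_h}\|_{0,F}\) up to constants. It also requires that the scaling of \(\nabla^2\) of the vertex/edge basis functions be \(h_T^{-2}\). I would verify these on the reference configuration and then map affinely. In \(d=2\) the points \(E\) are vertices and the chain argument is around the vertex patch.
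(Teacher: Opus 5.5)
Your proposal is correct and follows essentially the paper's argument. The paper applies the scaled Morley--Wang--Xu norm equivalence to $m_v-v_h|_T$ in one step, bounding the subsimplex-mean differences by face jumps (and then invoking Lemma~\ref{lem:bih-strong-jump}) and the normal-derivative differences by $h_T^{-2}\|\nabla_h v_h-Q_0\psi_h\|_{0,T}^2+\|\nabla_h\psi_h\|_{0,T}^2$; your auxiliary $w_T$ only separates these two groups of degrees of freedom into two local interpolants, and for a boundary $E$ at which $T$ has no boundary face you simply need the same chain of interior jumps through $\omega_E$ that you already use in the interior case.
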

	
	\begin{proof}
		By the scaled norm equivalence for the
		Morley degrees of freedom,
		\begin{align}
			\sum_{T\in \mathcal T_h}h^{-4}_T\|m_v-v_h\|^2_{0,T}
			\lesssim
			\sum_{T\in \mathcal T_h}
			\sum_{E\in \mathcal E_h\cap \partial T}h^{d-4}_T
			\left|\frac{1}{|E|}\int_E m_v-v_h|_T\mathrm{d}s\right|^2\notag\\
			+\sum_{T\in \mathcal T_h}
			\sum_{F\in\mathcal F_h\cap\partial T}
			h^{d-2}_T\left|
			\frac{1}{|F|}\int_F (\psi_h-\nabla_hv_h|_T)\cdot n_F\mathrm{d}s\right|^2.
		\end{align}
		
		By the definition of the degree of freedom \eqref{eq:bih-morley-codimtwo-dof}, the difference between \(\frac{1}{|E|}\int_E m_v\mathrm{d}s\) and any element trace \(\frac{1}{|E|}\int_E v_h|_T\mathrm {d}s\) is a finite sum of jumps of \(v_h\) across faces containing \(E\). When \(d=3\),
		\begin{align*}
			\left|\frac{1}{|E|}\int_E m_v-v_h|_T\mathrm{d}s\right|^2
			\lesssim &\sum_{F\in \mathcal F_h, E\subset F}
			\left|\frac{1}{|E|}\int_E \jump{v_h}\mathrm{d}s\right|^2\\
			\lesssim&\sum_{F\in \mathcal F_h, E\subset F}
			h^{-1}_F\|\jump{v_h}\|^2_{0,E}
			\lesssim 
			\sum_{F\in \mathcal F_h, E\subset F}
			h^{-2}_F
			\|\jump{v_h}\|^2_{0,F}.
		\end{align*}
		When \(d=2\),
		\begin{align*}
			\left|\frac{1}{|E|}\int_E m_v-v_h|_T\mathrm{d}s\right|^2
			\lesssim &\sum_{F\in \mathcal F_h, E\subset F}
			\left|\jump{v_h(E)}\right|^2
			\lesssim 
			\sum_{F\in \mathcal F_h, E\subset F}
			h^{-1}_F
			\|\jump{v_h}\|^2_{0,F}.
		\end{align*}
		Hence, by shape regularity and the inverse estimate on each face,
		\begin{equation*}
			\sum_{T\in\mathcal T_h}
			\sum_{E\in\mathcal E_h\cap \partial T}
			h_T^{d-4}
			\left|\frac{1}{|E|}\int_E m_v-v_h|_T\mathrm{d}s\right|^2
			\lesssim
			\sum_{F\in\mathcal F_h}
			h_F^{-3}\|\jump{v_h}\|_{0,F}^2.
		\end{equation*}
		For the face degree of freedom, the inverse trace inequality and local projection estimate imply
		\begin{equation*}
			h^{d-2}_T\left|
			\frac{1}{|F|}\int_F (\psi_h-\nabla_hv_h|_T)\cdot n_F\mathrm{d}s\right|^2
			\lesssim h^{-2}_T\|\nabla_h v_h-Q_0\psi_h\|^2_{0,T}+\|\nabla_h\psi_h\|^2_{0,T}.
		\end{equation*}
		Combining this estimate Lemma \ref{lem:bih-strong-jump} yields
		\begin{equation}
			\sum_{T\in\mathcal T_h}
			h_T^{-4}
			\|m_v-v_h\|_{0,T}^2
			\lesssim
			\iota^{-2}\tnorm{\bm v_h}^2.
			\label{eq:bih-morley-difference-bound}
		\end{equation}
		
		Since $\bm v_h$ is piecewise‑linear, it follows from the inverse inequality that
		\[
		\|\nabla_h^2m_v\|_0^2
		=\|\nabla_h^2(m_v-v_h)\|_0^2
		\lesssim\sum_{T\in\mathcal T_h}
		h_T^{-4}
		\|m_v-v_h\|_{0,T}^2.
		\]
		Moreover,
		\[
		\nabla_h m_v-\psi_h
		=
		\nabla_h(m_v-v_h)
		+
		(\nabla_h v_h-Q_0\psi_h)
		+
		(Q_0\psi_h-\psi_h),
		\]
		so that
		\[
		\sum_{T\in\mathcal T_h}
		h_T^{-2}
		\|\nabla_h m_v-\psi_h\|_{0,T}^2
		\lesssim
		\iota^{-2}\tnorm{\bm v_h}^2.
		\]
	\end{proof}
	
	\begin{remark}
		For \(d=2,3\), Wang et al.~\cite{wang2018morley}
		constructed an enriching operator
		\[
		E_h:M_{h0}
		\rightarrow
		A_{h0}\subset H_0^2(\Omega),
		\]
		where \(A_{h0}\subset H_0^2(\Omega)\) denotes the conforming
		finite element space built from the quintic Argyris triangle
		\cite{argyris1968tuba} when \(d=2\), and from the degree-ten member
		of Zhang's family of \(C^1\) tetrahedral elements
		\cite{zhang2009family} when \(d=3\). For these choices of \(A_{h0}\), the degree-of-freedom averaging
		construction of the enriching operator and the corresponding local
		estimates are given in
		\cite{wang2018morley}.
		Together with the standard inverse inequalities, these estimates imply
		\begin{equation}
			\label{eq:bih-morley-companion-bound}
			\sum_{T\in\Tcal_h}\sum_{j=0}^2
			h_T^{2j-4}
			\norm{\nabla_h^j(m_h-E_hm_h)}_{0,T}^2
			\lesssim
			\norm{\nabla_h^2m_h}_0^2,\qquad \forall m_h\in M_{h0}.
		\end{equation}
		\label{rem:bih-morley-companion-approx}
		
		The conforming companion \(E_h m_v\) provides a direct approximation
		of the original discrete pair \((v_h,\psi_h)\).
		Indeed, by the triangle inequality together with
		Lemma~\ref{lem:bih-morley-companion} and
		\eqref{eq:bih-morley-companion-bound},
		\begin{align}
			\sum_{T\in\Tcal_h}\Big(
			&h_T^{-4}\norm{v_h-E_h m_v}_{0,T}^2
			+h_T^{-2}\norm{\psi_h-\nabla E_h m_v}_{0,T}^2
			+\norm{\nabla^2E_h m_v}_{0,T}^2
			\Big)
			\lesssim
			\iota^{-2}\tnorm{\bm v_h}^2.
			\label{eq:bih-morley-companion-direct-approx}
		\end{align}
	\end{remark}
	
	To get rid of the factor $\iota^{-2}$ in the estimate, we introduce an average interpolation operator \(\Pi_{\rm A}:\,\rm{CR}_{h0}\to A_{h0}\). For each \(v_h\in\mathrm{CR}_{h0}\), define
\(\Pi_{\rm A}v_h\in A_{h0}\) by
\begin{equation}
    \label{eq:cr-everage-operator}
    \mathcal N(\Pi_{\rm A}v_h)
    =\frac{1}{\#\omega_{\mathcal N}}
    \sum_{T\in\omega_{\mathcal N}}\mathcal N(v_h|_T)
\end{equation}
for every free degree of freedom \(\mathcal N\) of \(A_{h0}\),
where \(\omega_{\mathcal N}\) denotes the set of elements
sharing \(\mathcal N\). We use the same notation \(\Pi_{\rm A}\) for its componentwise extension to vector-valued functions. By the standard averaging result for the CR element
	\cite[Lemma~5.6.4]{ShiWang2010}, we have 
	\begin{equation}
		\label{eq:H1-consistency-estimate}
		\left(\sum_{T\in \mathcal T_h}
		h^{-2}_T\norm{v_h-\Pi_{\rm A}v_h}^2_{0,T}\right)^{1/2}
		+\norm{\nabla_h(v_h-\Pi_{\rm A}v_h)}_0
		\lesssim
		\norm{\nabla_h v_h}_0.
	\end{equation}
	
	\begin{lemma}
		\label{lem:bih-consistency-big-e}
		Let \(u\in H^{2+s}(\Omega)\cap H_0^2(\Omega)\) solve
		\eqref{eq:bih-weak}, \(0<s\le 1\). Then
		\begin{equation}
			\label{eq:bih-consistency-estimate-big-e}
            \sup_{0\ne \bm v_h\in V^{\rm sp}_{h0}}
            \frac{\left|a_h^{\rm sp}(\Pi_h^{\rm sp}u,\bm v_h)-(f,v_h)\right|}{\tnorm{\bm v_h}}
			\lesssim
			\iota h^s|u|_{2+s}
			+\iota^{-1} h^2|u|_2
			+\iota^{-1}h^2\norm{f}_0.
		\end{equation}
	\end{lemma}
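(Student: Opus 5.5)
The plan is to compare the discrete form with the continuous variational equation \eqref{eq:bih-weak} tested by a conforming companion of \(\bm v_h\). Fix \(\bm v_h=(v_h,\psi_h)\in V_{h0}^{\rm sp}\). Let \(m_v\in M_{h0}\) be the Morley--Wang--Xu function of Lemma~\ref{lem:bih-morley-companion}, and put \(w:=E_hm_v\in A_{h0}\subset H_0^2(\Omega)\).

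\textbf{Step 1 (splitting the consistency error).} Since \(\nabla_h v_h\) and \(\nabla_h\psi_h\) are piecewise constant, the commuting identities \eqref{eq:cr-commuting-general} give
\[
a_h^{\rm sp}(\Pi_h^{\rm sp}u,\bm v_h)=\iota^2(\nabla^2u,\nabla_h\psi_h)+(\nabla u,\nabla_hv_h).
\]
Testing \eqref{eq:bih-weak} with \(w\) and subtracting, I get
\[
a_h^{\rm sp}(\Pi_h^{\rm sp}u,\bm v_h)-(f,v_h)=\underbrace{\iota^2(\nabla^2u,\nabla_h\psi_h-\nabla^2w)}_{I_1}+\underbrace{(\nabla u,\nabla_h(v_h-w))}_{I_2}+\underbrace{(f,w-v_h)}_{I_3}.
\]
The term \(I_3\) is immediate: by \eqref{eq:bih-morley-companion-direct-approx}, \(\|w-v_h\|_0\lesssim h^2\iota^{-1}\tnorm{\bm v_h}\), so \(|I_3|\lesssim \iota^{-1}h^2\|f\|_0\tnorm{\bm v_h}\).

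\textbf{Step 2 (the fourth-order term \(I_1\)).} I split \(\nabla^2u=(I-Q_0)\nabla^2u+Q_0\nabla^2u\).

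For the first part, \(\iota\|\nabla_h\psi_h\|_0\le\tnorm{\bm v_h}\) by definition, and \(\iota\|\nabla^2w\|_0\lesssim\tnorm{\bm v_h}\) by \eqref{eq:bih-morley-companion-direct-approx}. Together with \(\|(I-Q_0)\nabla^2u\|_0\lesssim h^s|u|_{2+s}\) this gives a contribution \(\lesssim\iota h^s|u|_{2+s}\tnorm{\bm v_h}\).

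For the part with \(Q_0\nabla^2u\), which is piecewise constant, integrate by parts on each element:
\[
(Q_0\nabla^2u,\nabla_h\psi_h-\nabla^2 w)_T=\langle (Q_0\nabla^2u)n,\psi_h-\nabla w\rangle_{\partial T}.
\]
Summing over elements, I rewrite the face terms using jumps and averages:
\begin{itemize}[label={}]
\item \(\nabla w\) is continuous and vanishes on \(\partial\Omega\).
\item \(\jump{\psi_h}\) has zero mean on every interior face, and \(\psi_h\) has zero mean on boundary faces, so the terms pairing the constant average \(\{Q_0\nabla^2u\}n\) with \(\jump{\psi_h}\) vanish, as do all boundary-face contributions.
\end{itemize}
What remains is \(\sum_{F\in\mathcal F_h^i}\langle\jump{Q_0\nabla^2u}n_F,\{\psi_h-\nabla w\}\rangle_F\). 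By Cauchy--Schwarz, Lemma~\ref{lem:bih-fractional-means} with \(v=\nabla^2u\), and a trace inequality combined with \eqref{eq:bih-morley-companion-direct-approx}, which gives \(\sum_F h_F^{-1}\|\psi_h-\nabla w\|_{0,F}^2\lesssim\iota^{-2}\tnorm{\bm v_h}^2\), this contribution is bounded by \(\iota^2\cdot h^s|u|_{2+s}\cdot\iota^{-1}\tnorm{\bm v_h}\).

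\textbf{Step 3 (the second-order term \(I_2\)), which I expect to be the main obstacle.} The difficulty is to extract \(h^2\) rather than \(h\). Integrating by parts elementwise and using \(\jump{w}=0\) gives
\[
I_2=-(\Delta u,v_h-w)+\sum_{F\in\mathcal F_h}\langle\partial_{n_F}u,\jump{v_h}\rangle_F.
\]
The volume term is \(\lesssim|u|_2\,h^2\iota^{-1}\tnorm{\bm v_h}\) by \eqref{eq:bih-morley-companion-direct-approx}.

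For the face term, \(\jump{v_h}\) has zero mean on every face, so I may subtract the face mean of \(\partial_{n_F}u\). A scaled trace estimate gives \(\|\partial_{n_F}u-\overline{\partial_{n_F}u}\|_{0,F}\lesssim h_F^{1/2}|u|_{2,\omega_F}\). Pairing this with \(\|\jump{v_h}\|_{0,F}=h_F^{3/2}\cdot h_F^{-3/2}\|\jump{v_h}\|_{0,F}\) and invoking Lemma~\ref{lem:bih-strong-jump} yields \(\lesssim h^2|u|_2\,\iota^{-1}\tnorm{\bm v_h}\).

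\textbf{Conclusion.} Collecting the bounds from Steps 1--3 and dividing by \(\tnorm{\bm v_h}\) gives \eqref{eq:bih-consistency-estimate-big-e}. The delicate points are the cancellations in Steps 2 and 3: both rely on the zero-mean property of CR jumps (and of boundary-face traces) against piecewise-constant data. They also rely on the strong jump control from Lemma~\ref{lem:bih-strong-jump}, which is exactly what produces the \(\iota^{-1}h^2\) factors.
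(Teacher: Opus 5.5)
Your argument is correct. Steps 1 and 2, including the bound on $I_3$, coincide with the paper's proof. The two routes differ only in the second-order term $I_2=(\nabla u,\nabla_h(v_h-E_hm_v))$.

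\textbf{The paper's route.} It handles $I_2$ by the same mechanism as $I_1$, inserting $Q_0\nabla u$.
\begin{itemize}
\item It bounds $((I-Q_0)\nabla u,\nabla_h(v_h-E_hm_v))$ by $h|u|_2\,\|\nabla_h(v_h-E_hm_v)\|_0$, using $\|\nabla_h(v_h-E_hm_v)\|_0\lesssim h\iota^{-1}\tnorm{\bm v_h}$. This gradient bound is not listed explicitly in \eqref{eq:bih-morley-companion-direct-approx}. It needs an elementwise inverse inequality, or a splitting through $\psi_h$.
\item It integrates the piecewise-constant part by parts, obtaining $\jump{Q_0\nabla u}n_F$ paired with $\{v_h-E_hm_v\}$. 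This is closed by Lemma~\ref{lem:bih-fractional-means} and a trace inequality.
\end{itemize}

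\textbf{Your route.} You integrate by parts against the exact $\nabla u$.
\begin{itemize}
\item The companion then enters only through $\|v_h-E_hm_v\|_0\lesssim h^2\iota^{-1}\tnorm{\bm v_h}$, in the volume term.
\item The face term involves only the CR jumps. It is closed by their zero face means together with Lemma~\ref{lem:bih-strong-jump}.
\end{itemize}

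\textbf{Comparison.} Your version is slightly more direct, and it shows explicitly that the $\iota^{-1}h^2$ factor comes from the strong jump control. The paper's version treats $I_1$ and $I_2$ uniformly and relies only on the packaged companion estimate. It is also the same $Q_0$-insertion device the paper reuses later for the elasticity consistency terms.
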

	
	\begin{proof}
		Let \(m_v\) be given by
		Lemma~\ref{lem:bih-morley-companion}. Since
		\(E_h m_v\in H_0^2(\Omega)\), the weak formulation gives
		\[
		\iota^2(\nabla^2u,\nabla^2E_h m_v)
		+(\nabla u,\nabla E_h m_v)
		=(f,E_h m_v).
		\]
		Hence, by the commuting property \eqref{eq:cr-commuting-general} and \eqref{eq:bih-weak},
		\begin{align}
			a_h^{\rm sp}(\Pi_h^{\rm sp}u,\bm v_h)-(f,v_h)
			={}&
			\iota^2
			(\nabla^2u,\nabla_h\psi_h-\nabla^2E_h m_v)
			+(\nabla u,\nabla_h(v_h-E_h m_v))
			\notag\\
			&
			-(f,v_h-E_h m_v).
			\label{eq:bih-morley-consistency-split}
		\end{align}
		
		For the first term in \eqref{eq:bih-morley-consistency-split}, inserting the elementwise
		\(L^2\)-projection gives
		\begin{align}
			(\nabla^2u,\nabla_h\psi_h-\nabla^2E_h m_v)
			={}&
			(\nabla^2u-Q_0 \nabla^2u,
			\nabla_h\psi_h-\nabla^2E_h m_v)
			\notag\\
			&+
			(Q_0 \nabla^2u,
			\nabla_h\psi_h-\nabla^2E_h m_v).
			\label{eq:bih-consistency-Q0-split}
		\end{align}
		By Lemma~\ref{lem:bih-fractional-means} and
		\eqref{eq:bih-morley-companion-direct-approx},
		\begin{align}
			\left|
			(\nabla^2u-Q_0 \nabla^2u,
			\nabla_h\psi_h-\nabla^2E_h m_v)
			\right|
			\lesssim
			h^s|u|_{2+s}
			\iota^{-1}\tnorm{\bm v_h}.
			\label{eq:bih-consistency-Q0-error}
		\end{align}
		For the second term, since \(Q_0 \nabla^2u\) is elementwise
		constant, elementwise integration by parts gives
		\begin{align*}
			(Q_0 \nabla^2u,
			\nabla_h\psi_h-\nabla^2E_h m_v)
			=
			\sum_{F\in\Fcal_h^i}
			\left(
			\jump{Q_0 \nabla^2u}n_F,
			\{\psi_h-\nabla E_h m_v\}_F
			\right)_F.
		\end{align*}
		Indeed, the term involving the average of \(Q_0 \nabla^2u\)
		vanishes because
		\(\int_F\jump{\psi_h-\nabla E_h m_v}\,ds=0
		\).
		By Cauchy's inequality and Lemma~\ref{lem:bih-fractional-means},
		\begin{align*}
			\left|
			(Q_0 \nabla^2u,
			\nabla_h\psi_h-\nabla^2E_h m_v)
			\right|\lesssim
			h^s\seminorm{u}_{2+s}
			\left(
			\sum_{F\in\Fcal_h^i}
			h_F^{-1}
			\norm{\{\psi_h-\nabla E_h m_v\}_F}_{0,F}^2
			\right)^{1/2}.
		\end{align*}
		The trace inequality and
		\eqref{eq:bih-morley-companion-direct-approx} further imply
		\[
		\left(
		\sum_{F\in\Fcal_h^i}
		h_F^{-1}
		\norm{\{\psi_h-\nabla E_h m_v\}_F}_{0,F}^2
		\right)^{1/2}
		\lesssim
		\iota^{-1}\tnorm{\bm v_h}.
		\]
		Hence
		\begin{equation}
			\left|
			(Q_0 \nabla^2u,
			\nabla_h\psi_h-\nabla^2E_h m_v)
			\right|
			\lesssim
			\iota^{-1}h^s\seminorm{u}_{2+s}
			\tnorm{\bm v_h}.
		\end{equation}
		
		For the second term in \eqref{eq:bih-morley-consistency-split}, applying \eqref{eq:bih-morley-companion-direct-approx} and arguing as in the proof of \eqref{eq:bih-consistency-Q0-split}, we obtain
		\begin{equation}
			\label{eq:bih-consistency-lower-gradient}
			|(\nabla u,\nabla_h(v_h-E_h m_v))|
			\lesssim 
			\iota^{-1} h^2|u|_2\tnorm{\bm v_h},
		\end{equation}
		moreover
		\begin{equation}
			\label{eq:bih-consistency-lower-load}
			|(f,v_h-E_hm_v)|\lesssim \iota^{-1}h^2 \|f\|_0\tnorm{\bm v_h}.
		\end{equation}
		Finally, combining \eqref{eq:bih-morley-consistency-split}-\eqref{eq:bih-consistency-lower-load}, we obtain \eqref{eq:bih-consistency-estimate-big-e}.
	\end{proof}
	
	\begin{lemma}
		\label{lem:bih-consistency-small-e}
		Let \(u\in H_0^2(\Omega;\mathbb R^d)\) solve \eqref{eq:bih-weak} and \(u_0\in H^{2}(\Omega;\mathbb R^d)\cap H^1_0(\Omega;\mathbb R^d)\) solve \eqref{eq:Poisson-model}. Then
		\begin{equation}
			\label{eq:bih-consistency-estimate-small-e}
            \sup_{0\ne \bm v_h\in V^{\rm sp}_{h0}}\frac{\left|a_h^{\rm sp}(\Pi_h^{\rm sp}u,\bm v_h)-(f,v_h)\right|}{\tnorm{\bm v_h}}
			\lesssim
			\iota|u|_2
			+|u-u_0|_1
			+h|u_0|_{2}
			+h\norm{f}_0.
		\end{equation}
	\end{lemma}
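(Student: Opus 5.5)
We bound the consistency functional without the $\iota^{-1}$ losses of Lemma~\ref{lem:bih-consistency-big-e}. The idea is to test the weak form only against conforming functions built from $v_h$ by the averaging operator $\Pi_{\rm A}$, which is controlled by $\norm{\nabla_h v_h}_0\lesssim\tnorm{\bm v_h}$ alone. Fix $\bm v_h=(v_h,\psi_h)\in V_{h0}^{\rm sp}$. By the commuting property \eqref{eq:cr-commuting-general},
\[
a_h^{\rm sp}(\Pi_h^{\rm sp}u,\bm v_h)=\iota^2(Q_0\nabla^2u,\nabla_h\psi_h)+(Q_0\nabla u,\nabla_hv_h)=\iota^2(\nabla^2u,\nabla_h\psi_h)+(\nabla u,\nabla_hv_h),
\]
since $\nabla_h\psi_h$ and $\nabla_h v_h$ are piecewise constant. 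The first term is bounded directly:
\[
\iota^2|(\nabla^2u,\nabla_h\psi_h)|\le\iota|u|_2\,\iota\norm{\nabla_h\psi_h}_0\le\iota|u|_2\tnorm{\bm v_h}.
\]
It therefore remains to estimate $(\nabla u,\nabla_hv_h)-(f,v_h)$ in terms of $|u-u_0|_1+h|u_0|_2+h\norm f_0$.

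Split $\nabla u=\nabla(u-u_0)+\nabla u_0$. The piece $(\nabla(u-u_0),\nabla_hv_h)$ is bounded by $|u-u_0|_1\norm{\nabla_hv_h}_0\lesssim|u-u_0|_1\tnorm{\bm v_h}$ by \eqref{eq:bih-H1-control}. We are left with $(\nabla u_0,\nabla_hv_h)-(f,v_h)$, which is exactly the consistency error of the CR method for the Poisson problem \eqref{eq:Poisson-model}. Elementwise integration by parts with $-\Delta u_0=f$ gives
\[
(\nabla u_0,\nabla_hv_h)-(f,v_h)=\sum_{F\in\Fcal_h}\langle\nabla u_0\cdot n_F,\jump{v_h}\rangle_F .
\]
Since $\jump{v_h}$ has zero mean on every face, including boundary faces by the definition of $\mathrm{CR}_{h0}$, we may subtract $Q_0\nabla u_0$ restricted to either adjacent element. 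A trace inequality and a Poincaré inequality on the face then yield the classical bound $\lesssim h|u_0|_2\norm{\nabla_hv_h}_0$.

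One could alternatively insert $\Pi_{\rm A}v_h\in A_{h0}\subset H^1_0$ and use \eqref{eq:H1-consistency-estimate}:
\[
(\nabla u_0,\nabla_h(v_h-\Pi_{\rm A}v_h))-(f,v_h-\Pi_{\rm A}v_h).
\]
The load part is bounded by $h\norm f_0\norm{\nabla_hv_h}_0$ via the $h_T^{-2}$-weighted estimate in \eqref{eq:H1-consistency-estimate}, which explains the $h\norm f_0$ term in \eqref{eq:bih-consistency-estimate-small-e}. The gradient part again needs the face-jump argument with $Q_0\nabla u_0$ inserted, using that $v_h-\Pi_{\rm A}v_h$ has jumps with zero face means.

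I expect the only delicate point to be this face-term estimate for $u_0\in H^2$. It must be handled through the mean-zero jump property, because the direct bound $\norm{\nabla_h(v_h-\Pi_{\rm A}v_h)}_0$ gives no factor of $h$. Everything else is Cauchy--Schwarz combined with \eqref{eq:bih-H1-control}. Summing the three contributions gives \eqref{eq:bih-consistency-estimate-small-e}.
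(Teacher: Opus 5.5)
Your proof is correct. Its skeleton is the paper's: insert $u_0$, bound $\iota^2(\nabla^2u,\nabla_h\psi_h)$ by $\iota|u|_2\tnorm{\bm v_h}$, and bound $(\nabla(u-u_0),\nabla_hv_h)$ by $|u-u_0|_1\tnorm{\bm v_h}$ via \eqref{eq:bih-H1-control}.

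The routes differ only in the remaining Poisson consistency term. The paper never uses the strong equation. It inserts the conforming companion $\Pi_{\rm A}v_h\in A_{h0}\subset H^1_0(\Omega)$ and uses the weak form of \eqref{eq:Poisson-model} to get $(\nabla u_0,\nabla_h(v_h-\Pi_{\rm A}v_h))-(f,v_h-\Pi_{\rm A}v_h)$. The gradient part is then handled by inserting $Q_0\nabla u_0$, using Lemma~\ref{lem:bih-fractional-means}, and using the mean-zero face jumps of $v_h-\Pi_{\rm A}v_h$. The load part uses the $h_T^{-2}$-weighted estimate in \eqref{eq:H1-consistency-estimate}. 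This is exactly your ``alternative'' paragraph.

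Your primary route is the classical Strang-type CR consistency estimate: elementwise integration by parts with $-\Delta u_0=f$, plus the mean-zero jumps of $v_h$ itself. It buys three things:
\begin{itemize}
\item it is more elementary;
\item it needs no averaging operator at all (the paper's $\Pi_{\rm A}$ maps into the Argyris/Zhang $C^1$ space, far more than $H^1$-conformity requires);
\item it yields $h|u_0|_2\norm{\nabla_h v_h}_0$ with no separate load term. This shows the $h\norm{f}_0$ contribution in \eqref{eq:bih-consistency-estimate-small-e} is redundant anyway, since $\norm{f}_0=\norm{\Delta u_0}_0\le\sqrt d\,|u_0|_2$.
\end{itemize}
The paper's version buys uniformity of technique. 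It uses only the weak formulation and reuses the companion-operator machinery of Lemma~\ref{lem:bih-consistency-big-e} and the strain-gradient consistency estimates.

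One presentational point: your opening sentence advertises the $\Pi_{\rm A}$-based idea, but your main argument never uses it. You should reword it to match the argument you actually give.
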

	
	\begin{proof}
		Inserting \(u_0\) gives the following equation:
		\begin{align*}
			a_h^{\rm sp}(\Pi_h^{\rm sp}u,\bm v_h)-(f,v_h)
			={}&
			\iota^2(\nabla^2u,\nabla_h\psi_h)
			+(\nabla(u-u_0),\nabla_h v_h)\\
			&+(\nabla u_0,\nabla_h v_h)-(f,v_h).
		\end{align*}
		
		Since \(\Pi_{\rm A}v_h\in H_0^1(\Omega;\mathbb R^d)\), the weak formulation of \eqref{eq:Poisson-model} yields
		\[(\nabla u_0,\nabla_h v_h)-(f,v_h)=\bigl(\nabla u_0,\nabla_h (v_h-\Pi_{\rm A}v_h)\bigr)-(f,v_h-\Pi_{\rm A}v_h)\]
		Inserting \(Q_0\nabla u_0\) into the first term and arguing as in
		\eqref{eq:bih-consistency-Q0-split}, Lemma~\ref{lem:bih-fractional-means} and \eqref{eq:H1-consistency-estimate} 
		give
		\[
		\left|
		(\nabla u_0,\nabla_h(v_h-\Pi_{\rm A}v_h))
		\right|
		\lesssim
		h \seminorm{u_0}_{2}\norm{\nabla_h v_h}_0.
		\]
		Moreover,
		\[
		\left|(f,v_h-\Pi_{\rm A}v_h)\right|
		\lesssim
		h\norm{f}_0\norm{\nabla_h v_h}_0.
		\]
		Combining with \eqref{eq:bih-H1-control}, we obtain
		\begin{equation}
			\label{eq:bih-consistency-lower-load-u0}
			\left|
			(\nabla u_0,\nabla_h v_h)-(f,v_h)
			\right|
			\lesssim
			\left(
			h\seminorm{u_0}_{2}
			+h\norm{f}_0
			\right)
			\tnorm{\bm v_h}.
		\end{equation}
		and then we have \eqref{eq:bih-consistency-estimate-small-e}.
	\end{proof}

    \begin{theorem}
        Let \(0<s\le 1\). Let \(u\in H^{2+s}(\Omega;\mathbb R^d)\cap H^2_0(\Omega;\mathbb R^d)\) and \(\bm u_h\in V^{\rm sp}_{h0}\) be the solutions of \eqref{eq:bih-weak} and \eqref{eq:bih-discrete}, respectively. Under Assumption~\ref{assum:Regularity}, we have 
        \begin{equation}
            |(u,\nabla u)-\bm u_h|_{\rm E}
		\lesssim
		\min\{\iota h^s|u|_{2+s}
            +h|u|_2
			+\iota^{-1} h^2|u|_2
			+\iota^{-1}h^2\norm{f}_0,(\iota^{1/2}+h)\norm{f}_0\}.
        \end{equation}
    \end{theorem}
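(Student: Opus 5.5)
The estimate follows by combining Lemma~\ref{lem:bih-convergence} with the two consistency bounds, Lemma~\ref{lem:bih-consistency-big-e} and Lemma~\ref{lem:bih-consistency-small-e}, and bounding the resulting terms in two different ways, one for each argument of the minimum. We start from \eqref{eq:bih-energy-low-reg}. It bounds $|(u,\nabla u)-\bm u_h|_{\rm E}$ by the interpolation and stabilization contribution, which is $\min\{\iota h^s|u|_{2+s}+h|u|_2,(\iota^{1/2}+h)\norm{f}_0\}$, plus the consistency supremum
\[
\mathcal S:=\sup_{0\ne\bm v_h\in V_{h0}^{\rm sp}}\frac{a_h^{\rm sp}(\Pi_h^{\rm sp}u,\bm v_h)-(f,v_h)}{\tnorm{\bm v_h}}.
\]
It therefore suffices to show that $\mathcal S$ is bounded by each of the two quantities in the claimed minimum. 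Then $\mathcal S\lesssim$ their minimum, and since $\min\{a,b\}+\min\{c,d\}\le 2\min\{a+c,b+d\}$, the theorem follows.

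\emph{First branch.} Lemma~\ref{lem:bih-consistency-big-e} gives directly
\[
\mathcal S\lesssim \iota h^s|u|_{2+s}+\iota^{-1}h^2|u|_2+\iota^{-1}h^2\norm{f}_0.
\]
Adding the first argument of the interpolation minimum, $\iota h^s|u|_{2+s}+h|u|_2$, yields exactly the first expression in the theorem.

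\emph{Second branch.} Lemma~\ref{lem:bih-consistency-small-e} gives $\mathcal S\lesssim \iota|u|_2+|u-u_0|_1+h|u_0|_2+h\norm{f}_0$. We bound each term using Assumption~\ref{assum:Regularity}:
\begin{itemize}
\item by \eqref{eq:bib-regularity}, $\iota|u|_2\lesssim\iota^{1/2}\norm{f}_0$ and $|u-u_0|_1\lesssim\iota^{1/2}\norm{f}_0$;
\item by \eqref{eq:Poisson-regularity}, $h|u_0|_2\lesssim h\norm{f}_0$.
\end{itemize}
Hence $\mathcal S\lesssim(\iota^{1/2}+h)\norm{f}_0$. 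Adding the second argument of the interpolation minimum, which is also $(\iota^{1/2}+h)\norm{f}_0$, gives the second expression in the theorem.

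The only point needing care is the bookkeeping of the minimum. The interpolation part of Lemma~\ref{lem:bih-convergence} is already a minimum, and $\mathcal S$ is bounded separately by each branch. So the total error is at most (first interpolation bound plus first consistency bound) and at most (second interpolation bound plus second consistency bound), and hence at most the minimum of the two sums. No step beyond citing the earlier lemmas and the regularity assumption seems delicate. One should only check that Lemma~\ref{lem:bih-consistency-small-e} applies, which requires $u_0\in H^2$; this is guaranteed by \eqref{eq:Poisson-regularity}.
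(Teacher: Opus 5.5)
Your proposal is correct and is essentially the paper's argument. The paper gives no separate proof of this theorem and leaves it as the immediate combination you carry out: insert Lemma~\ref{lem:bih-consistency-big-e}, and Lemma~\ref{lem:bih-consistency-small-e} together with Assumption~\ref{assum:Regularity}, into the consistency supremum of Lemma~\ref{lem:bih-convergence}, then take the minimum branch by branch (your elementary inequality $\min\{a,b\}+\min\{c,d\}\le\min\{a+c,b+d\}$ even holds with constant $1$).
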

    
    \subsection{Equivalence with the RT--CR method}
In this section, we will introduce the equivalence between method
\eqref{eq:bih-discrete} and the mixed method proposed in
\cite{HuangTang2025}. This fact also implies the intrinsic selection
of the stabilized parameter $h_T^{-2}$.
For simplicity, we will omit the second-order term in \eqref{eq:bih-discrete}. For each $T\in\mathcal T_h$, define
\[
x_T=\frac{1}{|T|}\int_T x\,dx,
\quad
\gamma_T=\frac{d^2|T|}{\int_T|x-x_T|^2\,dx}.
\]
We use the local matrix space
\[
\Sigma(T)
=
\mathbb P_0(T;\mathbb R^{d\times d})
\oplus
\bigl\{
a\otimes(x-x_T):a\in\mathbb R^d
\bigr\},
\]
whose rows belong to the lowest-order Raviart--Thomas space,
and define
\[
\Sigma_h
=
\bigl\{
\tau_h\in H(\operatorname{div},\Omega;\mathbb R^{d\times d})
:
\tau_h|_T\in\Sigma(T)
\text{ for all }T\in\mathcal T_h
\bigr\}.
\]
Here the divergence acts row-wise, and
$(\nabla\phi)_{ij}=\partial_j\phi_i$. The RT--CR method seeks
$(\sigma_h,u_h)\in\Sigma_h\times V_h$ such that
\begin{subequations}\label{eq:rtcr}
\begin{align}
(\sigma_h,\tau_h)
+(\operatorname{div}\tau_h,\nabla_hu_h)
&=0
&&\forall\tau_h\in\Sigma_h,
\label{eq:rtcr-first}\\
-(\operatorname{div}\sigma_h,\nabla_hv_h)
&=(f,v_h)
&&\forall v_h\in V_h.
\label{eq:rtcr-second}
\end{align}
\end{subequations}
The corresponding CR--CR method seeks
$(u_h,\phi_h)\in V_h\times W_h$ such that
\begin{equation}\label{eq:crcr-weighted}
\begin{aligned}
&(\nabla_h\phi_h,\nabla_h\psi_h)
+\sum_{T\in\mathcal T_h}\gamma_T
(\nabla_hu_h-Q_0\phi_h,
 \nabla_hv_h-Q_0\psi_h)_T
=(f,v_h)
\end{aligned}
\end{equation}
for all $(v_h,\psi_h)\in V_h\times W_h$.

\begin{lemma}\label{prop:rtcr-crcr-equivalence}
Given a solution $(u_h,\phi_h)$ of
\eqref{eq:crcr-weighted}, the reconstruction
\begin{equation}\label{eq:rtcr-stress-reconstruction}
\sigma_h|_T
=
\nabla\phi_h|_T
+\frac{\gamma_T}{d}
\bigl(Q_{0,T}\phi_h-\nabla u_h|_T\bigr)
\otimes(x-x_T)
\end{equation}
belongs to $\Sigma_h$ and together with $u_h$, solves
\eqref{eq:rtcr}.
Conversely, given a solution $(\sigma_h,u_h)$ of
\eqref{eq:rtcr}, the function defined by
\begin{equation}\label{eq:rtcr-cr-reconstruction}
\phi_h|_T(x)
=
\nabla u_h|_T
+\gamma_T^{-1}\operatorname{div}\sigma_h|_T
+(Q_{0,T}\sigma_h)(x-x_T)
\end{equation}
belongs to $W_h$ and, together with $u_h$, solves
\eqref{eq:crcr-weighted}.
These two reconstructions are mutually inverse.
\end{lemma}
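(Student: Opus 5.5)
The plan is to verify everything through three elementary local identities on a simplex $T$, and then use the test functions of each method to obtain the interelement continuity required by the other. I assume, as the surrounding text indicates, that $V_h=\mathrm{CR}_{h0}$ and $W_h=[\mathrm{CR}_{h0}]^d$.

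For $\tau=C+a\otimes(x-x_T)\in\Sigma(T)$ the three identities are:
\[
\operatorname{div}\tau=d\,a,\qquad Q_{0,T}\tau=C,\qquad
\bigl(a\otimes(x-x_T),b\otimes(x-x_T)\bigr)_T=a\cdot b\int_T|x-x_T|^2\,dx=\frac{d^2|T|}{\gamma_T}\,a\cdot b .
\]
The third identity explains the choice of $\gamma_T$.

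I also need a global Green identity. For $\tau_h\in\Sigma_h$ and $\psi_h\in W_h$,
\[
(\operatorname{div}\tau_h,\psi_h)+(\tau_h,\nabla_h\psi_h)=0 .
\]
This holds because each row of $\tau_h$ has a normal component that is constant on every face and continuous across interior faces. In addition, $\jump{\psi_h}$ has zero mean on interior faces and $\psi_h$ has zero mean on boundary faces.

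Finally, on a face $F\subset\partial T$ with outward normal $n$, the quantity $(x-x_T)\cdot n$ is constant and equal to $d|T|/((d+1)|F|)$. The CR basis function $\lambda_F$ has mean value $1/(d+1)$ on $T$.

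\emph{Forward direction.} Set $b_T:=Q_{0,T}\phi_h-\nabla u_h|_T$, so that $\sigma_h|_T=\nabla\phi_h|_T+\tfrac{\gamma_T}{d}\,b_T\otimes(x-x_T)$. By the local facts, the flux of row $i$ of $\sigma_h|_T$ through $F$ is
\[
|F|\,(\nabla\phi_h)_{i\cdot}\cdot n+\frac{\gamma_T|T|}{d+1}\,(b_T)_i .
\]
Now test \eqref{eq:crcr-weighted} with $v_h=0$ and $\psi_h=e_i\lambda_F$ for an interior face $F$. Computing elementwise, the $\nabla_h\phi_h$ term gives exactly the first part of the fluxes from $T^\pm$, and the stabilization term gives $-\gamma_T(b_T)_i|T|/(d+1)$ from each side. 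Hence the equation states that the sum of the outward fluxes of row $i$ from $T^+$ and $T^-$ vanishes, so $\sigma_h\in\Sigma_h$.

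Next, $\operatorname{div}\sigma_h|_T=\gamma_T b_T$. Testing \eqref{eq:crcr-weighted} with $(v_h,0)$ then yields \eqref{eq:rtcr-second} directly.

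For \eqref{eq:rtcr-first}, write $\tau_h|_T=C_T+c_T\otimes(x-x_T)$. By the Green identity, $(\nabla_h\phi_h,\tau_h)=-(\operatorname{div}\tau_h,\phi_h)=-\sum_T d\,c_T\cdot Q_{0,T}\phi_h\,|T|$. The cross terms between constant and $(x-x_T)$ parts vanish, and the $(x-x_T)$ parts give $\sum_T\gamma_T\cdot\tfrac{d^2|T|}{d\gamma_T}\,b_T\cdot c_T=\sum_T d|T|\,b_T\cdot c_T$. Adding these gives $(\sigma_h,\tau_h)=-\sum_T d|T|\,c_T\cdot\nabla u_h|_T=-(\operatorname{div}\tau_h,\nabla_h u_h)$, which is \eqref{eq:rtcr-first}.

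\emph{Converse direction.} Define $\phi_h$ by \eqref{eq:rtcr-cr-reconstruction}, so that $Q_{0,T}\phi_h=\nabla u_h|_T+\gamma_T^{-1}\operatorname{div}\sigma_h|_T$ and $\nabla\phi_h|_T=Q_{0,T}\sigma_h$.

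To show $\phi_h\in W_h$, test \eqref{eq:rtcr-first} with the $\Sigma_h$ basis functions associated with a single face $F$ and a row $i$. I expect this to be the main obstacle, because it needs the explicit RT basis $\frac{|F|}{d|T|}(x-p_F)$ and careful bookkeeping of signs and orientations. On each adjacent $T$, this $\tau$ splits into a constant part plus $\tfrac{|F|}{d|T|}(x-x_T)$, and its face mean equals its value at the face barycenter. Using the third local identity, \eqref{eq:rtcr-first} should reduce to $\frac{1}{|F|}\int_F(\phi_h|_{T^+}-\phi_h|_{T^-})\,ds=0$. For boundary faces, the same computation with the boundary basis functions gives zero mean, since all RT degrees of freedom are free in $\Sigma_h$. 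If the direct computation becomes messy, my fallback is a dimension argument: pair $\Sigma_h$ with the face means and use $\operatorname{div}$-testing via the Green identity.

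Once $\phi_h\in W_h$, the CR--CR equation follows by reversing the forward computations. From $\gamma_T(\nabla u_h-Q_0\phi_h)=-\operatorname{div}\sigma_h$ and \eqref{eq:rtcr-second}, the $v_h$-part of \eqref{eq:crcr-weighted} holds. For the $\psi_h$-part, I use $\nabla_h\phi_h=Q_0\sigma_h$ and the Green identity applied to $\sigma_h$. The latter gives $(Q_0\sigma_h,\nabla_h\psi_h)=(\sigma_h,\nabla_h\psi_h)=-(\operatorname{div}\sigma_h,\psi_h)=-(\operatorname{div}\sigma_h,Q_0\psi_h)$, which cancels the stabilization contribution.

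\emph{Mutual inverse.} Substituting \eqref{eq:rtcr-stress-reconstruction} into \eqref{eq:rtcr-cr-reconstruction}: $Q_{0,T}\sigma_h=\nabla\phi_h$ and $\gamma_T^{-1}\operatorname{div}\sigma_h=b_T$. This returns $\nabla u_h+b_T+\nabla\phi_h(x-x_T)=\phi_h$. The reverse substitution is checked in the same way.
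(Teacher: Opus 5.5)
Your proof is correct and rests on the same facts as the paper's: orthogonality of the constant and $(x-x_T)$ parts, the identity $\int_T|x-x_T|^2\,dx=d^2|T|/\gamma_T$, and integration by parts against facewise constant normal traces. The difference is in packaging.

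The paper sets $H_T(v,\psi)=\nabla\psi+\tfrac{\gamma_T}{d}(Q_{0,T}\psi-\nabla v)\otimes(x-x_T)$, so that \eqref{eq:crcr-weighted} reads $\sum_T(H_T(u_h,\phi_h),H_T(v_h,\psi_h))_T=(f,v_h)$. It then proves once that $(H_T(v,\psi),\tau)_T=-(\nabla v,\operatorname{div}\tau)_T+\sum_{F\subset\partial T}\langle Q_F\psi,\tau n_T\rangle_F$ for all $\tau\in\Sigma(T)$. From this single identity, normal continuity of $\sigma_h$ follows by varying the face means of $\psi_h$, and $\phi_h\in W_h$ follows by varying the normal traces of $\tau_h$. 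No explicit CR or RT basis functions are needed. The second of these arguments is exactly your fallback.

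Your explicit route also goes through. For the face-$F$, row-$i$ RT basis function, let $p_F$ be the vertex opposite $F$ and $m_F$ the barycenter of $F$. The relation $(x_T-p_F)/d=m_F-x_T$ turns $(\sigma_h,\tau)_T+(\operatorname{div}\tau,\nabla u_h)_T$ into $\pm|F|$ times the mean of $(\phi_h|_T)_i$ over $F$, as you predicted.

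One sign needs correcting in the forward direction. With $v_h=0$, the stabilization term equals $\gamma_T(Q_{0,T}\phi_h-\nabla u_h,Q_{0,T}\psi_h)_T=+\gamma_T(b_T)_i|T|/(d+1)$, not its negative. Only with the plus sign does the tested equation say that the two outward fluxes of row $i$ sum to zero.
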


\begin{proof}
For a scalar affine function $v$ and a vector-valued affine
function $\psi$ on $T$, define
\[
H_T(v,\psi)
=
\nabla\psi
+\frac{\gamma_T}{d}
(Q_{0,T}\psi-\nabla v)\otimes(x-x_T).
\]
Since $\int_T(x-x_T)\,dx=0$, the constant and linear parts
are $L^2(T)$-orthogonal. The definition of $\gamma_T$
therefore gives
\begin{equation}\label{eq:rtcr-local-energy}
\begin{aligned}
(H_T(u,\phi),H_T(v,\psi))_T
={}&(\nabla\phi,\nabla\psi)_T+\gamma_T
(Q_{0,T}\phi-\nabla u,
 Q_{0,T}\psi-\nabla v)_T.
\end{aligned}
\end{equation}
Moreover, direct calculation and integration by parts yield
\begin{equation}\label{eq:rtcr-local-identity}
\begin{aligned}
(H_T(v,\psi),\tau)_T
&=(\nabla\psi,\tau)_T
  +(Q_{0,T}\psi-\nabla v,\operatorname{div}\tau)_T\\
&=-(\nabla v,\operatorname{div}\tau)_T
  +\sum_{F\subset\partial T}
   \langle Q_F\psi,\tau n_T\rangle_F
\end{aligned}
\end{equation}
for every $\tau\in\Sigma(T)$.
Here we used that $\operatorname{div}\tau$ is constant on $T$
and $\tau n_T$ is constant on each face.

Suppose first that $(u_h,\phi_h)$ solves
\eqref{eq:crcr-weighted}, and set
$\sigma_h|_T=H_T(u_h,\phi_h)$.
By \eqref{eq:rtcr-local-energy} and
\eqref{eq:rtcr-local-identity},
\begin{equation}\label{eq:rtcr-reconstructed-equation}
-\sum_{T\in\mathcal T_h}
(\operatorname{div}\sigma_h,\nabla v_h)_T
+\sum_{T\in\mathcal T_h}
 \sum_{F\subset\partial T}
 \langle Q_F\psi_h,\sigma_hn_T\rangle_F
=(f,v_h).
\end{equation}
Taking $v_h=0$ and choosing the interior face averages of
$\psi_h$ independently shows that
\[
\sigma_h|_{T^+}n_{T^+}
+\sigma_h|_{T^-}n_{T^-}=0
\qquad
\text{on every interior face }
F=\partial T^+\cap\partial T^-.
\]
Hence $\sigma_h\in\Sigma_h$.
Taking $\psi_h=0$ in
\eqref{eq:rtcr-reconstructed-equation} gives
\eqref{eq:rtcr-second}.
For any $\tau_h\in\Sigma_h$, summing
\eqref{eq:rtcr-local-identity} over the mesh gives
\[
(\sigma_h,\tau_h)
=-(\nabla_hu_h,\operatorname{div}\tau_h),
\]
because the interior face contributions cancel and the
boundary face averages of $\phi_h$ vanish.
Thus \eqref{eq:rtcr-first} also holds.

Conversely, let $(\sigma_h,u_h)$ solve \eqref{eq:rtcr},
and define the piecewise affine function $\phi_h$ by
\eqref{eq:rtcr-cr-reconstruction}.
Since $\sigma_h|_T\in\Sigma(T)$, it admits the representation
\[
\sigma_h|_T
=
Q_{0,T}\sigma_h
+\frac{1}{d}
(\operatorname{div}\sigma_h|_T)\otimes(x-x_T).
\]
Consequently,
\[
\nabla\phi_h|_T=Q_{0,T}\sigma_h,
\qquad
Q_{0,T}\phi_h-\nabla u_h|_T
=\gamma_T^{-1}\operatorname{div}\sigma_h|_T,
\]
and hence $\sigma_h|_T=H_T(u_h,\phi_h)$.
Using \eqref{eq:rtcr-local-identity} and
\eqref{eq:rtcr-first}, we obtain
\[
\sum_{T\in\mathcal T_h}
\sum_{F\subset\partial T}
\langle Q_F\phi_h,\tau_hn_T\rangle_F
=0
\qquad\forall\tau_h\in\Sigma_h.
\]
The constant normal traces of $\tau_h\in\Sigma_h$ can be
prescribed independently on all mesh faces.
It follows that the averages of $\phi_h$ agree across
interior faces and vanish on boundary faces.
Therefore $\phi_h\in W_h$.

Finally, for any $(v_h,\psi_h)\in V_h\times W_h$,
the normal continuity of $\sigma_h$ and
\eqref{eq:rtcr-local-identity} imply
\[
\begin{aligned}
\sum_{T\in\mathcal T_h}
(H_T(u_h,\phi_h),H_T(v_h,\psi_h))_T
&=-(\operatorname{div}\sigma_h,\nabla_hv_h)\\
&=(f,v_h).
\end{aligned}
\]
Together with \eqref{eq:rtcr-local-energy}, this proves
\eqref{eq:crcr-weighted}.
The reconstruction \eqref{eq:rtcr-cr-reconstruction}
recovers both the elementwise mean and the gradient of
$\phi_h$ in \eqref{eq:rtcr-stress-reconstruction}.
Thus the two reconstructions are mutually inverse.
\end{proof}

\begin{remark}
The exact equivalence requires
$\gamma_T=d^2|T|/J_T$.
Since $\gamma_T\simeq h_T^{-2}$ on a shape-regular mesh,
replacing $\gamma_T$ by $h_T^{-2}$ gives a uniformly
equivalent bilinear form, but generally does not preserve
the exact correspondence between the discrete solutions.
\end{remark}
	\section{The strain-gradient elasticity problem}
	\label{sec:sge}
	Problem \eqref{eq:sge-model} admits the following variational formulation: find \(u\in H^2_0(\Omega;\mathbb R^d)\) such that 
	\begin{equation}
		\label{eq:sge-weak}
		\iota^2(\nabla\sigma(u),\nabla\varepsilon(v))+(\sigma(u),\varepsilon(v))=(f,v),\quad\forall v\in H^2_0(\Omega;\mathbb R^d).
	\end{equation}
	The strain gradient \(\nabla \varepsilon(v)\) and the stress gradient \(\nabla \sigma(v)\) are third‑order tensors defined respectively by \((\nabla\varepsilon(v))_{ijk}=\partial \varepsilon_{ij}/\partial x_k\) and \((\nabla\sigma(v))_{ijk}=\partial \sigma_{ij}/\partial x_k\).
	
	The well-posedness of \eqref{eq:sge-weak} follows from the \(H^2\)-Korn inequality~\cite{li2017two,li2021h2korn}:
    \begin{equation}
        \label{eq:H2-korn}
        \|\varepsilon(v)\|_0^2+\|\nabla\varepsilon(v)\|_0^2
        \ge C\|\nabla v\|_1^2
        \qquad\text{for all }v\in H^2_0(\Omega;\mathbb R^d),
    \end{equation}
    where \(C=1/2\) for \(d=2\) and \(C=1-1/\sqrt{2}\) for \(d=3\).
	
	Then, we introduce a reduced formulation and state the regularity assumptions. Taking \(\iota=0\), problem \eqref{eq:sge-model} becomes the linear elasticity model. 
	\begin{equation}
		\label{eq:sge-LE}
		-\operatorname{div}\sigma(u_0)=f
		\quad\text{in }\Omega,
		\qquad
		u_0=0
		\quad\text{on }\partial\Omega.
	\end{equation}
	\begin{assumption}
		\label{ass:le-regularity}
		Assume that \(f\in L^2(\Omega;\mathbb R^d)\), let \(u_0\in H^1_0(\Omega;\mathbb R^d)\) solve \eqref{eq:sge-LE}, then
		\begin{equation}
			\label{eq:sge-LE-regularity}
			\|u_0\|_2+\lambda\|\mathrm{div}\,u_0\|_1\lesssim\|f\|_0.
		\end{equation}
		The hidden constant is independent of $\lambda$.
	\end{assumption}
	\begin{assumption}
		\label{ass:sge-regularity}
		Assume that $f\in L^2(\Omega;\mathbb R^d)$, let $u$ be the solution of
		\eqref{eq:sge-weak}, and $u_0\in H^1_0(\Omega;\mathbb R^d)$ solve \eqref{eq:sge-LE},
		then 
		\begin{align}
			\label{eq:sge-regularity-div}
			\lambda\|\operatorname{div}(u-u_0)\|_0
			+\lambda\iota\|\operatorname{div}u\|_1
			+\lambda\iota^2\|\operatorname{div}u\|_2
			\lesssim
			\iota^{1/2}\|f\|_0,\\
			\label{eq:sge-regularity-nabla}
			|u-u_0|_1+\iota|u|_2+\iota^2|u|_3
			\lesssim
			\iota^{1/2}\|f\|_0.
		\end{align}
		The hidden constants are independent of $\iota$ and $\lambda$.
	\end{assumption}
	\begin{remark}
		In fact, Assumption \ref{ass:le-regularity} holds when \(\Omega\) is a convex polyhedral domain \cite{brenner1992linear,maz_i_a2010elliptic}, whereas the currently available proof of Assumption \ref{ass:sge-regularity} requires the boundary \(\partial\Omega\) to be smooth \cite{liao2023taylor,chen2023robust}. In particular, for a bounded polygonal domain \(\Omega\), the regularity estimate \eqref{eq:sge-regularity-nabla} holds \cite{li2018new}, the hidden constants are independent of  \(\iota\).
	\end{remark}
	
	For \(v\in H^2(\Omega;\mathbb{R}^d)\), we have
	\(\varepsilon(v)\in H^1(\Omega;\mathbb{S})\), where \(\mathbb{S}\) denotes the space of symmetric matrices.  We construct a nonconforming discretization of the product space
	\(H^2(\Omega;\mathbb{R}^d)\times H^1(\Omega;\mathbb{S})\): For any \(T\in \mathcal T_h\), the local shape function space is \((\mathbb P_1(T)\otimes\mathbb R^d)\times(\mathbb P_1(T)\otimes\mathbb S)\) and the nodal variables are 
	\begin{equation}
		\label{eq:vh-dofs}
		\left\{\frac{1}{|F|}\int_F v\mathrm{d}s,\quad\frac{1}{|F|}\int_F\psi\mathrm{d}s,\qquad\forall F\in \mathcal F_h,~F\subset \partial T\right\}, 
	\end{equation}
	for \((v,\psi)\in (\mathbb P_1(T)\otimes\mathbb R^d)\times(\mathbb P_1(T)\otimes\mathbb S)\). The global space
	\begin{equation}
		\label{eq:vh-global}
		V_h^{\rm sg}:=\{(v,\psi)\in (\mathbb P_1(\mathcal T_h)\otimes\mathbb R^d)\times(\mathbb P_1(\mathcal T_h)\otimes\mathbb S)
		:\text{\eqref{eq:vh-dofs} are single-valued } \forall F\in \mathcal F^i_h\}.
	\end{equation}
	The homogeneous subspace is
	\begin{equation}
		\label{eq:sge-space-zero}
		V_{h0}^{\rm sg}
		:=
		\left\{(v_h,\psi_h)\in V_h^{\rm sg}:
		\int_F v_h\,\mathrm{d}s=0,
		\ \int_F\psi_h\,\mathrm{d}s=0,
		\ \forall F\in\mathcal F^\partial_h
		\right\}.
	\end{equation}
	For $v\in H^2(\Omega;\mathbb R^d)$, define
	\begin{equation}
		\label{eq:sge-Qh}
		\Pi_h^{\rm sg}v:=(\Pi_{\rm CR} v,\Pi_{\rm CR}\varepsilon(v)).
	\end{equation}
	If $v\in H^2_0(\Omega;\mathbb R^d)$, then
	$\Pi_h^{\rm sg}v\in V_{h0}^{\rm sg}$.
	
	The discrete problem is: find
	$\bm u_h=(u_h,\phi_h)\in V_{h0}^{\rm sg}$ such that
	\begin{equation}
		\label{eq:sge-discrete}
		a_h^{\rm sg}(\bm u_h,\bm v_h)
		+s_h^{\rm sg}(\bm u_h,\bm v_h)
		=(f,v_h)
		\qquad
		\forall\bm v_h=(v_h,\psi_h)\in V_{h0}^{\rm sg}.
	\end{equation}
	where
	\begin{align}
		a_h^{\rm sg}(\bm u_h,\bm v_h)
		:=&\ 2\mu\iota^2(\nabla_h\phi_h,\nabla_h\psi_h)
		+\lambda\iota^2(\nabla_h\operatorname{tr}\phi_h,
		\nabla_h\operatorname{tr}\psi_h)\notag\\
		&+2\mu(\varepsilon_h(u_h),\varepsilon_h(v_h))
		+\lambda(\operatorname{div}_h u_h,\operatorname{div}_h v_h),
		\label{eq:sge-ah}\\
		s_h^{\rm sg}(\bm u_h,\bm v_h)
		:=&\ \sum_{F\in\mathcal F^i_h}h_F^{-1}
		\langle\jump{u_h},\jump{v_h}\rangle_F\notag\\
		&+\iota^2\sum_{T\in\mathcal T_h}h_T^{-2}
		(\varepsilon_h(u_h)-Q_0\phi_h,
		\varepsilon_h(v_h)-Q_0\psi_h)_T .
		\label{eq:sge-s}
	\end{align}
	Associated with the bilinear form \(a_h^{\rm sg}+s_h^{\rm sg}\), we define a semi-norm \(\norme{\cdot}\) by
	\begin{equation}
		\label{eq:norme}
		\norme{\bm v}^2=a_h^{\rm sg}(\bm v,\bm v)+s_h^{\rm sg}(\bm v, \bm v),\qquad \forall \bm v:=(v,\psi)\in \bigl(H^2_0(\Omega;\mathbb R^d)\times H^1_0(\Omega;\mathbb S)\bigr)+V_h^{\rm sg}.
	\end{equation}
	
	To establish the well-posedness of the above discrete scheme, we introduce the following discrete Korn inequality from \cite[Theorem 3.1]{brenner2004korn}.
	\begin{lemma}
		\label{lem:sge-discrete-korn}
		For every
		$v_h\in\mathrm{CR}_{h,0}\otimes\mathbb R^d$,
		\begin{equation}
			\label{eq:sge-discrete-korn}
			\|v_h\|_0^2+\|\nabla_h v_h\|_0^2
			\lesssim
			\|\varepsilon_h(v_h)\|_0^2
			+\sum_{F\in\mathcal F^i_h}h_F^{-1}
			\|\jump{v_h}\|_{0,F}^2.
		\end{equation}
	\end{lemma}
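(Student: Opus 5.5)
The discrete Korn inequality for Crouzeix--Raviart functions is a known result, cited from Brenner's work, so the natural plan is to reproduce Brenner's argument rather than derive it from the continuous Korn inequality alone. The reason a jump term is needed is that $\varepsilon_h$ alone fails: the CR space contains piecewise rigid motions whose face means match, so $\varepsilon_h(v_h)=0$ does not force $v_h=0$.

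The approach is to pass through a conforming companion. Let $E v_h\in H^1_0(\Omega;\R^d)$ be a conforming $\mathbb P_1$ Lagrange function obtained by nodal averaging of $v_h$ (vertex values averaged over the elements sharing the vertex, boundary vertices set to zero). The standard enriching estimate, proved elementwise by scaling and a finite-dimensional kernel argument, is
\[
\sum_{T\in\Tcal_h}h_T^{-2}\|v_h-Ev_h\|_{0,T}^2+\|\nabla_h(v_h-Ev_h)\|_0^2\lesssim\sum_{F\in\Fcal_h}h_F^{-1}\|\jump{v_h}\|_{0,F}^2 .
\]
Here the boundary faces enter through the convention $\jump{v_h}=v_h|_F$. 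Since $v_h$ has zero mean on each face, the jump over a face of $v_h$ is controlled by $h_F\|\nabla_F\jump{v_h}\|_{0,F}$. One must check that the boundary terms can be absorbed into the right-hand side of \eqref{eq:sge-discrete-korn}. This is the delicate point: the statement only lists interior jumps, so on boundary faces I would use the zero face mean together with a local argument bounding the tangential gradient of $v_h$ on boundary faces by $\|\varepsilon_h(v_h)\|_{0,T}$ plus neighboring interior jumps. If this fails, I would fall back on Brenner's formulation, in which boundary faces are included.

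With the companion in hand, the continuous Korn inequality on $H^1_0$, namely $\|\nabla w\|_0\le\sqrt2\|\varepsilon(w)\|_0$, applied to $w=Ev_h$ gives
\[
\|\nabla_h v_h\|_0\le\|\nabla_h(v_h-Ev_h)\|_0+\sqrt2\,\|\varepsilon(Ev_h)\|_0\le(1+\sqrt2)\|\nabla_h(v_h-Ev_h)\|_0+\sqrt2\,\|\varepsilon_h(v_h)\|_0 .
\]
Combining this with the enriching estimate bounds $\|\nabla_h v_h\|_0^2$. Lemma~\ref{lem:CR-discrete-poincare}, applied componentwise, then bounds $\|v_h\|_0$ as well.

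The main obstacle is the enriching estimate. Locally, on each vertex patch, the difference between $v_h|_T$ at a vertex and the averaged value is a sum of jumps across faces containing that vertex. Each such jump is a linear function with zero mean on its face, so its vertex value is bounded by $h_F^{-(d-1)/2}\|\jump{v_h}\|_{0,F}$. A scaling argument then gives the bound on both $h_T^{-1}\|v_h-Ev_h\|_{0,T}$ and $\|\nabla(v_h-Ev_h)\|_{0,T}$, and finite overlap of the patches completes the estimate.
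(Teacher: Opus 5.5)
Your architecture (conforming companion, Korn's inequality for the companion, then Lemma~\ref{lem:CR-discrete-poincare}) is standard. As written, however, it proves only the weaker inequality whose jump sum runs over all of $\mathcal F_h$, and the step you propose for removing the boundary faces is false.

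\textbf{The gap.} Setting the boundary vertex values of $Ev_h$ to zero puts $\sum_{F\in\mathcal F_h^\partial}h_F^{-1}\|v_h\|_{0,F}^2$ into the enriching estimate. The lemma contains interior faces only, and so does the stabilization $s_h^{\rm sg}$, whose norm property the lemma is used for.

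Your local bound fails. For a boundary face $F_b\subset\partial T$, the quantity $h_{F_b}^{-1}\|v_h\|_{0,F_b}^2$ (equivalently, the tangential gradient on $F_b$) cannot be bounded by $\|\varepsilon_h(v_h)\|_{0,T}$ and the jumps across the interior faces of $T$, even if $\varepsilon_h$ on the neighbours is added. Here is a counterexample for $d=2$.
\begin{itemize}
\item Take a boundary triangle $T$ whose two interior neighbours $T_1,T_2$ have no boundary edge, as on a uniform mesh.
\item Let $m(x)=\omega(x-x_{F_b})^\perp$, where $x_{F_b}$ is the midpoint of $F_b$.
\item Prescribe the face mean $m(x_F)$ on every edge of $T,T_1,T_2$, and zero on all other edges.
\end{itemize}
Then $v_h\in\mathrm{CR}_{h,0}\otimes\R^2$, since its mean on $F_b$ is $m(x_{F_b})=0$. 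Also $v_h=m$ on $T\cup T_1\cup T_2$, so $\varepsilon_h(v_h)=0$ there and $v_h$ has no jump on $\partial T\setminus F_b$. Yet $h_{F_b}^{-1}\|v_h\|_{0,F_b}^2=\omega^2h_{F_b}^2/12\neq0$.

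A single vanishing face mean carries no information about the skew part of $\nabla v_h$. Rigid motions are excluded only by the simultaneous vanishing of the means on several boundary faces, which is not an element-local fact. Your fallback therefore proves a different statement. The paper gives no argument of its own; it only invokes Brenner's Theorem~3.1.

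\textbf{How to close it.} The rigid-motion component must be controlled through the boundary means on a patch or globally.

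Globally, average at all vertices, including boundary ones. Then $\tilde Ev_h\in H^1(\Omega;\R^d)$, and its error involves only
\[
J_i(v_h)^2:=\sum_{F\in\mathcal F_h^i}h_F^{-1}\|\jump{v_h}\|_{0,F}^2 .
\]
Let $m$ be the $L^2$-projection of $\tilde Ev_h$ onto rigid motions and $w=v_h-m$. Korn's second inequality and a broken trace inequality give
\[
\|\nabla_h w\|_0+\|w\|_{0,\partial\Omega}\lesssim R:=\|\varepsilon_h(v_h)\|_0+J_i(v_h).
\]
Since $\int_F v_h\,ds=0$, we have $|F|\,m(x_F)=-\int_F w\,ds$ on every $F\in\mathcal F_h^\partial$. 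Hence
\[
N(m):=\Bigl(\sum_{F\in\mathcal F_h^\partial}|F|\,|m(x_F)|^2\Bigr)^{1/2}\le\|w\|_{0,\partial\Omega}\lesssim R .
\]
$N$ controls $\|\nabla m\|_0$ uniformly for small $h$, because $\|\cdot\|_{0,\partial\Omega}$ is a norm on rigid motions and
\[
\|m\|_{0,\partial\Omega}^2-N(m)^2=\sum_{F\in\mathcal F_h^\partial}\|m-m(x_F)\|_{0,F}^2\lesssim h^2\|\nabla m\|_0^2 .
\]
Coarse meshes need a separate compactness argument. Then $\|\nabla_h v_h\|_0\le\|\nabla_h w\|_0+\|\nabla m\|_0\lesssim R$.

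Alternatively, your local route works if you replace the single element by the vertex patch of a boundary vertex. In 2D this patch contains two boundary edges with distinct midpoints. A piecewise linear field with $\varepsilon_h=0$, no jumps inside the patch, and zero means on those two edges is therefore zero. Scaling and shape regularity then give $\sum_{F\in\mathcal F_h^\partial}h_F^{-1}\|v_h\|_{0,F}^2\lesssim R^2$. In 3D you must check that the patch's boundary faces have non-collinear centroids. After that, your $H_0^1$ argument goes through unchanged.
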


    For \(\bm v_h\in V_{h0}^{\rm sg}\), assume \(\norme{\bm v_h}=0\). By Lemma~\ref{lem:sge-discrete-korn}, $v_h=0$, and hence
		$Q_0\psi_h=0$. Since $\nabla_h\psi_h=0$, the tensor field $\psi_h$
		is elementwise constant, so that $\psi_h=Q_0\psi_h=0$. Hence, \(\norme{\cdot}\) defines a norm on \(V_{h0}^{\rm sg}\).
	
	
	For \(\bm v\in H^2_0(\Omega;\mathbb R^d)\times H^1_0(\Omega;\mathbb S)+V_h^{\rm sg}\), we define the energy seminorm
	\[ |\bm v|_{\mathrm E}^2:=a_h^{\rm sg}(\bm v,\bm v),\] 
	and establish the corresponding error estimates for the CR-CR method \eqref{eq:sge-discrete} with respect to this seminorm. Let \(u\) be the solution of \eqref{eq:sge-weak}. Let \(\bm u_h =(u_h,\phi_h)\) be the solution of \eqref{eq:sge-discrete}. By triangle inequality and the definition of \(\norme{\cdot}\), we first obtain
	\[|(u,\varepsilon(u))-\bm u_h|_{\rm E}\le |(u,\varepsilon(u))-\Pi_h^{\rm sg} u|_{\rm E}
	+\norme{\Pi_h^{\rm sg}u-\bm u_h}.\]

    \begin{lemma}
    \label{lem:sge-interpolation-error}
        Let \(u\in H^{2+s}(\Omega; \mathbb R^d)\cap H^2_0(\Omega;\mathbb R^d)\), \(0<s\le 1\) be the solution of \eqref{eq:sge-weak}, and \(u\) satisfy \eqref{eq:sge-regularity-div} and \eqref{eq:sge-regularity-nabla}, then
        \begin{equation}
        \label{eq:sge-interpolation-error}
            |(u,\varepsilon(u))-\Pi_h^{\rm sg} u|_{\rm E}^2
            \lesssim \min\{h^s\iota(|u|_{2+s}+\sqrt{\lambda}|\operatorname{div}u|_{1+s})+h(|u|_2+\sqrt{\lambda}|\operatorname{div}u|_1),(\iota^{1/2}+h)\norm{f}_0\}
        \end{equation}  
    \end{lemma}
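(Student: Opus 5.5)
The plan follows the interpolation part of Lemma~\ref{lem:bih-convergence}. (The statement writes \(|\cdot|_{\rm E}^2\) on the left, but the right-hand side scales like a norm; I will prove the bound for \(|\cdot|_{\rm E}\) itself, which is what the error analysis needs.)

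\textbf{Step 1: exact formula.} By the commuting identities \eqref{eq:cr-commuting-general}--\eqref{eq:cr-commuting-sym} of Lemma~\ref{lem:cr-commuting-general}:
\begin{itemize}[label={}]
\item \(\nabla_h\PiCR\varepsilon(u)=Q_0\nabla\varepsilon(u)\), and \(\nabla_h\operatorname{tr}\PiCR\varepsilon(u)=\nabla_h\PiCR\operatorname{div}u=Q_0\nabla\operatorname{div}u\), because the trace commutes with \(\PiCR\);
\item \(\varepsilon_h(\PiCR u)=Q_0\varepsilon(u)\) and \(\operatorname{div}_h\PiCR u=Q_0\operatorname{div}u\).
\end{itemize}
Inserting these into \eqref{eq:sge-ah} gives
\[
|(u,\varepsilon(u))-\Pi_h^{\rm sg}u|_{\rm E}^2
=2\mu\iota^2\|(I-Q_0)\nabla\varepsilon(u)\|_0^2+\lambda\iota^2\|(I-Q_0)\nabla\operatorname{div}u\|_0^2
+2\mu\|(I-Q_0)\varepsilon(u)\|_0^2+\lambda\|(I-Q_0)\operatorname{div}u\|_0^2 .
\]
Each term is a pure \(Q_0\)-projection error, so no jump or stabilization contributions appear.

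\textbf{Step 2: first branch of the minimum.} Apply Lemma~\ref{lem:bih-fractional-means} with exponent \(s\) to \(\nabla\varepsilon(u)\) and \(\nabla\operatorname{div}u\). This gives \(\iota h^s(|u|_{2+s}+\sqrt\lambda|\operatorname{div}u|_{1+s})\). Apply the standard \(P_0\) estimate with exponent \(1\) to \(\varepsilon(u)\) and \(\operatorname{div}u\). This gives \(h(|u|_2+\sqrt\lambda|\operatorname{div}u|_1)\). Use \(\mu\le\mu_1\) and take square roots.

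\textbf{Step 3: second branch, robust in \(\iota\) and \(\lambda\).} Use \(L^2\)-stability of \(Q_0\) on the higher-order terms:
\[
\iota\|(I-Q_0)\nabla\varepsilon(u)\|_0\le\iota|u|_2\lesssim\iota^{1/2}\|f\|_0
\]
by \eqref{eq:sge-regularity-nabla}. For the trace term, \(\sqrt\lambda\,\iota\|\nabla\operatorname{div}u\|_0\) is handled by combining the two regularity estimates:
\begin{itemize}[label={}]
\item if \(\lambda\le1\), use \(\iota\|\operatorname{div}u\|_1\le\iota|u|_2\lesssim\iota^{1/2}\|f\|_0\);
\item if \(\lambda\ge1\), use \(\sqrt\lambda\le\lambda\) and \(\lambda\iota\|\operatorname{div}u\|_1\lesssim\iota^{1/2}\|f\|_0\) from \eqref{eq:sge-regularity-div}.
\end{itemize}
For the lower-order terms, split \(u=(u-u_0)+u_0\) as in Lemma~\ref{lem:bih-convergence}. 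This gives \(\|(I-Q_0)\varepsilon(u)\|_0\lesssim|u-u_0|_1+h|u_0|_2\lesssim(\iota^{1/2}+h)\|f\|_0\) by \eqref{eq:sge-regularity-nabla} and \eqref{eq:sge-LE-regularity}. Similarly,
\[
\sqrt\lambda\|(I-Q_0)\operatorname{div}u\|_0\le\sqrt\lambda\|\operatorname{div}(u-u_0)\|_0+\sqrt\lambda\,h|\operatorname{div}u_0|_1 .
\]
Both pieces are treated with the same \(\lambda\le1\) versus \(\lambda\ge1\) case split:
\begin{itemize}[label={}]
\item for \(\lambda\ge1\), use \(\lambda\|\operatorname{div}(u-u_0)\|_0\lesssim\iota^{1/2}\|f\|_0\) from \eqref{eq:sge-regularity-div} and \(\lambda\|\operatorname{div}u_0\|_1\lesssim\|f\|_0\) from \eqref{eq:sge-LE-regularity};
\item for \(\lambda\le1\), use \(|u-u_0|_1\) and \(\|u_0\|_2\) instead.
\end{itemize}

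\textbf{Main obstacle.} The delicate point is this \(\lambda\)-robust handling of the \(\sqrt\lambda\)-weighted divergence terms. The regularity assumptions only control \(\lambda\) times the divergence quantities, not \(\sqrt\lambda\) times them. The fix is the case split above: write \(\sqrt\lambda\le\max\{1,\lambda\}\) and bound the \(\lambda\le1\) part by the full gradient estimates. Collecting Steps 2 and 3 yields the stated minimum.
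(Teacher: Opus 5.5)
Your proposal is correct and follows the paper's proof. Both arguments use the commuting identities to reduce the seminorm to four $Q_0$-projection errors. These are bounded directly for the first branch, and for the second branch via the splitting $u=(u-u_0)+u_0$ together with the regularity assumptions. Your case split $\sqrt\lambda\le\max\{1,\lambda\}$ makes explicit a step the paper leaves implicit: it applies the $\lambda$-weighted bounds \eqref{eq:sge-regularity-div} and \eqref{eq:sge-LE-regularity} to $\sqrt\lambda$-weighted terms without comment. You are also right that the square on the left-hand side should be dropped.
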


    \begin{proof}
        By Lemma~\ref{lem:cr-commuting-general}, we have
	\begin{align}
		|(u,\varepsilon(u))-\Pi_h^{\rm sg} u|_{\rm E}^2
		={}&
		2\mu\iota^2
		\|(I-Q_0)\nabla\varepsilon(u)\|_0^2
		+
		\lambda\iota^2
		\|(I-Q_0)\nabla\operatorname{div}u\|_0^2
		\notag\\
		&+
		2\mu
		\|(I-Q_0)\varepsilon(u)\|_0^2
		+
		\lambda
		\|(I-Q_0)\operatorname{div}u\|_0^2,
	\end{align}
	where \(I\) denotes the identity operator. Then
	\begin{align}
		\label{eq:interpolation-estimate1}
		|(u,\varepsilon(u)) -\Pi_h^{\rm sg} u|_{\rm E}
		\lesssim &\iota\|(I-Q_0)\nabla^2 u\|_0
		+\iota\sqrt{\lambda}\|(I-Q_0)\nabla\operatorname{div}u\|_0\notag\\
		&+\|(I-Q_0)\nabla u\|_0
		+\sqrt{\lambda}\|(I-Q_0)\operatorname{div} u\|_0\notag\\
        \lesssim & h^s\iota(|u|_{2+s}+\sqrt{\lambda}|\operatorname{div}u|_{1+s})+h(|u|_2+\sqrt{\lambda}|\operatorname{div}u|_1).
	\end{align}
	Let \(u_0\) be the solution of \eqref{eq:sge-LE}, by \(u=(u-u_0)+u_0\), we obtain 
	\begin{align}
		\label{eq:interpolation-estimate2}
		|(u,\varepsilon(u)) -\Pi_h^{\rm sg} u|_{\rm E}
		\lesssim &\iota|u|_2
		+\iota\sqrt{\lambda}|\operatorname{div}u|_1\notag\\
		&+|u-u_0|_1
		+\sqrt{\lambda}\|\operatorname{div} (u-u_0)\|_0\notag\\
		& +\|(I-Q_0)\nabla u_0\|_0
		+\sqrt{\lambda}\|(I-Q_0)\operatorname{div} u_0\|_0\notag\\
        \lesssim & (\iota^{1/2}+h)\norm{f}_0.
	\end{align}
    \end{proof}
	
	By the discrete problem \eqref{eq:sge-discrete}, we have
	\begin{align*}
		\norme{\Pi_h^{\rm sg} u-\bm u_h}^2
		={}&
		a_h^{\rm sg}(\Pi_h^{\rm sg} u,\Pi_h^{\rm sg} u-\bm u_h)
		+
		s_h^{\rm sg}(\Pi_h^{\rm sg} u,\Pi_h^{\rm sg} u-\bm u_h)
		\notag\\
		&-
		(f,\Pi_{\rm CR}u-u_h).
	\end{align*}
	Therefore,
	\[
	\norme{\Pi_h^{\rm sg} u-\bm u_h}
	\le
	\sup_{0\ne\bm v_h\in V_{h0}^{\rm sg}}
	\frac{
		a_h^{\rm sg}(\Pi_h^{\rm sg} u,\bm v_h)
		+
		s_h^{\rm sg}(\Pi_h^{\rm sg} u,\bm v_h)
		-
		(f,v_h)
	}{
		\norme{\bm v_h}
	}.
	\]

    \begin{lemma}
    \label{lem:stabilization-error}
        Let \(u\in H^{2+s}(\Omega; \mathbb R^d)\cap H^2_0(\Omega;\mathbb R^d)\), \(0<s\le 1\) be the solution of \eqref{eq:sge-weak}, and \(u\) satisfy \eqref{eq:sge-LE-regularity}, \eqref{eq:sge-regularity-div} and \eqref{eq:sge-regularity-nabla}, then 
        \begin{equation}
        \label{eq:stabilization-error}
            \sup_{0\ne \bm v_h \in V^{\rm sg}_{h0}}\frac{s^{\rm sg}_h(\Pi^{\rm sg}_hu,\bm v_h)}{\norme{\bm v_h}}\lesssim \min \{h^s\iota|u|_{2+s}+h|u|_2,(\iota^{1/2}+h)\norm{f}_0\}.
        \end{equation}
    \end{lemma}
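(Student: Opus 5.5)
The plan is to bound the two parts of $s_h^{\rm sg}$ separately. For each we apply Cauchy--Schwarz and control the $\bm v_h$ factor by $\norme{\bm v_h}$. Since $s_h^{\rm sg}$ is symmetric positive semidefinite and $s_h^{\rm sg}(\bm v_h,\bm v_h)\le\norme{\bm v_h}^2$, as in the proof of Lemma~\ref{lem:bih-convergence} we have
\[
\bigl|s_h^{\rm sg}(\Pi_h^{\rm sg}u,\bm v_h)\bigr|\le s_h^{\rm sg}(\Pi_h^{\rm sg}u,\Pi_h^{\rm sg}u)^{1/2}\norme{\bm v_h}.
\]
It therefore suffices to bound $s_h^{\rm sg}(\Pi_h^{\rm sg}u,\Pi_h^{\rm sg}u)^{1/2}$ by the right-hand side of \eqref{eq:stabilization-error}.

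\textbf{Jump part.} Since $u\in H^2_0(\Omega;\mathbb R^d)$ is continuous, $\jump{\PiCR u}=\jump{\PiCR u-u}$ on every interior face. The trace inequality together with the standard CR interpolation estimate then gives
\[
\sum_{F\in\Fcal_h^i}h_F^{-1}\|\jump{\PiCR u}\|_{0,F}^2\lesssim\sum_T\bigl(h_T^{-2}\|u-\PiCR u\|_{0,T}^2+|u-\PiCR u|_{1,T}^2\bigr)\lesssim \|\nabla u-Q_0\nabla u\|_0^2 ,
\]
using the commuting property \eqref{eq:cr-commuting-general} for the gradient part. This yields the bound $h|u|_2$. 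For the low-regularity alternative, split $u=(u-u_0)+u_0$. This gives $|u-u_0|_1+h|u_0|_2$, which is $\lesssim(\iota^{1/2}+h)\|f\|_0$ by \eqref{eq:sge-regularity-nabla} and \eqref{eq:sge-LE-regularity}. Both estimates use only the stability of the CR interpolation in the broken $H^1$ seminorm with the $h^{-1}$-scaled $L^2$ term.

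\textbf{Strain part.} By \eqref{eq:cr-commuting-sym}, $\epsh(\PiCR u)=Q_0\varepsilon(u)$, so the residual is
\[
\epsh(\PiCR u)-Q_0\PiCR\varepsilon(u)=Q_0\bigl(\varepsilon(u)-\PiCR\varepsilon(u)\bigr).
\]
This is exactly as in \eqref{eq:inter-stabilization} with $\nabla u$ replaced by $\varepsilon(u)\in H^{1+s}$. Using the $L^2$-stability of $Q_0$ and the local CR interpolation estimate $\|w-\PiCR w\|_{0,T}\lesssim h_T^{1+s}|w|_{1+s,\omega_T}$ (cf.\ \cite[(3.2)]{cai2017Discontinuous}), we get
\[
\iota^2\sum_T h_T^{-2}\|Q_0(\varepsilon(u)-\PiCR\varepsilon(u))\|_{0,T}^2\lesssim \iota^2h^{2s}|u|_{2+s}^2 .
\]
With the weaker estimate $\|w-\PiCR w\|_{0,T}\lesssim h_T|w|_{1,T}$ instead, the bound becomes $\iota^2|u|_2^2\lesssim\iota\|f\|_0^2$ by \eqref{eq:sge-regularity-nabla}. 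Note that no $\lambda$ appears, since the stabilization involves only $\varepsilon$ and not the divergence.

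\textbf{Combination and main obstacle.} Combining the two parts gives both branches of the minimum. The main technical point is the fractional local CR interpolation estimate for $s<1$ on each element patch. If the cited result gives it only in global form, I would use Bramble--Hilbert on each $T$ with the fractional seminorm, which suffices after summation since the patches have finite overlap. The min-branches are handled separately and then combined.
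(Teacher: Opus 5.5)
Your proposal is correct and follows the paper's proof essentially step by step. Both arguments use Cauchy--Schwarz for the semidefinite form $s_h^{\rm sg}$. For the jump term, both apply the trace inequality to $\jump{\PiCR u-u}$ and split $u=(u-u_0)+u_0$ for the $f$-branch. For the strain term, both use the commuting identity to reduce the residual to $Q_0(\varepsilon(u)-\PiCR\varepsilon(u))$.

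The ``main obstacle'' you flag is harmless. The same face-mean Poincar\'e inequality you use in the jump part, combined with \eqref{eq:cr-commuting-general}, gives $\|w-\PiCR w\|_{0,T}\lesssim h_T\|(I-Q_0)\nabla w\|_{0,T}\lesssim h_T^{1+s}|w|_{1+s,T}$ directly. You also correctly keep the factor $\iota^2$ in front of $h^{2s}|u|_{2+s}^2$, which the paper's last display drops.
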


    \begin{proof}
        Since \(s_h^{\rm sg}\) is a symmetric positive semi-definite bilinear form,
	\[s_h^{\rm sg}(\Pi_h^{\rm sg} u,\bm v_h)
	\le s_h^{\rm sg}(\Pi_h^{\rm sg} u,\Pi_h^{\rm sg} u)^{1/2}
	s_h^{\rm sg}(\bm v_h,\bm v_h)^{1/2}
	\le s_h^{\rm sg}(\Pi_h^{\rm sg} u,\Pi_h^{\rm sg} u)^{1/2}
	\norme{\bm v_h}.\]
    The jump term can be estimated by the trace inequality:
		\[\sum_{F\in \mathcal F^i_h}h^{-1}_F\|\jump{\PiCR u}\|^2_{0,F}=
		\sum_{F\in \mathcal F^i_h}h^{-1}_F\|\jump{\PiCR u-u}\|^2_{0,F}\lesssim h^2|u|^2_2.\]
	Using \(u=(u-u_0)+u_0\) and \eqref{eq:cr-commuting-general}
	\begin{align*}
		\sum_{F\in \mathcal F^i_h}h^{-1}_F\|\jump{\Pi_{\rm CR} u}\|^2_{0,F}
		= & \sum_{F\in\mathcal F^i_h}h^{-1}_F\|\jump{\PiCR u-u}\|^2_{0,F}\\
		\lesssim&
		\|u-u_0\|^2_1+\|(I-Q_0)\nabla u_0\|^2_0
		+\sum_{F\in \mathcal F^i_h}
		h^{-2}_F\|u_0-\PiCR u_0\|^2_{0,\omega_F}\\
        \lesssim & (\iota+h^2)\norm{f}^2_0.
	\end{align*}
	and 
	\begin{align*}
		\iota^2\sum_{T\in\mathcal T_h}h^{-2}_T\|\varepsilon_h(\PiCR u)-Q_0\PiCR \varepsilon (u)\|^2_{0,T}
		= & \iota^2\sum_{T\in\mathcal T_h}h^{-2}_T\|Q_0(\varepsilon (u)-\PiCR \varepsilon (u))\|^2_{0,T}\\
		\lesssim & \iota^2\sum_{T\in\mathcal T_h}h^{-2}_T\|(I-\PiCR) \varepsilon (u)\|^2_{0,T}\\
		\lesssim &\min\{\iota\norm{f}^2_0,h^{2s}|u|^2_{2+s}\}.
	\end{align*}
	Then, we have \eqref{eq:stabilization-error}.
    \end{proof}
	
	Finally, we consider estimating the consistency term. Similar to \eqref{eq:H1-consistency-estimate}, for any \(v_h\in \mathrm{CR}_{h0}\otimes \mathbb R^d\), \(\Pi_{\rm A} v_h\in A_{h0}\otimes \mathbb R^d\subset H^2_0(\Omega;\mathbb R^d)\), 
	\begin{equation}
		\label{eq:H1-consistency-estimate-v}
		\left(\sum_{T\in\mathcal T_h}h^{-2}_T\|v_h-\Pi_{\rm A}v_h\|_{0,T}^2\right)^{1/2}+\|\nabla_h(v_h-\Pi_{\rm A}v_h)\|_0\lesssim \|\nabla_h v_h\|_0.
	\end{equation}
	
	\begin{lemma}
		\label{lem:consistence-error-small-iota}
		Let \(u\in H^2_0(\Omega;\mathbb R^d)\) and \(u_0\in H^2(\Omega;\mathbb R^d)\cap H^1_0(\Omega;\mathbb R^d)\) be the solutions of \eqref{eq:sge-weak} and \eqref{eq:sge-LE}, respectively. Then
		\begin{equation}
			\label{eq:consistence-error-small-iota}
			\sup_{0\ne \bm v_h\in V_{h0}^{\rm sg}}
			\frac{|a_h^{\rm sg}(\Pi_h^{\rm sg}u,\bm v_h)-(f,v_h)|}{\norme{\bm v_h}}
			\lesssim \iota|\sigma(u)|_1
			+\|\sigma(u-u_0)\|_0
			+h\|f\|_0.
		\end{equation}
	\end{lemma}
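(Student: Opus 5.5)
By the commuting identities of Lemma~\ref{lem:cr-commuting-general}, $\nabla_h\PiCR\varepsilon(u)=Q_0\nabla\varepsilon(u)$, $\varepsilon_h(\PiCR u)=Q_0\varepsilon(u)$ and $\operatorname{div}_h\PiCR u=Q_0\operatorname{div}u$. Hence
\[
a_h^{\rm sg}(\Pi_h^{\rm sg}u,\bm v_h)=\iota^2(Q_0\nabla\sigma(u),\nabla_h\psi_h)+(Q_0\sigma(u),\varepsilon_h(v_h)).
\]
Since $\nabla_h\psi_h$ and $\varepsilon_h(v_h)$ are piecewise constant, the projections $Q_0$ may be dropped. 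The consistency functional is therefore
\[
\iota^2(\nabla\sigma(u),\nabla_h\psi_h)+(\sigma(u),\varepsilon_h(v_h))-(f,v_h).
\]
We insert $u_0$ and split it into three terms:
\[
\iota^2(\nabla\sigma(u),\nabla_h\psi_h)
+(\sigma(u-u_0),\varepsilon_h(v_h))
+\bigl[(\sigma(u_0),\varepsilon_h(v_h))-(f,v_h)\bigr].
\]

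The first term is bounded by Cauchy--Schwarz, $\iota|\sigma(u)|_1\,\|\nabla_h\psi_h\|_0$. Controlling $\iota\|\nabla_h\psi_h\|_0$ by $\norme{\bm v_h}$ needs care when $\lambda$ is large: $a_h^{\rm sg}$ contains $2\mu\iota^2\|\nabla_h\psi_h\|_0^2$ with $\mu\ge\mu_0$, so $\iota\|\nabla_h\psi_h\|_0\lesssim\norme{\bm v_h}$ with a constant depending only on $\mu_0$. The second term is bounded by $\|\sigma(u-u_0)\|_0\|\varepsilon_h(v_h)\|_0$, and $\sqrt{2\mu}\,\|\varepsilon_h(v_h)\|_0\le\norme{\bm v_h}$. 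Note that we must not split $\sigma$ into $\mu$ and $\lambda$ parts, since that would produce $\lambda$-dependent constants; keeping $\sigma(u-u_0)$ intact avoids this.

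The third term is the main obstacle, because it must be bounded robustly in $\lambda$. Let $\Pi_{\rm A}v_h\in H^1_0$. The weak form of \eqref{eq:sge-LE} gives $(\sigma(u_0),\varepsilon(\Pi_{\rm A}v_h))=(f,\Pi_{\rm A}v_h)$, so the third term equals
\[
(\sigma(u_0),\varepsilon_h(v_h-\Pi_{\rm A}v_h))-(f,v_h-\Pi_{\rm A}v_h).
\]
For the load part, \eqref{eq:H1-consistency-estimate-v} gives $h\|f\|_0\|\nabla_hv_h\|_0$. Lemma~\ref{lem:sge-discrete-korn} then bounds $\|\nabla_hv_h\|_0\lesssim\norme{\bm v_h}$ through the jump stabilization. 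For the stress part, we subtract $Q_0\sigma(u_0)$ and integrate by parts elementwise, as in the proof of Lemma~\ref{lem:bih-consistency-big-e}. This yields face terms $\sum_F\langle\jump{Q_0\sigma(u_0)}n_F,\{v_h-\Pi_{\rm A}v_h\}\rangle_F$ plus the volume term $((I-Q_0)\sigma(u_0),\nabla_h(v_h-\Pi_{\rm A}v_h))$; the averages drop out because the jumps have zero face mean. Lemma~\ref{lem:bih-fractional-means} with $s=1$, together with trace inequalities and \eqref{eq:H1-consistency-estimate-v}, bounds both by $h|\sigma(u_0)|_1\|\nabla_hv_h\|_0$. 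By Assumption~\ref{ass:le-regularity}, $|\sigma(u_0)|_1\lesssim\|u_0\|_2+\lambda\|\operatorname{div}u_0\|_1\lesssim\|f\|_0$ uniformly in $\lambda$, so this gives $h\|f\|_0\norme{\bm v_h}$.

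Collecting the three bounds yields \eqref{eq:consistence-error-small-iota}.
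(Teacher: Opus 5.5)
Your proposal is correct and follows essentially the same route as the paper. It uses the commuting identities, the three-term split after inserting $u_0$, testing the linear elasticity problem with $\Pi_{\rm A}v_h$, splitting off $Q_0\sigma(u_0)$ with elementwise integration by parts into face jumps, and closes with Lemma~\ref{lem:bih-fractional-means}, \eqref{eq:H1-consistency-estimate-v} and Lemma~\ref{lem:sge-discrete-korn}; your explicit appeal to Assumption~\ref{ass:le-regularity} to turn $h|\sigma(u_0)|_1$ into $h\|f\|_0$ makes precise a step the paper leaves implicit.
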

	
	\begin{proof}
		By \eqref{eq:cr-commuting-sym}, \eqref{eq:sge-LE} and integration by parts, we obtain
		\begin{align*}
			a_h^{\rm sg}(\Pi_h^{\rm sg} u,\bm v_h)-(f,v_h)=\iota^2(\nabla\sigma(u),\nabla_h\psi_h)
			+(\sigma(u-u_0),\varepsilon_h(v_h))\\
			+(\sigma(u_0),\varepsilon_h(v_h-\Pi_{\rm A}v_h))-(f,v_h-\Pi_{\rm A}v_h).
		\end{align*}
		
		Inserting \(Q_0\sigma(u_0)\) gives
		\begin{equation}
			\label{eq:variational-error}
			(\sigma(u_0),\varepsilon_h(v_h-\Pi_{\rm A}v_h))=((I-Q_0)\sigma(u_0),\varepsilon_h(v_h-\Pi_{\rm A}v_h))
			+(Q_0\sigma(u_0),\varepsilon_h(v_h-\Pi_{\rm A}v_h)),
		\end{equation}
		the trace inequality, \eqref{eq:H1-consistency-estimate-v} and \eqref{eq:bih-fractional-means} imply
		\begin{align}
			\label{eq:jump-estimate}
			(Q_0\sigma(u_0),\varepsilon_h(v_h-\Pi_{\rm A}v_h))
			=&\sum_{F\in\mathcal F^i_h}(\jump{Q_0\sigma(u_0)}n_F,\{v_h-\Pi_{\rm A}v_h\})_F\notag\\
			\lesssim&\left(\sum_{F\in \mathcal F^i_h}h_F\|\jump{Q_0\sigma(u_0)}\|^2_{0,F}\right)^{1/2}
			\left(\sum_{F\in \mathcal F^i_h}h^{-1}_{F}
			\|\{v_h-\Pi_{\rm A}v_h\}\|^2_{0,F}\right)^{1/2}\notag\\
			\lesssim & h|\sigma(u_0)|_1\|\nabla_h v_h\|_0,
		\end{align}
		and 
		\[((I-Q_0)\sigma(u_0),\varepsilon_h(v_h-\Pi_{\rm A}v_h))
		\lesssim h|\sigma(u_0)|_1 \|\nabla_hv_h\|_0.\]
		
		Using \eqref{eq:sge-discrete-korn},
		\begin{align*}
			(\nabla\sigma(u),\nabla_h\psi_h)\lesssim &\iota^{-1}|\sigma(u)|_1\norme{\bm v_h},\\
			(\sigma(u-u_0),\varepsilon_h(v_h))\lesssim&\|\sigma(u-u_0)\|_0\norme{\bm v_h}\\
			(\sigma(u_0),\varepsilon_h(v_h-\Pi_{\rm A}v_h))\lesssim &
			h\norm{f}_0\norme{\bm v_h}.
		\end{align*}
		By \eqref{eq:H1-consistency-estimate-v} and \eqref{eq:sge-discrete-korn},
		\[(f,v_h-\Pi_{\rm A}v_h)\lesssim h \|f\|_0\norme{\bm v_h}.\]
	\end{proof}

	For large $\iota$, the consistency estimate under $u\in H^2(\Omega;\mathbb R^d)$ requires a componentwise Scott--Zhang-type quasi-interpolant $\Pi_{\rm SZ}$ from $L^2(\Omega;\mathbb S\otimes\mathbb R^d)$ onto the continuous piecewise linear space; cf.~\cite{scott1990finite}. We also need the following lemma.
	
	\begin{lemma}
		\label{lem:sge-H2-companion}
		For every
		\(v_h\in\mathrm{CR}_{h,0}\otimes\mathbb R^d\), 
		\begin{equation}
			\label{eq:sge-H2-companion}
			\sum_{j=0}^1
			\sum_{T\in \mathcal T_h}h^{2(j-1)}_T
			\|\nabla^j_h(v_h-\Pi_{\rm A}v_h)\|^2_{0,T}
			+\sum_{T\in \mathcal T_h}
			h_T^2\|\nabla^2\Pi_{\rm A} v_h\|_{0,T}^2
			\lesssim
			\|\varepsilon_h(v_h)\|_0^2
			+\sum_{F\in\mathcal F^i_h} h^{-1}_F\|\jump{v_h}\|^2_{0,F}.
		\end{equation}
	\end{lemma}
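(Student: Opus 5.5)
The plan is to reduce everything to the averaging estimate \eqref{eq:H1-consistency-estimate-v} and the discrete Korn inequality of Lemma~\ref{lem:sge-discrete-korn}. Write the right-hand side of \eqref{eq:sge-H2-companion} as
\[
K(v_h)^2:=\|\varepsilon_h(v_h)\|_0^2+\sum_{F\in\mathcal F_h^i}h_F^{-1}\|\jump{v_h}\|_{0,F}^2 .
\]

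\textbf{Step 1 (the terms $j=0,1$).} Apply \eqref{eq:H1-consistency-estimate-v} componentwise to $v_h\in \mathrm{CR}_{h,0}\otimes\mathbb R^d$. This gives
\[
\sum_{T}h_T^{-2}\|v_h-\Pi_{\rm A}v_h\|_{0,T}^2+\|\nabla_h(v_h-\Pi_{\rm A}v_h)\|_0^2\lesssim\|\nabla_h v_h\|_0^2 .
\]
Lemma~\ref{lem:sge-discrete-korn} then bounds $\|\nabla_h v_h\|_0^2\lesssim K(v_h)^2$, which settles the $j=0$ and $j=1$ terms.

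\textbf{Step 2 (the weighted Hessian term).} On each $T$, $v_h|_T$ is affine, so $\nabla^2\Pi_{\rm A}v_h=\nabla^2(\Pi_{\rm A}v_h-v_h)$ on $T$. Since $\Pi_{\rm A}v_h|_T$ is a polynomial of fixed degree (Argyris or Zhang), $\Pi_{\rm A}v_h-v_h$ restricted to $T$ lies in a finite-dimensional polynomial space. The local inverse inequality therefore gives
\[
h_T\|\nabla^2(\Pi_{\rm A}v_h-v_h)\|_{0,T}\lesssim\|\nabla(\Pi_{\rm A}v_h-v_h)\|_{0,T}.
\]
Squaring and summing over $T$, the result is bounded by the $j=1$ term from Step 1, hence by $K(v_h)^2$.

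\textbf{Main obstacle.} The only delicate point is the validity of \eqref{eq:H1-consistency-estimate-v} itself: $\Pi_{\rm A}$ averages all free degrees of freedom of $A_{h0}$, including derivative degrees of freedom of $v_h|_T$. For an affine $v_h|_T$ these are only first derivatives, equal to $\nabla v_h|_T$, while second- and higher-derivative degrees of freedom vanish. The local difference $\Pi_{\rm A}v_h-v_h$ on $T$ is then controlled by differences of element values and gradients over the patch $\omega_T$, that is, by jumps of $v_h$ and of $\nabla_h v_h$ across faces in $\omega_T$. Scaling gives
\[
h_T^{-2}\|v_h-\Pi_{\rm A}v_h\|_{0,T}^2\lesssim\sum_{F\subset\omega_T}\Bigl(h_F^{-3}\|\jump{v_h}\|_{0,F}^2+h_F^{-1}\|\jump{\nabla_h v_h}\|_{0,F}^2\Bigr).
\]
For CR functions each jump $\jump{v_h}$ has zero mean on $F$, so the Poincar\'e inequality on $F$ bounds $h_F^{-3}\|\jump{v_h}\|_{0,F}^2$ by $h_F^{-1}\|\nabla_F\jump{v_h}\|_{0,F}^2$. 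The inverse trace inequality then bounds $h_F^{-1}\|\jump{\nabla_h v_h}\|_{0,F}^2$ by $\|\nabla_h v_h\|_{0,\omega_F}^2$.

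We take this (the cited averaging result) as given. Combining Steps 1 and 2 yields \eqref{eq:sge-H2-companion}.
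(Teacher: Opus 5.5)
Your Steps~1 and~2 are exactly the paper's proof: the averaging estimate \eqref{eq:H1-consistency-estimate-v} plus the discrete Korn inequality of Lemma~\ref{lem:sge-discrete-korn} handle the $j=0,1$ terms, and the elementwise inverse inequality applied to $\nabla^2(\Pi_{\rm A}v_h-v_h)$ handles the Hessian term. One caveat about your ``main obstacle'' aside, which the proof does not use: your face weights are too large by a factor $h^{-2}$, since the correct local bound is $h_T^{-2}\|v_h-\Pi_{\rm A}v_h\|_{0,T}^2\lesssim\sum_{F\subset\omega_T}\bigl(h_F^{-1}\|\jump{v_h}\|_{0,F}^2+h_F\|\jump{\nabla_h v_h}\|_{0,F}^2\bigr)$, and your final step $h_F^{-1}\|\jump{\nabla_h v_h}\|_{0,F}^2\lesssim\|\nabla_h v_h\|_{0,\omega_F}^2$ is false for piecewise constants (the right-hand side needs a factor $h_F^{-2}$), so that sketch does not close as written; this is harmless only because you, like the paper, take the cited averaging estimate as given.
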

	
	\begin{proof}
		Since \(v_h|_T\in\mathbb P_1(T)\), the inverse inequality yields
		\[
		h_T^2\|\nabla^2\Pi_{\rm A} v_h\|_{0,T}^2
		=h_T^2\|\nabla^2(\Pi_{\rm A} v_h-v_h)\|_{0,T}^2
		\lesssim\|\nabla(\Pi_{\rm A} v_h-v_h)\|_{0,T}^2.
		\]
		Combining \eqref{eq:H1-consistency-estimate-v} with Lemma~\ref{lem:sge-discrete-korn}, we complete the proof.
	\end{proof}
	
	\begin{lemma}
		\label{lem:consistency-error-low-R}
		Let \(u\in H^2_0(\Omega;\mathbb R^d)\) be the solution of \eqref{eq:sge-weak}, with \(f\in L^2(\Omega;\mathbb R^d)\). Then 
		\begin{align}
			\sup_{0\ne \bm v_h\in V_{h0}^{\rm sg}}
			\frac{|a_h^{\rm sg}(\Pi_h^{\rm sg}u,\bm v_h)-(f, v_h)|}{\norme{\bm v_h}}
			\lesssim 
			\iota\|(I-\Pi_{\rm SZ})\nabla\sigma(u)\|_0
			+\iota^2\left(\sum_{T\in \mathcal T_h}h^{-2}_T\|(I-\Pi_{\rm SZ})\nabla\sigma(u)\|^2_{0,T}\right)^{1/2}\notag\\
			+\iota h|\Pi_{\rm SZ}\nabla\sigma(u)|_1
			+\iota^2\left(\sum_{F\in \mathcal F^i_h}h_F\|\jump{\operatorname{div}\Pi_{\rm SZ}\nabla\sigma(u)}\|^2_{0,F}\right)^{1/2}
			+h(|\sigma(u)|_1+\|f\|_0)
		\end{align}
	\end{lemma}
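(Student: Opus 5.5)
Set $\tau:=\nabla\sigma(u)$ (a third-order tensor; its contraction with $\nabla_h\psi_h$ is the $\iota^2$-term of $a_h^{\rm sg}$) and $w:=\Pi_{\rm A}v_h\in H^2_0(\Omega;\mathbb R^d)$. The plan is to test the weak formulation \eqref{eq:sge-weak} with $w$ and subtract. By \eqref{eq:cr-commuting-sym} and the symmetry of $\psi_h$,
\begin{align*}
a_h^{\rm sg}(\Pi_h^{\rm sg}u,\bm v_h)-(f,v_h)
={}&\iota^2(\tau,\nabla_h\psi_h-\nabla\varepsilon(w))
+(\sigma(u),\varepsilon_h(v_h-w))\\
&-(f,v_h-w).
\end{align*}
The last two terms are handled exactly as in Lemma~\ref{lem:consistence-error-small-iota}. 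Insert $Q_0\sigma(u)$, integrate by parts elementwise, use the zero face means of $\jump{v_h-w}$, and apply Lemma~\ref{lem:bih-fractional-means}, \eqref{eq:H1-consistency-estimate-v} and Lemma~\ref{lem:sge-discrete-korn}. This gives the bound $h(|\sigma(u)|_1+\|f\|_0)\norme{\bm v_h}$.

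For the $\iota^2$-term, split $\tau=(I-\Pi_{\rm SZ})\tau+\Pi_{\rm SZ}\tau$.

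\textbf{Nonsmooth part.} First, $\iota^2((I-\Pi_{\rm SZ})\tau,\nabla_h\psi_h)\le\iota\|(I-\Pi_{\rm SZ})\tau\|_0\norme{\bm v_h}$. Second, for $((I-\Pi_{\rm SZ})\tau,\nabla\varepsilon(w))_T$ use the inverse estimate $\|\nabla^2 w\|_{0,T}\lesssim h_T^{-1}\|\nabla_h(w-v_h)\|_{0,T}$ (since $v_h|_T$ is affine) together with Lemma~\ref{lem:sge-H2-companion}. This yields $\iota^2\bigl(\sum_T h_T^{-2}\|(I-\Pi_{\rm SZ})\tau\|_{0,T}^2\bigr)^{1/2}\norme{\bm v_h}$.

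\textbf{Continuous piecewise linear part.} Write $\nabla_h\psi_h-\nabla\varepsilon(w)=\nabla_h(\psi_h-\varepsilon_h(v_h))+\nabla\varepsilon_h(v_h-w)$ elementwise; the first piece is just $\nabla_h\psi_h$ because $\varepsilon_h(v_h)$ is piecewise constant.
\begin{itemize}[leftmargin=*]
\item The term $\iota^2(\Pi_{\rm SZ}\tau,\nabla_h\psi_h)$: insert $Q_0$, so that $((I-Q_0)\Pi_{\rm SZ}\tau,\nabla_h\psi_h)\lesssim h|\Pi_{\rm SZ}\tau|_1\|\nabla_h\psi_h\|_0$, giving $\iota h|\Pi_{\rm SZ}\tau|_1\norme{\bm v_h}$.
\item The term $(Q_0\Pi_{\rm SZ}\tau,\nabla_h\psi_h)$: integrate by parts and replace $\psi_h$ by $\psi_h-Q_0\psi_h$ plus $Q_0\psi_h-\varepsilon_h(v_h)$. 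The latter is weighted by the stabilization $\iota^2h_T^{-2}\|\varepsilon_h(v_h)-Q_0\psi_h\|^2$, which produces the face term $\iota^2(\sum_F h_F\|\jump{\operatorname{div}\Pi_{\rm SZ}\tau}\|^2)^{1/2}$.
\item The term $\iota^2(\Pi_{\rm SZ}\tau,\nabla\varepsilon_h(v_h-w))$: integrate by parts elementwise into $-(\operatorname{div}\Pi_{\rm SZ}\tau,\varepsilon_h(v_h-w))_T$ plus face terms. The face terms collect as $\jump{\cdot}$ of the smooth-across-faces $\Pi_{\rm SZ}\tau$ against averages, and they vanish by continuity of $\Pi_{\rm SZ}\tau$ together with $w\in H^2$. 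The volume part is controlled by $\|\operatorname{div}\Pi_{\rm SZ}\tau\|_0 h$ via Lemma~\ref{lem:sge-H2-companion}, with $\|\operatorname{div}\Pi_{\rm SZ}\tau\|\lesssim|\Pi_{\rm SZ}\tau|_1$. Then $\iota^2\lesssim\iota$ for $\iota\le1$ gives the $\iota h|\Pi_{\rm SZ}\tau|_1$ term.
\end{itemize}

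\textbf{Main obstacle.} The delicate step is the second bullet. I must arrange the integration by parts so that only the jump of the elementwise divergence appears. This requires using that $\varepsilon_h(v_h)$ is piecewise constant with face-mean continuity only up to the jump term in $s_h^{\rm sg}$. Where face contributions of $\{\varepsilon_h(v_h)-\psi_h\}$ arise, they are bounded by the trace inequality and the stabilization plus $\|\nabla_h\psi_h\|$, with the weight $h_F$ balanced exactly against $h_T^{-2}$.
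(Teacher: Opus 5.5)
You have a genuine gap in the continuous part $\iota^2(\Pi_{\rm SZ}\tau,\nabla_h\psi_h-\nabla\varepsilon(w))$. It sits exactly where the jump term of the statement is produced.

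\textbf{Third bullet.} The face terms do not vanish. Integrating by parts against $\varepsilon_h(v_h-w)$ leaves $\sum_F(\Pi_{\rm SZ}\tau\,n_F,\jump{\varepsilon_h(v_h)})_F$, and $\varepsilon_h(v_h)$ is discontinuous for a CR field; only the $\varepsilon(w)$ part is continuous. More seriously, the volume part $(\operatorname{div}\Pi_{\rm SZ}\tau,\varepsilon_h(v_h-w))$ carries no factor $h$. Lemma~\ref{lem:sge-H2-companion} gives an $h$ gain only for $\|v_h-\Pi_{\rm A}v_h\|_0$, whereas $\|\nabla_h(v_h-\Pi_{\rm A}v_h)\|_0$ is merely $\lesssim\norme{\bm v_h}$. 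So you only get $\iota^2|\Pi_{\rm SZ}\tau|_1\norme{\bm v_h}$, which does not tend to zero with $h$.

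\textbf{Second bullet.} It cannot supply the missing term. $Q_0\Pi_{\rm SZ}\tau$ is elementwise divergence-free, so integrating it by parts yields jumps of $Q_0\Pi_{\rm SZ}\tau$ itself, never of $\operatorname{div}\Pi_{\rm SZ}\tau$. Moreover, splitting $\psi_h$ into $(\psi_h-Q_0\psi_h)+(Q_0\psi_h-\varepsilon_h(v_h))$ silently drops the face term against $\{\varepsilon_h(v_h)\}$, which is again only $O(\iota^2|\Pi_{\rm SZ}\tau|_1)$. Taken separately, $\iota^2(\Pi_{\rm SZ}\tau,\nabla_h\psi_h)$ and $\iota^2(\Pi_{\rm SZ}\tau,\nabla\varepsilon(w))$ are each too large; the estimate relies on a cancellation between them.

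\textbf{The missing step.} Integrate the whole term by parts at once, using the continuity of $\Pi_{\rm SZ}\tau$ and of $\varepsilon(w)$, together with $\varepsilon(w)=0$ on $\partial\Omega$:
\[
(\Pi_{\rm SZ}\tau,\nabla_h(\psi_h-\varepsilon(w)))
=-(\operatorname{div}\Pi_{\rm SZ}\tau,\psi_h-\varepsilon(w))
+\sum_{F\in\mathcal F_h}(\Pi_{\rm SZ}\tau\,n_F,\jump{\psi_h})_F .
\]
Since $\jump{\psi_h}$ has zero face means, subtracting patch means bounds the face sum by $h|\Pi_{\rm SZ}\tau|_1\|\nabla_h\psi_h\|_0$, hence by $\iota h|\Pi_{\rm SZ}\tau|_1\norme{\bm v_h}$ after multiplying by $\iota^2$.

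In the volume term, $\operatorname{div}\Pi_{\rm SZ}\tau$ is piecewise constant, so you may replace $\psi_h$ by $Q_0\psi_h$ and then insert $\varepsilon_h(v_h)$. This gives two pieces.
\begin{itemize}
\item The piece $(\operatorname{div}\Pi_{\rm SZ}\tau,Q_0\psi_h-\varepsilon_h(v_h))$ is where the stabilization enters. It gives $\iota h|\Pi_{\rm SZ}\tau|_1\norme{\bm v_h}$, not the jump term.
\item The piece $(\operatorname{div}\Pi_{\rm SZ}\tau,\varepsilon_h(v_h)-\varepsilon(w))$ must be integrated by parts once more, elementwise. The average contributions drop out because $\jump{v_h-w}$ has zero face means, and boundary faces contribute nothing since $w=0$ and $v_h$ has zero means there. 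What remains is $\sum_{F\in\mathcal F_h^i}(\jump{\operatorname{div}\Pi_{\rm SZ}\tau}n_F,\{v_h-w\})_F$.
\end{itemize}
For that last sum, the trace inequality combined with both the $h_T^{-1}\|v_h-w\|_{0,T}$ and the $\|\nabla(v_h-w)\|_{0,T}$ bounds of Lemma~\ref{lem:sge-H2-companion} produces exactly $\iota^2\bigl(\sum_F h_F\|\jump{\operatorname{div}\Pi_{\rm SZ}\nabla\sigma(u)}\|_{0,F}^2\bigr)^{1/2}\norme{\bm v_h}$.

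Your initial identity, your treatment of $(I-\Pi_{\rm SZ})\tau$, and the lower-order terms are fine.
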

	
	\begin{proof}
		By \eqref{eq:sge-weak}, \eqref{eq:cr-commuting-sym} and integration by parts, we obtain
		\begin{align*}
			a_h^{\rm sg}(\Pi_h^{\rm sg} u,\bm v_h)-(f,v_h)
			=&\iota^2((I-\Pi_{\rm SZ})\nabla\sigma(u),\nabla_h(\psi_h-\varepsilon(\Pi_{\rm A} v_h)))\\
			&-\iota^2(\operatorname{div}\Pi_{\rm SZ}\nabla\sigma(u),\psi_h-\varepsilon(\Pi_{\rm A} v_h))
			+\iota^2\sum_{F\in \mathcal F_h}(\Pi_{\rm SZ}\nabla\sigma(u) n_F,\jump{\psi_h})_F\\
			&+(\sigma(u),\varepsilon_h(v_h-\Pi_{\rm A}v_h))-(f,v_h-\Pi_{\rm A}v_h).
		\end{align*}
		
		Using Lemma~\ref{lem:sge-H2-companion}, we have 
		\begin{align*}
			\iota^2((I-\Pi_{\rm SZ})\nabla\sigma(u),\nabla_h(\psi_h-\varepsilon(\Pi_{\rm A} v_h)))
			\lesssim \iota\|(I-\Pi_{\rm SZ})\nabla\sigma(u)\|_0\norme{\bm v_h}\\
			+\iota^2\left(\sum_{T\in \mathcal T_h}h^{-2}_T\|(I-\Pi_{\rm SZ})\nabla\sigma(u)\|^2_{0,T}\right)^{1/2}
			\norme{\bm v_h}.
		\end{align*}
		Inserting \(\varepsilon_h(v_h)\) into the second term
		\begin{align*}
			\iota^2(\operatorname{div}\Pi_{\rm SZ}\nabla\sigma(u),\psi_h-\varepsilon(\Pi_{\rm A} v_h))
			=&\iota^2(\operatorname{div}\Pi_{\rm SZ}\nabla\sigma(u),\psi_h-\varepsilon_h( v_h))\\
			&+\iota^2(\operatorname{div}\Pi_{\rm SZ}\nabla\sigma(u),\varepsilon_h( v_h)-\varepsilon(\Pi_{\rm A} v_h)),
		\end{align*}
		where \(\operatorname{div}\Pi_{\rm SZ}\nabla\sigma(u)\) is piecewise‑constant,
		\begin{align*}
			\iota^2(\operatorname{div}\Pi_{\rm SZ}\nabla\sigma(u),\psi_h-\varepsilon_h(v_h))=&\iota^2(\operatorname{div}\Pi_{\rm SZ}\nabla\sigma(u),Q_0\psi_h-\varepsilon_h(v_h))\\
			\lesssim &\iota h|\Pi_{\rm SZ}\nabla\sigma(u)|_1\norme{\bm v_h},
		\end{align*}
		with Lemma~\ref{lem:sge-discrete-korn}, we have 
		\begin{align*}
			\iota^2(\operatorname{div}\Pi_{\rm SZ}\nabla\sigma(u),\varepsilon_h( v_h)-\varepsilon(\Pi_{\rm A} v_h))
			=&\iota^2\sum_{F\in\mathcal F^i_h}(\jump{\operatorname{div}\Pi_{\rm SZ}\nabla\sigma(u)} n_F,\{v_h-\Pi_{\rm A} v_h\})_F\\
			\lesssim & \iota^2\left(\sum_{F\in \mathcal F^i_h}h_F\|\jump{\operatorname{div}\Pi_{\rm SZ}\nabla\sigma(u)}\|^2_{0,F}\right)^{1/2}\norme{\bm v_h}.
		\end{align*}
		
		Using the trace inequality, 
		\begin{align*}
			\iota^2\sum_{F\in \mathcal F_h}(\Pi_{\rm SZ}\nabla\sigma(u) n_F,\jump{\psi_h})_F
			=&\iota^2\sum_{F\in \mathcal F_h}((I-Q_{\omega_F})\Pi_{\rm SZ}\nabla\sigma(u) n_F,\jump{\psi_h})_F\\
			\lesssim &\iota^2 \sum_{F\in \mathcal F_h} h_F|\Pi_{\rm SZ}\nabla\sigma(u)|_{1,\omega_F}\|\nabla_h\psi_h\|_{0,\omega_F}\\
			\lesssim &\iota h |\Pi_{\rm SZ}\nabla\sigma(u)|_1\norme{\bm v_h}, 
		\end{align*}
		where $Q_{\omega_F}$ denotes the constant $L^2$-projection onto the patch $\omega_F$.
		Similar to the proof of \eqref{eq:variational-error} and \eqref{eq:sge-H2-companion}, 
		\[(\sigma(u),\varepsilon_h(v_h-\Pi_{\rm A}v_h))\lesssim  h|\sigma(u)|_1\norme{\bm v_h}.\]
		Finally, using \eqref{eq:sge-H2-companion}
		\[(f,v_h-\Pi_{\rm A}v_h)\lesssim h\|f\|_0\norme{\bm v_h}.\]
	\end{proof}
	
	\begin{remark}
		Assume \(u\in H^4(\Omega;\mathbb R^d)\). Under Assumptions~\ref{ass:le-regularity} and~\ref{ass:sge-regularity}, Lemmas~\ref{lem:consistence-error-small-iota} and~\ref{lem:consistency-error-low-R} yield
        \begin{align*}
            \sup_{0\ne \bm v_h\in V_{h0}^{\rm sg}}
            \frac{|a_h^{\rm sg}(\Pi_h^{\rm sg}u,\bm v_h)-(f,v_h)|}
            {\norme{\bm v_h}}\lesssim
            \min \{
            \iota^2 h|\sigma(u)|_3
			+\iota h|\sigma(u)|_2
            +h|\sigma(u)|_1+h\|f\|_0,\\
            (\iota^{1/2}+h)\norm{f}_0\}.
        \end{align*}
	\end{remark}

    \begin{theorem}
        Let \(u\in H^2_0(\Omega;\mathbb R^d)\cap H^4(\Omega;\mathbb R^d)\) and \(\bm u_h\in V^{\rm sg}_{h0}\) be the solutions of \eqref{eq:sge-weak} and \eqref{eq:sge-discrete}, respectively. Under Assumption~\ref{ass:le-regularity} and Assumption~\ref{ass:sge-regularity}, we have 
        \begin{align}
            |(u,\varepsilon(u))-\bm u_h|_{\rm E}
            \lesssim \min \{
            \iota^2 h|\sigma(u)|_3
			+\iota h|\sigma(u)|_2
            +h|\sigma(u)|_1+h\|f\|_0,\notag\\
            (\iota^{1/2}+h)\norm{f}_0\}.
        \end{align}
        The hidden constant is independent of \(h\), \(\iota\) and \(\lambda\).
    \end{theorem}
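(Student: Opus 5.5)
The estimate follows by assembling the three ingredients already established for the strain-gradient scheme. By the triangle inequality stated before Lemma~\ref{lem:sge-interpolation-error},
\[
|(u,\varepsilon(u))-\bm u_h|_{\rm E}\le |(u,\varepsilon(u))-\Pi_h^{\rm sg}u|_{\rm E}+\norme{\Pi_h^{\rm sg}u-\bm u_h}.
\]
The second term is bounded by the supremum of $a_h^{\rm sg}(\Pi_h^{\rm sg}u,\bm v_h)+s_h^{\rm sg}(\Pi_h^{\rm sg}u,\bm v_h)-(f,v_h)$ over $\norme{\bm v_h}$. We split this into a consistency part and a stabilization part.

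\textbf{Step 1 (interpolation term).} Lemma~\ref{lem:sge-interpolation-error} with $s=1$ controls the interpolation term by the minimum of two bounds. The first bound is
\[
h\iota(|u|_3+\sqrt\lambda|\operatorname{div}u|_2)+h(|u|_2+\sqrt\lambda|\operatorname{div}u|_1),
\]
and the second is $(\iota^{1/2}+h)\norm{f}_0$. To match the stated form, the $\lambda$-weighted quantities must be absorbed into seminorms of $\sigma(u)$. Since $\sigma(u)=2\mu\varepsilon(u)+\lambda\operatorname{div}u\,I$, we use $|\varepsilon(u)|_k\lesssim|\sigma(u)|_k$ and $\lambda|\operatorname{div}u|_k\lesssim|\sigma(u)|_k$. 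These follow by taking the trace and the deviatoric part: $\operatorname{tr}\sigma=(2\mu+d\lambda)\operatorname{div}u$, and $\operatorname{dev}\sigma=2\mu\operatorname{dev}\varepsilon$.

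The factor $\sqrt\lambda$ is the delicate point. Here I would bound
\[
\sqrt\lambda|\operatorname{div}u|_k\le (|\operatorname{div}u|_k+\lambda|\operatorname{div}u|_k),
\]
and treat the two cases $\lambda\le1$ and $\lambda>1$ separately. In both cases the quantity is $\lesssim|\sigma(u)|_k$, because $|\operatorname{div}u|_k\lesssim|\varepsilon(u)|_k$. Also $|u|_{k+1}\lesssim|\varepsilon(u)|_k$ holds via the identity $\partial_j\partial_k u_i=\partial_j\varepsilon_{ik}+\partial_k\varepsilon_{ij}-\partial_i\varepsilon_{jk}$, valid for $k\ge1$. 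This yields a first bound of $\iota h|\sigma(u)|_2+h|\sigma(u)|_1$.

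\textbf{Step 2 (stabilization).} Lemma~\ref{lem:stabilization-error} with $s=1$ gives the bound $\min\{h\iota|u|_3+h|u|_2,(\iota^{1/2}+h)\norm{f}_0\}$. By the same seminorm comparison, this is $\lesssim\min\{\iota h|\sigma(u)|_2+h|\sigma(u)|_1,(\iota^{1/2}+h)\norm{f}_0\}$.

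\textbf{Step 3 (consistency).} The remark following Lemma~\ref{lem:consistency-error-low-R} already gives
\[
\min\{\iota^2h|\sigma(u)|_3+\iota h|\sigma(u)|_2+h|\sigma(u)|_1+h\norm{f}_0,\ (\iota^{1/2}+h)\norm{f}_0\}.
\]
This remark combines two lemmas. Lemma~\ref{lem:consistency-error-low-R} uses Scott--Zhang approximation of $\nabla\sigma(u)\in H^2$ together with jump estimates for $\operatorname{div}\Pi_{\rm SZ}\nabla\sigma(u)$, which produce the $\iota^2h|\sigma(u)|_3$ term. Lemma~\ref{lem:consistence-error-small-iota} gives $\iota|\sigma(u)|_1+\|\sigma(u-u_0)\|_0+h\|f\|_0$, and Assumption~\ref{ass:sge-regularity} converts this to $(\iota^{1/2}+h)\norm{f}_0$. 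The conversion uses $\iota|\sigma(u)|_1\lesssim\iota|u|_2+\lambda\iota\|\operatorname{div}u\|_1\lesssim\iota^{1/2}\norm{f}_0$ and $\|\sigma(u-u_0)\|_0\lesssim|u-u_0|_1+\lambda\|\operatorname{div}(u-u_0)\|_0\lesssim\iota^{1/2}\norm{f}_0$.

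\textbf{Assembly.} Summing Steps 1--3, and noting that a sum of terms each bounded by a minimum of $A_i$ and $B_i$ is bounded by $\min\{\sum A_i,\sum B_i\}$, gives the claim. The constants are independent of $\lambda$ because every appearance of $\lambda$ was absorbed either into $\sigma(u)$ or into the assumptions.

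The main obstacle I expect is the $\sqrt\lambda$ absorption in Step 1. The bound is uniform only because $|\operatorname{div}u|_k$ is itself controlled by $|\sigma(u)|_k$ independently of $\lambda$. I would verify this explicitly through the deviatoric/trace decomposition of $\sigma$ for all $\lambda\ge0$, including the case $\lambda\to0$.
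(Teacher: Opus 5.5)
Your proposal is correct and follows the same route as the paper, which gives no separate proof and leaves the theorem as the direct combination of the triangle inequality, Lemma~\ref{lem:sge-interpolation-error} and Lemma~\ref{lem:stabilization-error} with $s=1$, and the consistency bound in the remark after Lemma~\ref{lem:consistency-error-low-R}. Your explicit conversion of $|u|_{k+1}$ and $\sqrt{\lambda}|\operatorname{div}u|_k$ into $|\sigma(u)|_k$ through the trace/deviator splitting supplies a step the paper leaves implicit, and it is indeed uniform for all $\lambda\ge 0$.
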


	\section{Numerical experiments}
	\label{sec:numerical-experiments}
	We first examine the convergence of \eqref{eq:sge-discrete} and its robustness
	with respect to the size parameter \(\iota\) and the Lam\'e constant \(\lambda\).
	Let \(\Omega=(0,1)^2\), set \(\mu=1\), and use uniformly refined triangular
	meshes. In the first two tests, the prescribed displacement is divergence-free;
	hence the right-hand side \(f\), computed from \eqref{eq:sge-model}, is
	independent of \(\lambda\). We report the relative energy error
	\[
	\operatorname{Err}_{\rm E}
	:=
	\frac{|(u,\varepsilon(u))-\bm u_h|_{\rm E}}
	{|(u,\varepsilon(u))|_{\rm E}}.
	\]
	
	The first test uses the following smooth solution without boundary layers:
	\[
	u(x,y)
	=\operatorname{curl}(\sin^3(\pi x)\sin^3(\pi y))
	=
	\begin{pmatrix}
		3\pi \sin^3(\pi x)\sin^2(\pi y)\cos(\pi y)\\[1mm]
		-3\pi \sin^3(\pi y)\sin^2(\pi x)\cos(\pi x)
	\end{pmatrix},
	\]
	for which \(\operatorname{div}u=0\). Tables~\ref{tab:divfree-smooth-lam1}
	and~\ref{tab:divfree-smooth-lam1e6} list \(\operatorname{Err}_{\rm E}\) for
	\(\lambda=1\) and \(10^6\), respectively.
	
	\begin{table}[htbp]
		\centering
		\caption{Relative energy errors \(\operatorname{Err}_{\rm E}\) for the smooth solution with \(\lambda=1\).}
		\label{tab:divfree-smooth-lam1}
		\begin{tabular}{c|cccccc}
			\hline
			$\iota\backslash h$ & $1/2$ & $1/4$ & $1/8$ & $1/16$ & $1/32$ & $1/64$ \\
			\hline
			$1$        & $3.216{\rm e}{+0}$ & $1.253{\rm e}{+0}$ & $6.585{\rm e}{-1}$ & $4.921{\rm e}{-1}$ & $3.355{\rm e}{-1}$ & $1.682{\rm e}{-1}$ \\
			rate       &                    & $1.36$              & $0.93$              & $0.42$              & $0.55$              & $1.00$              \\
			$10^{-2}$  & $8.501{\rm e}{-1}$ & $6.340{\rm e}{-1}$ & $3.254{\rm e}{-1}$ & $1.607{\rm e}{-1}$ & $7.897{\rm e}{-2}$ & $3.861{\rm e}{-2}$ \\
			rate       &                    & $0.42$              & $0.96$              & $1.02$              & $1.03$              & $1.03$              \\
			$10^{-4}$  & $8.450{\rm e}{-1}$ & $6.284{\rm e}{-1}$ & $3.195{\rm e}{-1}$ & $1.578{\rm e}{-1}$ & $7.858{\rm e}{-2}$ & $3.924{\rm e}{-2}$ \\
			rate       &                    & $0.43$              & $0.98$              & $1.02$              & $1.01$              & $1.00$              \\
			$10^{-6}$  & $8.450{\rm e}{-1}$ & $6.284{\rm e}{-1}$ & $3.195{\rm e}{-1}$ & $1.578{\rm e}{-1}$ & $7.858{\rm e}{-2}$ & $3.924{\rm e}{-2}$ \\
			rate       &                    & $0.43$              & $0.98$              & $1.02$              & $1.01$              & $1.00$              \\
			$10^{-8}$  & $8.450{\rm e}{-1}$ & $6.284{\rm e}{-1}$ & $3.195{\rm e}{-1}$ & $1.578{\rm e}{-1}$ & $7.858{\rm e}{-2}$ & $3.924{\rm e}{-2}$ \\
			rate       &                    & $0.43$              & $0.98$              & $1.02$              & $1.01$              & $1.00$              \\
			\hline
		\end{tabular}
	\end{table}
	
	\begin{table}[htbp]
		\centering
		\caption{Relative energy errors \(\operatorname{Err}_{\rm E}\) for the smooth solution with \(\lambda=10^6\).}
		\label{tab:divfree-smooth-lam1e6}
		\begin{tabular}{c|cccccc}
			\hline
			$\iota\backslash h$ & $1/2$ & $1/4$ & $1/8$ & $1/16$ & $1/32$ & $1/64$ \\
			\hline
			$1$        & $2.868{\rm e}{+0}$ & $1.112{\rm e}{+0}$ & $6.451{\rm e}{-1}$ & $4.896{\rm e}{-1}$ & $3.356{\rm e}{-1}$ & $1.717{\rm e}{-1}$ \\
			rate       &                    & $1.37$              & $0.79$              & $0.40$              & $0.54$              & $0.97$              \\
			$10^{-2}$  & $8.538{\rm e}{-1}$ & $6.279{\rm e}{-1}$ & $3.208{\rm e}{-1}$ & $1.580{\rm e}{-1}$ & $7.747{\rm e}{-2}$ & $3.795{\rm e}{-2}$ \\
			rate       &                    & $0.44$              & $0.97$              & $1.02$              & $1.03$              & $1.03$              \\
			$10^{-4}$  & $8.488{\rm e}{-1}$ & $6.221{\rm e}{-1}$ & $3.148{\rm e}{-1}$ & $1.550{\rm e}{-1}$ & $7.702{\rm e}{-2}$ & $3.843{\rm e}{-2}$ \\
			rate       &                    & $0.45$              & $0.98$              & $1.02$              & $1.01$              & $1.00$              \\
			$10^{-6}$  & $8.488{\rm e}{-1}$ & $6.221{\rm e}{-1}$ & $3.148{\rm e}{-1}$ & $1.550{\rm e}{-1}$ & $7.702{\rm e}{-2}$ & $3.843{\rm e}{-2}$ \\
			rate       &                    & $0.45$              & $0.98$              & $1.02$              & $1.01$              & $1.00$              \\
			$10^{-8}$  & $8.488{\rm e}{-1}$ & $6.221{\rm e}{-1}$ & $3.148{\rm e}{-1}$ & $1.550{\rm e}{-1}$ & $7.702{\rm e}{-2}$ & $3.843{\rm e}{-2}$ \\
			rate       &                    & $0.45$              & $0.98$              & $1.02$              & $1.01$              & $1.00$              \\
			\hline
		\end{tabular}
	\end{table}
	
	On the refined meshes, the errors converge at approximately first order for
	all tested values of \(\iota\). They do not deteriorate as \(\iota\) decreases
	and are nearly identical for \(\iota\le10^{-4}\); changing \(\lambda\) from
	\(1\) to \(10^6\) also has negligible effect. This supports the expected
	parameter-robust convergence and locking-free behavior for this
	divergence-free test.
	
	The second test uses the following \(\iota\)-dependent solution with boundary
	layers:
	\[
	u(x,y)
	=\operatorname{curl}(g_1(x)^2g_1(y)^2)
	=
	\begin{pmatrix}
		2g_1(x)^2 g_1(y)g_1'(y)\\[1mm]
		-2g_1(x)g_1'(x)g_1(y)^2
	\end{pmatrix},
	\]
	where
	\[
	g_1(x)
	=
	e^{\sin(\pi x)}-1
	-
	\pi\iota
	\frac{
		1+e^{-1/\iota}-e^{-x/\iota}-e^{-(1-x)/\iota}
	}{
		1-e^{-1/\iota}
	}.
	\]
	This displacement satisfies \(\operatorname{div}u=0\) and the clamped boundary
	conditions, while the exponential terms in \(g_1\) generate boundary layers near
	\(\partial\Omega\) as \(\iota\to0\). Tables~\ref{tab:divfree-layer-lam1}
	and~\ref{tab:divfree-layer-lam1e6} report \(\operatorname{Err}_{\rm E}\). For \(\lambda=1\), Figure~\ref{fig:boundary-layers}
	shows the six independent components of \(\nabla\varepsilon(u)\) for
	\(\iota=10^{-2}\). 
	
	\begin{figure}[htbp]
		\centering
		\includegraphics[width=0.75\textwidth]{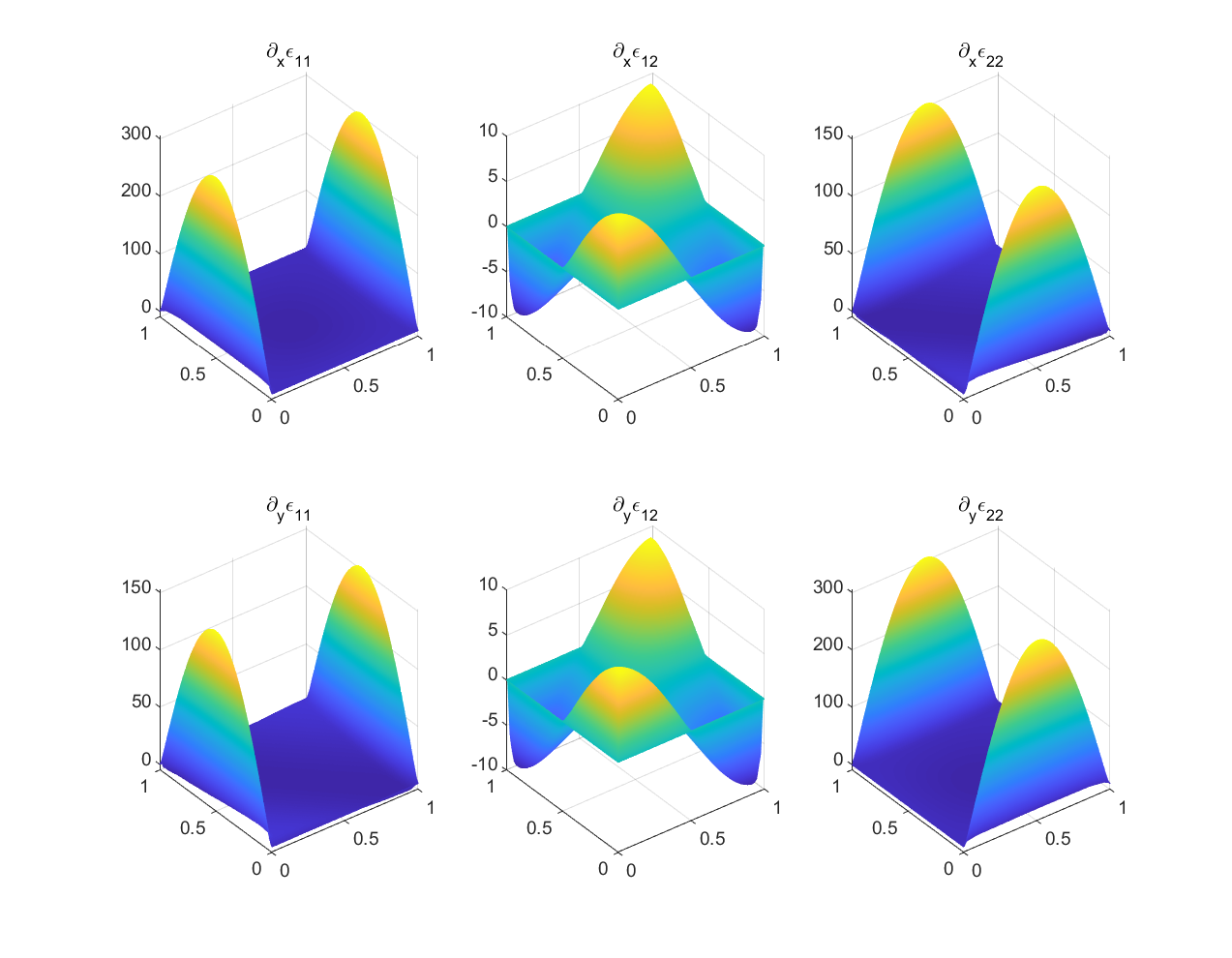}
		\caption{
			Boundary-layer profiles of the six independent components of
			\(\nabla\varepsilon(u)\) for the divergence-free
			boundary-layer solution with $\iota=10^{-2}$.
		}
		\label{fig:boundary-layers}
	\end{figure}
	
	\begin{table}[H]
		\centering
		\caption{Relative energy errors \(\operatorname{Err}_{\rm E}\) for the boundary-layer solution with \(\lambda=1\).}
		\label{tab:divfree-layer-lam1}
		\begin{tabular}{c|cccccc}
			\hline
			$\iota\backslash h$ & $1/2$ & $1/4$ & $1/8$ & $1/16$ & $1/32$ & $1/64$ \\
			\hline
			$1$        & $3.430{\rm e}{+0}$ & $1.809{\rm e}{+0}$ & $8.989{\rm e}{-1}$ & $6.999{\rm e}{-1}$ & $5.002{\rm e}{-1}$ & $2.620{\rm e}{-1}$ \\
			rate       &                    & $0.92$              & $1.01$              & $0.36$              & $0.48$              & $0.93$              \\
			$10^{-2}$  & $8.123{\rm e}{-1}$ & $6.267{\rm e}{-1}$ & $3.216{\rm e}{-1}$ & $1.639{\rm e}{-1}$ & $8.802{\rm e}{-2}$ & $4.947{\rm e}{-2}$ \\
			rate       &                    & $0.37$              & $0.96$              & $0.97$              & $0.90$              & $0.83$              \\
			$10^{-4}$  & $7.990{\rm e}{-1}$ & $6.115{\rm e}{-1}$ & $3.036{\rm e}{-1}$ & $1.501{\rm e}{-1}$ & $7.479{\rm e}{-2}$ & $3.736{\rm e}{-2}$ \\
			rate       &                    & $0.39$              & $1.01$              & $1.02$              & $1.01$              & $1.00$              \\
			$10^{-6}$  & $7.989{\rm e}{-1}$ & $6.115{\rm e}{-1}$ & $3.036{\rm e}{-1}$ & $1.501{\rm e}{-1}$ & $7.478{\rm e}{-2}$ & $3.735{\rm e}{-2}$ \\
			rate       &                    & $0.39$              & $1.01$              & $1.02$              & $1.00$              & $1.00$              \\
			$10^{-8}$  & $7.989{\rm e}{-1}$ & $6.115{\rm e}{-1}$ & $3.036{\rm e}{-1}$ & $1.501{\rm e}{-1}$ & $7.478{\rm e}{-2}$ & $3.735{\rm e}{-2}$ \\
			rate       &                    & $0.39$              & $1.01$              & $1.02$              & $1.00$              & $1.00$              \\
			\hline
		\end{tabular}
	\end{table}
	
	\begin{table}[H]
		\centering
		\caption{Relative energy errors \(\operatorname{Err}_{\rm E}\) for the boundary-layer solution with \(\lambda=10^6\).}
		\label{tab:divfree-layer-lam1e6}
		\begin{tabular}{c|cccccc}
			\hline
			$\iota\backslash h$ & $1/2$ & $1/4$ & $1/8$ & $1/16$ & $1/32$ & $1/64$ \\
			\hline
			$1$        & $3.422{\rm e}{+0}$ & $1.571{\rm e}{+0}$ & $8.541{\rm e}{-1}$ & $6.852{\rm e}{-1}$ & $4.985{\rm e}{-1}$ & $2.686{\rm e}{-1}$ \\
			rate       &                    & $1.12$              & $0.88$              & $0.32$              & $0.46$              & $0.89$              \\
			$10^{-2}$  & $8.152{\rm e}{-1}$ & $6.205{\rm e}{-1}$ & $3.166{\rm e}{-1}$ & $1.612{\rm e}{-1}$ & $8.640{\rm e}{-2}$ & $4.837{\rm e}{-2}$ \\
			rate       &                    & $0.39$              & $0.97$              & $0.97$              & $0.90$              & $0.84$              \\
			$10^{-4}$  & $8.021{\rm e}{-1}$ & $6.052{\rm e}{-1}$ & $2.987{\rm e}{-1}$ & $1.473{\rm e}{-1}$ & $7.325{\rm e}{-2}$ & $3.657{\rm e}{-2}$ \\
			rate       &                    & $0.41$              & $1.02$              & $1.02$              & $1.01$              & $1.00$              \\
			$10^{-6}$  & $8.021{\rm e}{-1}$ & $6.052{\rm e}{-1}$ & $2.987{\rm e}{-1}$ & $1.473{\rm e}{-1}$ & $7.325{\rm e}{-2}$ & $3.657{\rm e}{-2}$ \\
			rate       &                    & $0.41$              & $1.02$              & $1.02$              & $1.01$              & $1.00$              \\
			$10^{-8}$  & $8.021{\rm e}{-1}$ & $6.052{\rm e}{-1}$ & $2.987{\rm e}{-1}$ & $1.473{\rm e}{-1}$ & $7.325{\rm e}{-2}$ & $3.657{\rm e}{-2}$ \\
			rate       &                    & $0.41$              & $1.02$              & $1.02$              & $1.01$              & $1.00$              \\
			\hline
		\end{tabular}
	\end{table}
	
	Despite the boundary layers, the errors approach first-order convergence on
	the refined meshes. In particular, the results for
	\(\iota=10^{-4},10^{-6},10^{-8}\) are nearly identical, and increasing
	\(\lambda\) from \(1\) to \(10^6\) has no visible adverse effect. The test
	therefore supports the robustness of \eqref{eq:sge-discrete} with respect to
	both parameters in the boundary-layer regime.
	
	We consider the strain-gradient Kirsch problem in
	\cite{khakalo2017gradient} to investigate the size-dependent stress
	concentration around a circular hole. The original problem is posed on an infinite plate containing a circular
	hole of radius \(a\). There is no body force, so \(f=0\). The plate is subjected to a uniform remote tensile stress \(\sigma_\infty\) in the vertical direction. We truncate the plate to the quarter-domain
	\[
	\Omega=(0,L)^2\setminus\overline{B_a},
	\qquad
	B_a:=\{x\in\mathbb R^2:|x|<a\},
	\qquad
	\frac{L}{a}=100;
	\]
	see Figure~\ref{fig:kirsch-mesh}. To resolve the large stress variations near \(\partial B_a\) without an
	excessive number of degrees of freedom, we use a graded triangular mesh that
	is fine along the circular boundary and becomes progressively coarser toward
	the outer boundaries. The computational domain is divided into 18473 elements, with the mesh size set to \(0.001a\) near the hole and increased away from the hole with a growth rate of \(1.1\).
	\begin{figure}[htbp]
		\centering
		\includegraphics[width=0.78\textwidth]{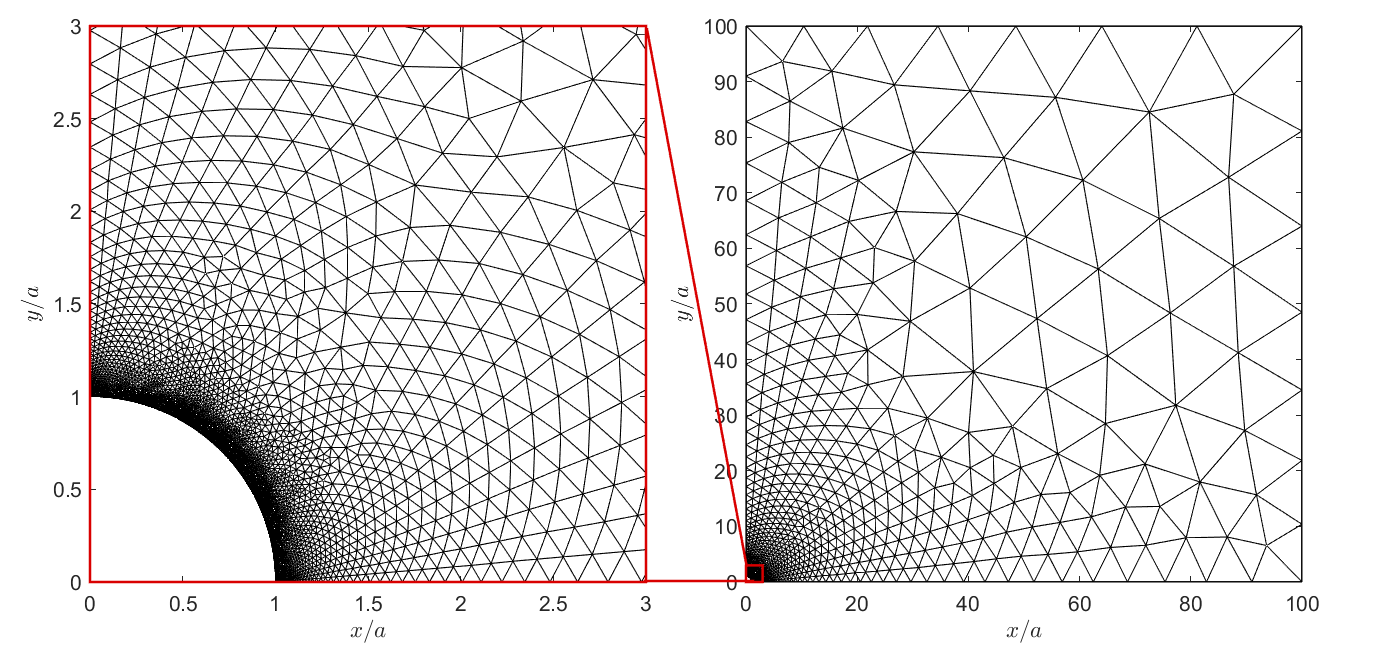}
		\caption{Graded mesh for the truncated quarter-domain: the near-hole
			region (left) and the full computational domain (right).}
		\label{fig:kirsch-mesh}
	\end{figure}
	
	
	On \((\{0\}\times(a,L))\cup((a,L)\times\{0\})\), we impose the symmetry conditions \(u\cdot n=0\) and \(\partial_n(u\cdot t)=0\), where \(t\) and \(n\) denote the unit tangent and unit outward normal, respectively. Since \(u\cdot n=0\) there, we have \(\partial_t(u\cdot n)=0\). Then \(2n^T\varepsilon(u)t=\partial_n(u\cdot t)+\partial_t(u\cdot n)=0.\) Natural boundary conditions are imposed on the remaining boundaries. To test the size effect
	, we introduce the stress concentration factor. Let \(\sigma:=\sigma(u)\). Then
	\[
	\begin{aligned}
		\sigma=&\sigma_{xx} (e_x\otimes e_x)+\sigma_{xy} (e_x\otimes e_y+e_y\otimes e_x)+\sigma_{yy} (e_y\otimes e_y)\\
	\end{aligned}\]
	where $e_x$ and $e_y$ denote the unit vectors along the $x$- and $y$-axes, respectively. We
	define
	\[
	K_t:=\frac{\sigma_{yy}(a,0)}{\sigma_{\infty}}.
	\]
	For the classical Kirsch problem, \(K_t=3\), independently of \(E\) and
	\(\nu\).
	
	Accordingly, we introduce the discrete space
	\[
	\begin{aligned}
		V_h^{\rm sg,K}
		:=\biggl\{
		(v_h,\psi_h)\in V_h^{\rm sg}
		:\int_F v_h\cdot n\mathrm{d}s=0,\quad\int_Fn^T\psi_ht\mathrm{d}s=0,\\
		\forall F\in \mathcal F_h,~F\subset \bigl(\{0\}\times(a,L)\cup (a,L)\times \{0\}\bigr).
		\biggr\}.
	\end{aligned}
	\]
	We then discretize the Kirsch problem by the CR--CR scheme: find \(\bm u_h\in V_h^{\rm sg,K}\) such that
	\[a_h^{\rm sg}(\bm u_h,\bm v_h)
	+
	s_h^{\rm sg}(\bm u_h,\bm v_h)
	=
	\int_{(0,L)\times\{L\}} (\sigma_\infty n)\cdot v_h\mathrm{d}s,
	\qquad
	\forall
	\bm v_h=(v_h,\psi_h)\in V_h^{\rm sg,K}.\]
	It is easy to verify that $\norme{\cdot}$ defines a norm on the space $V_h^{\rm sg,K}$, and hence the method is well-posed.
	
	Let Young's modulus \(E=210~\rm{GPa}\), Poisson's ratio \(\nu=0.3\) and remote tensile stress \(\sigma_\infty=0.1~\rm{GPa}\). The Lam\'e parameters appearing in the plane-stress setting are consequently
	\[
	\mu=\frac{E}{2(1+\nu)}
	\approx80.7692\ {\rm GPa},
	\qquad
	\lambda=\frac{E\nu}{1-\nu^2}
	\approx69.2308\ {\rm GPa}.
	\]
	
	Table~\ref{tab:kirsch-truncation} reports the computed \(K_t\) for \(\iota/a=10^{-6}\) with different truncation sizes
	\(L/a\). The purpose is to examine the influence of replacing the infinite
	plate by a finite computational domain. As shown in
	Table~\ref{tab:kirsch-truncation}, the value of \(K_t\) becomes essentially
	insensitive to the truncation size for \(L/a\ge100\). Therefore,
	\(L/a=100\) is adopted in the following computations.
	
	\begin{table}[htbp]
		\centering
		\caption{Effect of the truncation size on the stress concentration factor
			for \(\iota/a=10^{-6}\).}
		\label{tab:kirsch-truncation}
		\begin{tabular}{c|cccccccc}
			\hline
			\(L/a\) & 5 & 10 & 25 & 50 & 100 & 150 & 200 & 300 \\
			\hline
			\(K_t\) &
			3.350 &
			3.079 &
			3.006 &
			2.997 &
			2.994 &
			2.993 &
			2.993 &
			2.993 \\
			\hline
		\end{tabular}
	\end{table}

	Figure~\ref{fig:kirsch-Kt} compares the analytical and CR--CR values
	of \(K_t\) for different values of \(\iota/a\); let \(\sigma_h=2\mu\phi_h+\lambda\operatorname{tr}(\phi_h)I\), the discrete value at
	\((a,0)\) is obtained by local recovery from the neighboring CR elements. The stress concentration factor \(K_t\) decreases from its classical limit \(3\) as \(\iota/a\) increases. In particular, for \(\iota/a=10^{-6}\), the CR--CR solution gives \(K_t=2.99410520\) at \((a,0)\), compared with the classical value \(3\).
	
	\begin{figure}[htbp]
		\centering
		\includegraphics[width=0.62\textwidth]{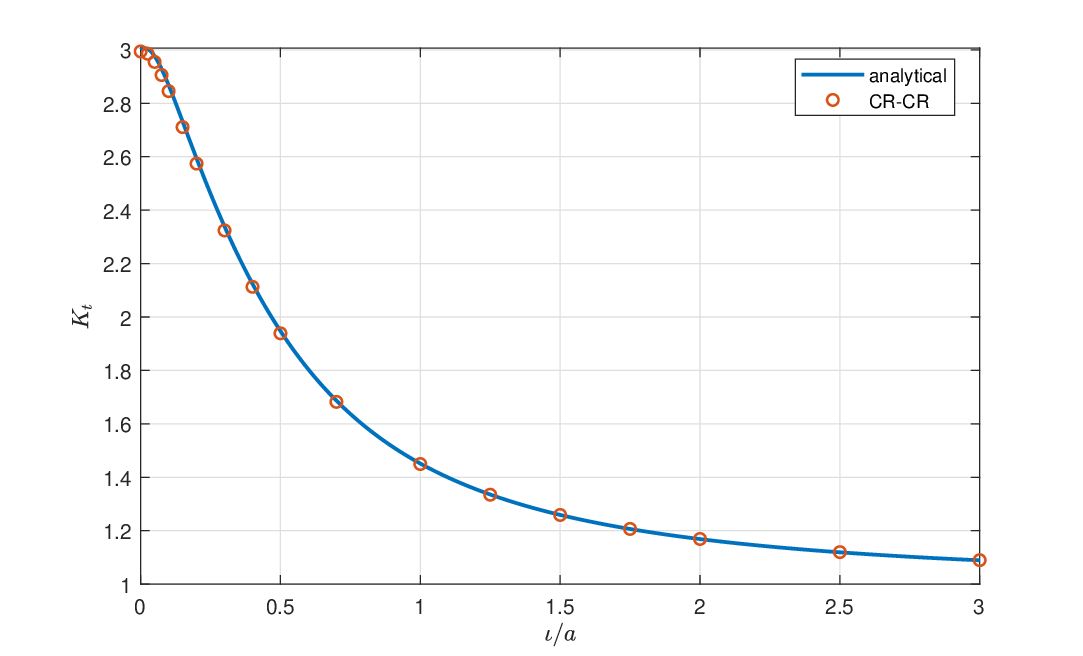}
		\caption{Stress concentration factor \(K_t\) versus \(\iota/a\). The solid
			line denotes the analytical solution and the markers denote the CR--CR
			approximation.}
		\label{fig:kirsch-Kt}
	\end{figure}
	
	We next compare the normalized stress components \(\sigma_{xx}/\sigma_{\infty}\) and
	\(\sigma_{yy}/\sigma_{\infty}\) along the positive
	\(x\)-axis: \(y=0\) and \(1\le x/a\le3\), for \(\iota/a=10^{-6},0.5,1\). In Figure~\ref{fig:kirsch-line-stress}, the analytical profiles are shown by continuous curves, while the CR--CR values are evaluated directly from the boundary-edge degrees of freedom.  The numerical results agree well with the analytical profiles and capture the pronounced stress variation near the hole.
	
	\begin{figure}[htbp]
		\centering
		\includegraphics[width=\textwidth]{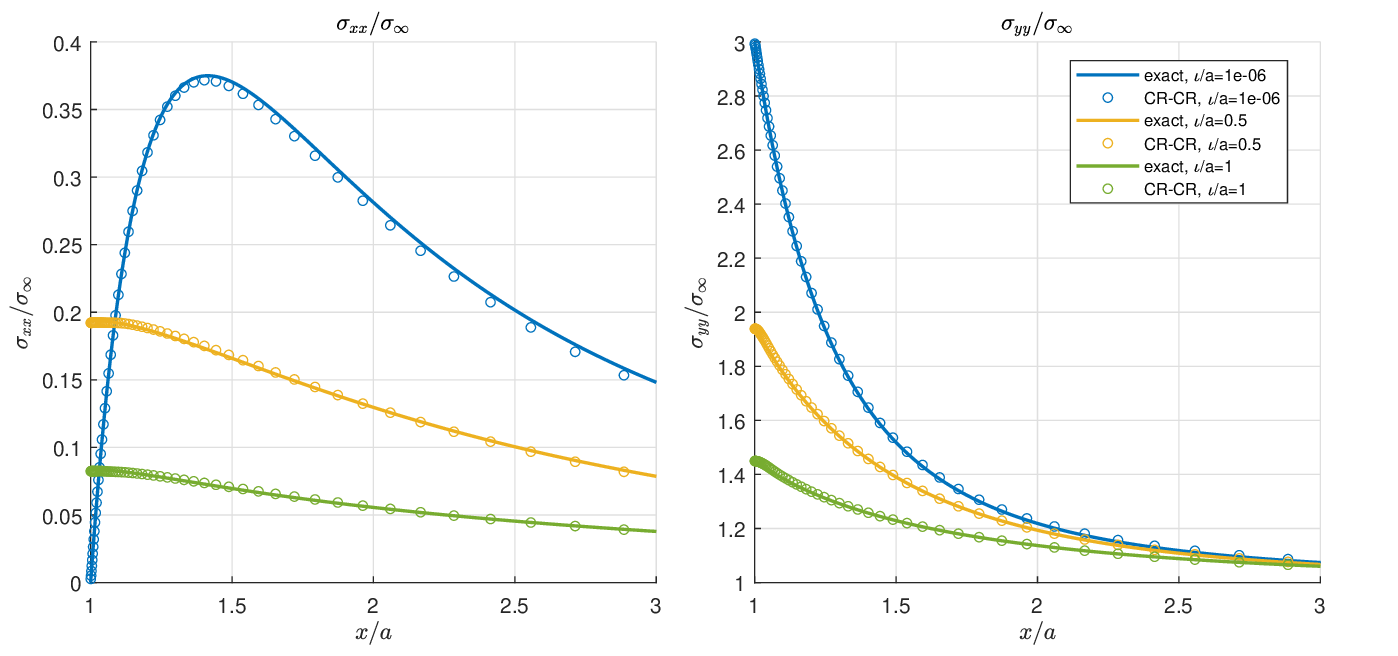}
		\caption{Normalized stress components along the positive \(x\)-axis for
			\(\iota/a=10^{-6},0.5,1\). From left to right:
			\(\sigma_{xx}/\sigma_{\infty}\) and
			\(\sigma_{yy}/\sigma_{\infty}\). Solid lines represent the analytical
			solution and markers represent the CR--CR approximation.}
		\label{fig:kirsch-line-stress}
	\end{figure}
	
	For each component, the analytical and numerical fields are shown for
	\(\iota/a=10^{-6},0.5,1\); see Figures~\ref{fig:kirsch-sxx}--\ref{fig:kirsch-syy}.
	Each column corresponds to one value of \(\iota/a\), with the analytical
	solution, the CR--CR solution, and their absolute error displayed from top
	to bottom.  The numerical results are in good agreement with the analytical solutions and clearly reproduce the size effect.
	
	\begin{figure}[htbp]
		\centering
		\includegraphics[width=\textwidth]{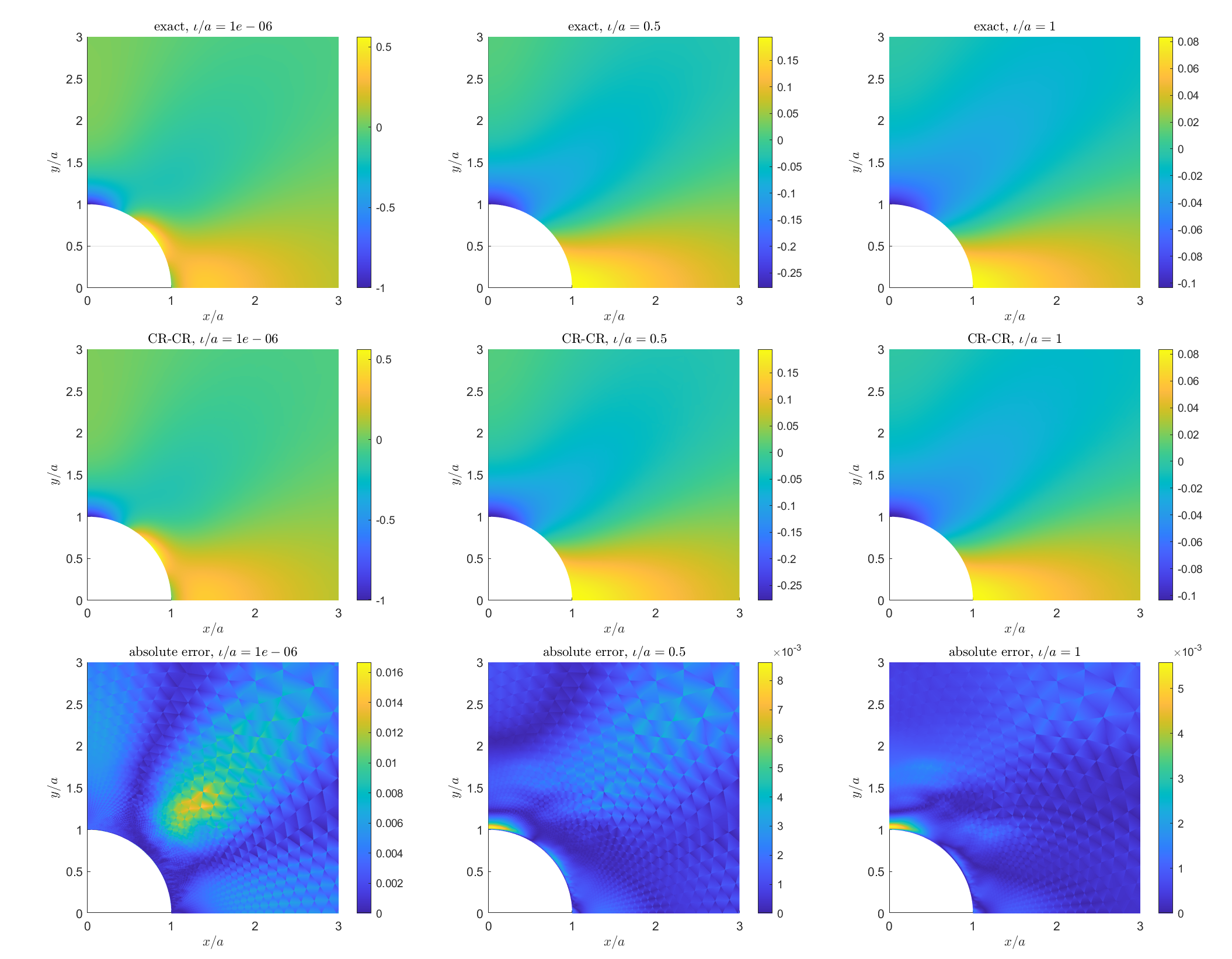}
		\caption{Normalized stress \(\sigma_{xx}/\sigma_{\infty}\) for
			\(\iota/a=10^{-6},0.5,1\): analytical and CR--CR solutions and absolute error.}
		\label{fig:kirsch-sxx}
	\end{figure}
	
	\begin{figure}[htbp]
		\centering
		\includegraphics[width=\textwidth]{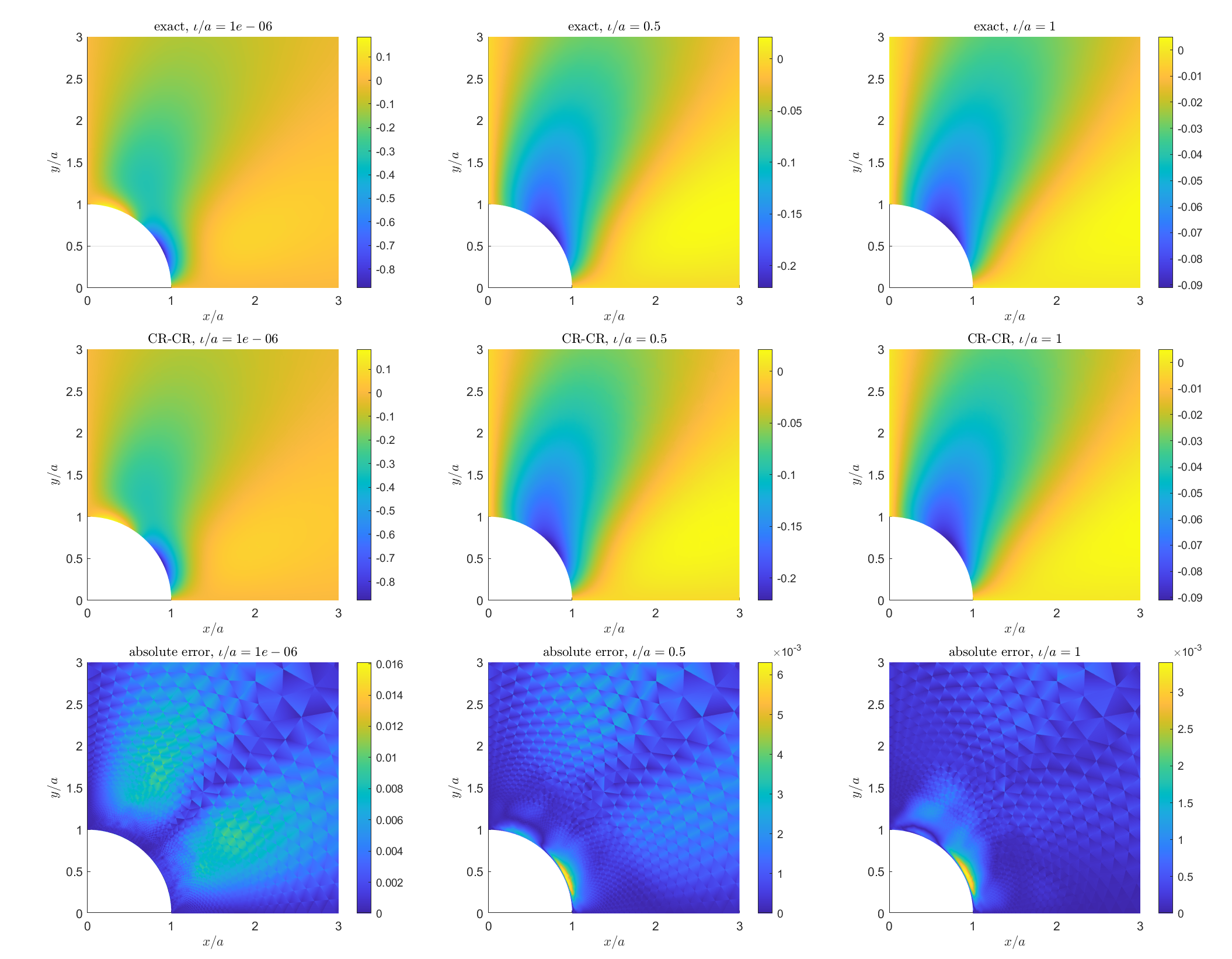}
		\caption{Normalized stress \(\sigma_{xy}/\sigma_{\infty}\) for
			\(\iota/a=10^{-6},0.5,1\): analytical and CR--CR solutions and absolute error.}
		\label{fig:kirsch-sxy}
	\end{figure}
	
	\begin{figure}[htbp]
		\centering
		\includegraphics[width=\textwidth]{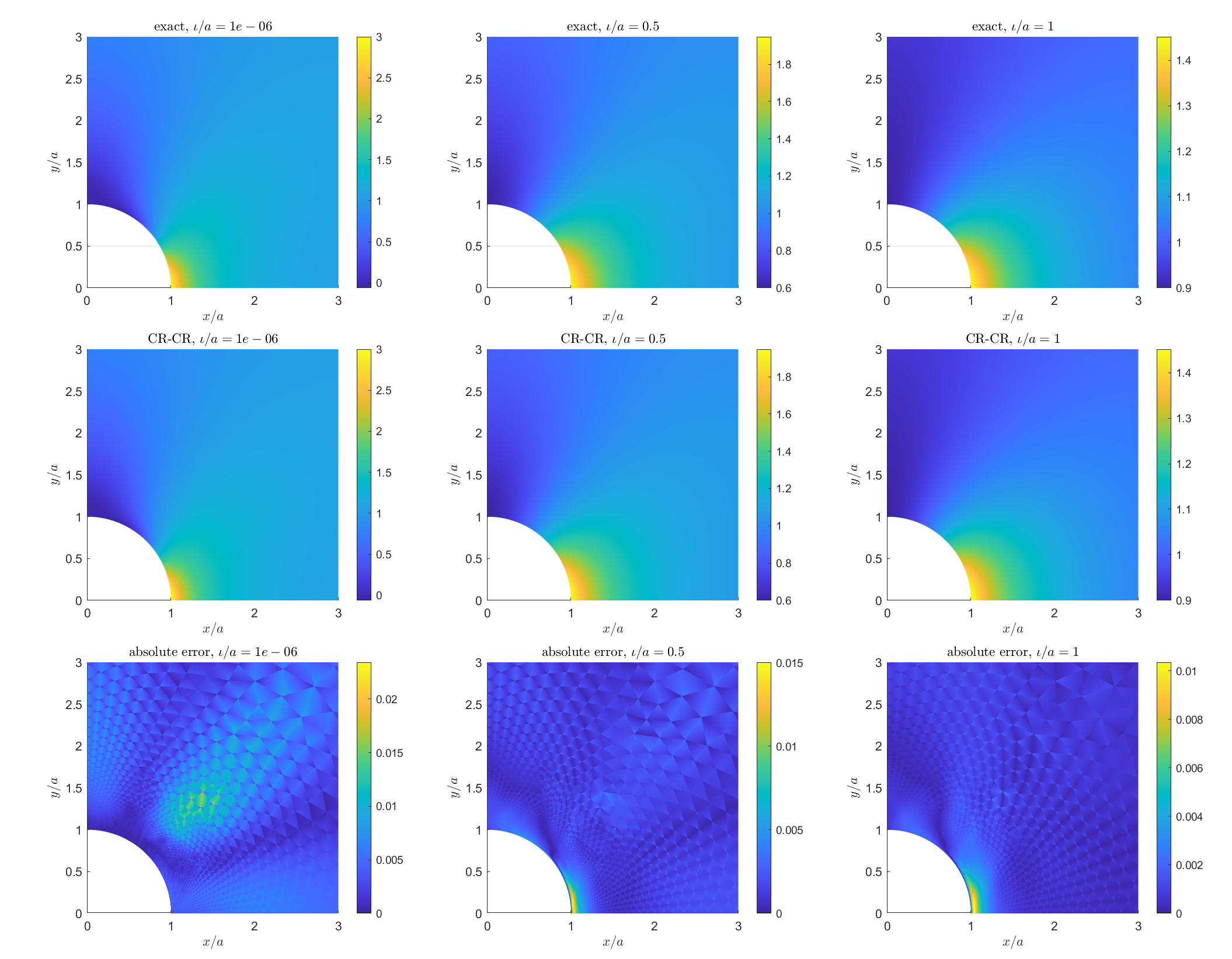}
		\caption{Normalized stress \(\sigma_{yy}/\sigma_{\infty}\) for
			\(\iota/a=10^{-6},0.5,1\): analytical and CR--CR solutions and absolute error.}
		\label{fig:kirsch-syy}
	\end{figure}
	
	We finally consider the end-loaded cantilever beam studied in
	\cite{lazopoulos2010bending}.  Let
	\[
	\Omega=(0,L)\times\left(-\frac b2,\frac b2\right)
	\times\left(-\frac c2,\frac c2\right).
	\]
	The left end is hard-clamped, and the beam is subjected to the constant
	surface force
	\begin{align*}
		u=\partial_xu=0
		\quad\text{on }\{0\}\times
		\left(-\frac b2,\frac b2\right)
		\times\left(-\frac c2,\frac c2\right),\\
		f_{\rm L}=(0,0,-f_z)^T\quad\text{on }\{L\}\times
		\left(-\frac b2,\frac b2\right)
		\times\left(-\frac c2,\frac c2\right).
	\end{align*}
	The boundary conditions on the remaining faces are all homogeneous natural boundary conditions. There are no body forces.
	
	Since \(u=0\) on the fixed planar face, all its tangential derivatives
	vanish there; consequently, \(\varepsilon(u)=0\) on this face is
	equivalent to the additional hard-clamping condition
	\(\partial_xu=0\).  We therefore introduce
	\begin{align*}
		V_h^{\rm sg,beam}
		:=
		\bigg\{
		(v_h,\psi_h)\in V_h^{\rm sg}:
		\int_Fv_h\,\mathrm ds=0,\quad
		\int_F\psi_h\,\mathrm ds=0,\\
		\forall F\in\mathcal F_h^\partial,\,
		F\subset\{0\}\times \left(-\frac b2,\frac b2\right)\times\left(-\frac c2,\frac c2\right)
		\bigg\}.
	\end{align*}
	The CR--CR approximation is the ordered pair
	\(\bm u_h=(u_h,\phi_h)\in V_h^{\rm sg,beam}\) satisfying
	\begin{equation}
		a_h^{\rm sg}(\bm u_h,\bm v_h)
		+s_h^{\rm sg}(\bm u_h,\bm v_h)
		=\int_{\{L\}\times
			\left(-\frac b2,\frac b2\right)
			\times\left(-\frac c2,\frac c2\right)}f_{\rm L}\cdot v_h\mathrm{d}s
		\quad
		\forall\bm v_h=(v_h,\psi_h)\in V_h^{\rm sg,beam}.
		\label{eq:beam-discrete}
	\end{equation}
	The constraints on the fixed end eliminate the rigid-body motions.  Hence, \(\norme{\cdot}\) is a norm on \(V_h^{\rm sg,beam}\), and \eqref{eq:beam-discrete} is well-posed.
	
	To validate the reliability of the discrete solution, we define the following free-end deflection
	\begin{equation*}
		\delta_h
		:=\left|
		\frac{1}{bc}
		\int_{\{L\}\times(-b/2,b/2)\times(-c/2,c/2)}
		u_{h}\cdot e_z\,\mathrm ds
		\right|,
	\end{equation*}
	where \(e_z\) denotes the unit vector along the positive \(z\)-axis. Correspondingly, the reference value \(\delta_{\rm L}\) can be derived from \cite[Section 4.2]{lazopoulos2010bending}, and \(\delta_{\rm P}\) can be derived from \cite[Section~6]{papargyri2003bending}; see Appendix~\ref{app:lazopoulos-reference} for the detailed derivation.
	
	For the test calculations we consider beams made of epoxy with typical properties \(E=3.5~\mathrm{GPa}\) and \(\nu =0.35\), and  take
	\[
	L=20~\mu\mathrm{m},\qquad b=c=1~\mu\mathrm{m},\qquad
	f_z=1~\mathrm{MPa}.
	\]
	The Lam\'e parameters used in the computation are therefore
	\[
	\mu=\frac{E}{2(1+\nu)}\approx1.30~\mathrm{GPa},
	\qquad
	\lambda=\frac{E\nu}{(1+\nu)(1-2\nu)}\approx3.02~\mathrm{GPa}.
	\]
	
	To examine the mesh dependence of the computed free-end displacement,
	we perform a mesh-refinement study for
	\(\iota/c=0.001,0.01,0.05\). The results are reported in
	Table~\ref{tab:beam-mesh-refinement}. We employ a family of
	quasi-uniform meshes, of which the mesh corresponding to
	\(h_{\max}/c=0.479457\) is shown in Figure~\ref{fig:beam-mesh}.
	
	\begin{table}[htbp]
		\centering
		\small
		\setlength{\tabcolsep}{8pt}
		\caption{Mesh refinement of the mean free-end displacement
			\(\delta_h~[\mu\mathrm{m}]\) and comparison with the
			Papargyri--Beskou and Lazopoulos-type reference values.}
		\label{tab:beam-mesh-refinement}
		\begin{tabular}{c|ccc}
			\hline
			\(h_{\max}/c\)
			& \(\iota/c=0.001\)
			& \(\iota/c=0.01\)
			& \(\iota/c=0.05\)\\
			\hline
			\(0.866025\) & \(10.4429\) & \(10.4420\) & \(10.4202\)\\
			\(0.702175\) & \(9.88952\) & \(9.88820\) & \(9.85640\)\\
			\(0.479457\) & \(9.53157\) & \(9.52921\) & \(9.47242\)\\
			\(0.354809\) & \(9.28798\) & \(9.28397\) & \(9.18772\)\\
			\(0.169876\) & \(9.16369\) & \(9.15652\) & \(8.98647\)\\
			\hline
			\(\delta_{\rm P}\) & \(9.14149\) & \(9.12915\) & \(9.07446\)\\
			\(\delta_{\rm L}\) & \(9.14138\) & \(9.11822\) & \(8.81112\)\\
			\hline
		\end{tabular}
	\end{table}
	
	\begin{figure}[H]
		\centering
		\includegraphics[width=0.5\textwidth]{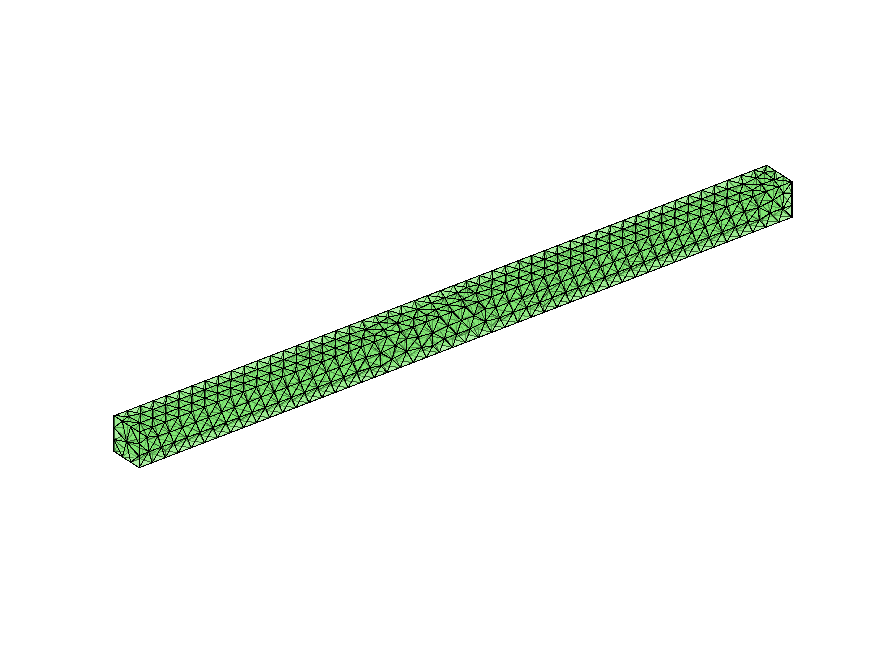}
		\caption{A quasi-uniform beam mesh with
			\(h_{\max}/c=0.479457\).}
		\label{fig:beam-mesh}
	\end{figure}
	
	For \(\iota/c=0.001,0.01,0.05\), the relative changes in the
	computed displacement between the two finest meshes are approximately
	\(1.36\%\), \(1.39\%\), and \(2.24\%\), respectively. These results
	indicate that the computed free-end displacement becomes stable under
	mesh refinement in the considered small-length-scale regime. On the
	finest mesh, the relative differences from the Papargyri--Beskou
	reference values are approximately \(0.24\%\), \(0.30\%\), and
	\(0.97\%\), respectively, whereas the corresponding differences from
	the Lazopoulos-type reference values are approximately \(0.24\%\),
	\(0.42\%\), and \(1.99\%\). Thus, for the two larger length-scale
	ratios considered here, the finest-mesh CR--CR results are closer to
	the Papargyri--Beskou reference values.
		\bibliographystyle{abbrv}
	\bibliography{ref.bib}

@article{Gallistl2017,
  author    = {Gallistl, Dietmar},
  title     = {Stable splitting of polyharmonic operators by generalized {Stokes} systems},
  journal   = {Mathematics of computation},
  year      = {2017},
  volume    = {86},
  pages     = {2555--2577},
  issn      = {0025-5718},
  doi       = {10.1090/mcom/3208},
  url       = {https://doi.org/10.1090/mcom/3208},
  publisher = {American Mathematical Society}
}

@article{Morley1968,
  author  = {Morley, L. S. D.},
  title   = {The triangular equilibrium element in the solution
             of plate bending problems},
  journal = {Aeronautical Quarterly},
  volume  = {19},
  number  = {2},
  pages   = {149--169},
  year    = {1968},
  doi     = {10.1017/S0001925900004546}
}

@article{BrennerSung2005,
  author  = {Brenner, Susanne C. and Sung, Li-Yeng},
  title   = {{$C^0$} interior penalty methods for fourth order
             elliptic boundary value problems on polygonal domains},
  journal = {Journal of Scientific Computing},
  volume  = {22--23},
  pages   = {83--118},
  year    = {2005},
  doi     = {10.1007/s10915-004-4135-7}
}

@article{ChenPaniQiu2022,
  author = {Chen, Huangxin and Pani, Amiya K. and Qiu, Weifeng},
  title = {A Mixed Finite Element Scheme for Biharmonic Equation with Variable Coefficient and von Kármán Equations},
  journal = {Communications in Computational Physics},
  year = {2022},
  volume = {31},
  number = {5},
  pages = {1434-1466},
  doi = {10.4208/cicp.OA-2021-0255},
  url = {https://doi.org/10.4208/cicp.OA-2021-0255}
}

@article{AskesAifantis2011,
  author  = {Askes, Harm and Aifantis, Elias C.},
  title   = {Gradient elasticity in statics and dynamics: An overview of formulations, length scale identification procedures, finite element implementations and new results},
  journal = {International Journal of Solids and Structures},
  volume  = {48},
  number  = {13},
  pages   = {1962--1990},
  year    = {2011},
  doi     = {10.1016/j.ijsolstr.2011.03.006}
}

@article{lazopoulos2010bending,
  author = {Lazopoulos, Konstantinos-Anastasios and Lazopoulos, Anastasios K.},
  title = {Bending and buckling of thin strain gradient elastic beams},
  journal = {European Journal of Mechanics - A/Solids},
  year = {2010},
  volume = {29},
  number = {5},
  pages = {837--843},
  doi = {10.1016/j.euromechsol.2010.04.001},
}

@article{papargyri2003bending,
  author = {Papargyri-Beskou, S. and Tsepoura, K. G. and Polyzos, Demosthenes and Beskos, Dimitri E.},
  title = {Bending and stability analysis of gradient elastic beams},
  journal = {International Journal of Solids and Structures},
  year = {2003},
  volume = {40},
  number = {2},
  pages = {385--400},
  doi = {10.1016/s0020-7683(02)00522-x},
}

@article{cai2017Discontinuous,
  author = {Cai, Zhiqiang and He, Cuiyu and Zhang, Shun},
  title = {Discontinuous finite element methods for interface problems: Robust a priori and a posteriori error estimates},
  journal = {SIAM Journal on Numerical Analysis},
  year = {2017},
  volume = {55},
  number = {1},
  pages = {400--418},
  doi = {10.1137/16m1056171},
}

@article{khakalo2017gradient,
  author = {Khakalo, Sergei and Niiranen, Jarkko},
  title = {Gradient-elastic stress analysis near cylindrical holes in a plane under bi-axial tension fields},
  journal = {International Journal of Solids and Structures},
  year = {2017},
  volume = {110--111},
  pages = {351--366},
  doi = {10.1016/j.ijsolstr.2016.10.025},
}

@article{scott1990finite,
  author = {Scott, L. Ridgway and Zhang, Shangyou},
  title = {Finite element interpolation of nonsmooth functions satisfying boundary conditions},
  journal = {Mathematics of Computation},
  year = {1990},
  volume = {54},
  number = {190},
  pages = {483--493},
  doi = {10.1090/s0025-5718-1990-1011446-7},
}

@article{li2021h2korn,
  author = {Li, Hongliang and Ming, Pingbing and Wang, Huiyu},
  title = {{$H^2$}-{Korn}'s inequality and the nonconforming elements for the strain gradient elastic model},
  journal = {Journal of Scientific Computing},
  year = {2021},
  volume = {88},
  number = {3},
  pages = {78},
  doi = {10.1007/s10915-021-01597-7},
}

@article{engel2002continuous,
  author = {Engel, Gerald and Garikipati, Krishna and Hughes, Thomas J. R. and Larson, Mats G. and Mazzei, Luca and Taylor, Robert L.},
  title = {Continuous/discontinuous finite element approximations of fourth-order elliptic problems in structural and continuum mechanics with applications to thin beams and plates, and strain gradient elasticity},
  journal = {Computer Methods in Applied Mechanics and Engineering},
  year = {2002},
  volume = {191},
  number = {34},
  pages = {3669--3750},
  doi = {10.1016/s0045-7825(02)00286-4},
}

@article{amanatidou2002mixed,
  author = {Amanatidou, E. and Aravas, Nikolaos},
  title = {Mixed finite element formulations of strain-gradient elasticity problems},
  journal = {Computer Methods in Applied Mechanics and Engineering},
  year = {2002},
  volume = {191},
  number = {15--16},
  pages = {1723--1751},
  doi = {10.1016/s0045-7825(01)00353-x},
}

@article{crouzeix1973conforming,
  author = {Crouzeix, Michel and Raviart, Pierre-Arnaud},
  title = {Conforming and nonconforming finite element methods for solving the stationary {Stokes} equations {I}},
  journal = {Revue fran{\c{c}}aise d'automatique, informatique, recherche op{\'{e}}rationnelle. Analyse num{\'{e}}rique},
  year = {1973},
  volume = {7},
  number = {R3},
  pages = {33--75},
  doi = {10.1051/m2an/197307r300331},
}

@misc{li2018new,
  author = {Li, Hongliang and Ming, Pingbing and Shi, Zhong-ci},
  title = {New nonconforming elements for linear strain gradient elastic model},
  year = {2018},
  howpublished = {arXiv preprint arXiv:1809.00819},
  eprint = {1809.00819},
  archiveprefix = {arXiv},
  primaryclass = {math.NA},
  url = {https://arxiv.org/abs/1809.00819},
}

@article{li2017two,
  author = {Li, Hongliang and Ming, Pingbing and Shi, Zhong-ci},
  title = {Two robust nonconforming {$H^2$}-elements for linear strain gradient elasticity},
  journal = {Numerische Mathematik},
  year = {2017},
  volume = {137},
  number = {3},
  pages = {691--711},
  doi = {10.1007/s00211-017-0890-x},
}

@article{zhang2009family,
  author = {Zhang, Shangyou},
  title = {A family of {3D} continuously differentiable finite elements on tetrahedral grids},
  journal = {Applied Numerical Mathematics},
  year = {2009},
  volume = {59},
  number = {1},
  pages = {219--233},
  doi = {10.1016/j.apnum.2008.02.002},
}

@article{Guzmán2012,
  author = {Guzm{\'{a}}n, Johnny and Leykekhman, Dmitriy and Neilan, Michael},
  title = {A family of non-conforming elements and the analysis of {Nitsche}'s method for a singularly perturbed fourth order problem},
  journal = {Calcolo},
  year = {2012},
  volume = {49},
  number = {2},
  pages = {95--125},
  doi = {10.1007/s10092-011-0047-8},
}

@article{Trygve2001,
  author = {Nilssen, Trygve K. and Tai, Xue-Cheng and Winther, Ragnar},
  title = {A robust nonconforming {$H^2$}-element},
  journal = {Mathematics of Computation},
  year = {2001},
  volume = {70},
  number = {234},
  pages = {489--505},
  doi = {10.1090/s0025-5718-00-01230-8},
}

@article{Gao2020,
  author = {Gao, Fu Chang and Lai, Ming Jun},
  title = {A new {$H^2$} regularity condition of the solution to {Dirichlet} problem of the {Poisson} equation and its applications},
  journal = {Acta Mathematica Sinica, English Series},
  year = {2020},
  volume = {36},
  number = {1},
  pages = {21--39},
  doi = {10.1007/s10114-019-8015-3},
}

@article{wang2018morley,
  author = {Wang, Wenqing and Huang, Xuehai and Tang, Kai and Zhou, Ruiyue},
  title = {{Morley--Wang--Xu} element methods with penalty for a fourth order elliptic singular perturbation problem},
  journal = {Advances in Computational Mathematics},
  year = {2018},
  volume = {44},
  number = {4},
  pages = {1041--1061},
  doi = {10.1007/s10444-017-9572-6},
}

@techreport{LasisSuli2003,
  author = {Lasis, Andris and S{\"{u}}li, Endre},
  title = {{Poincar{\'{e}}}-type inequalities for broken {Sobolev} spaces},
  year = {2003},
  number = {03/10},
  address = {Oxford},
  institution = {Oxford University Computing Laboratory},
  type = {Technical Report},
  url = {https://ora.ox.ac.uk/objects/uuid%3Adb8e7bdc-3336-4872-8f1c-df2838b06af7},
}

@misc{HuangTang2026SGE,
  author        = {Huang, Xuehai and Tang, Zheqian},
  title         = {A lowest-order robust mixed finite element method
                   with a third-order tensor variable for strain
                   gradient elasticity},
  year          = {2026},
  eprint        = {2607.17134},
  archivePrefix = {arXiv},
  primaryClass  = {math.NA},
  doi           = {10.48550/arXiv.2607.17134}
}

@article{HuangTang2025,
  author  = {Huang, Xuehai and Tang, Zheqian},
  title   = {Robust and optimal mixed methods for a fourth-order
             elliptic singular perturbation problem},
  journal = {Journal of Scientific Computing},
  volume  = {105},
  pages   = {72},
  year    = {2025},
  doi     = {10.1007/s10915-025-03101-x}
}

@article{Marini1985,
  author  = {Marini, Luisa Donatella},
  title   = {An inexpensive method for the evaluation of the solution
             of the lowest order {Raviart--Thomas} mixed method},
  journal = {SIAM Journal on Numerical Analysis},
  volume  = {22},
  number  = {3},
  pages   = {493--496},
  year    = {1985},
  doi     = {10.1137/0722029}
}

@article{chen2025nitsche,
  author = {Chen, Mingqing and Huang, Jianguo and Huang, Xuehai},
  title = {A nonconforming finite element method with the use of {Nitsche}'s technique for a strain gradient elastic model},
  journal = {Journal of Scientific Computing},
  year = {2025},
  volume = {104},
  number = {1},
  pages = {2},
  doi = {10.1007/s10915-025-02929-7},
}

@misc{huang2025optimal,
  author = {Huang, Jianguo and Huang, Xuehai and Tang, Zheqian},
  title = {An optimal and robust nonconforming finite element method for the strain gradient elasticity},
  year = {2025},
  howpublished = {arXiv preprint arXiv:2505.08461},
  eprint = {2505.08461},
  archiveprefix = {arXiv},
  primaryclass = {math.NA},
  url = {https://arxiv.org/abs/2505.08461},
}

@article{huang2026c0vem,
  author = {Huang, Jianguo and Yu, Yue},
  title = {A {$C^0$}-continuous nonconforming virtual element method for linear strain gradient elasticity},
  journal = {IMA Journal of Numerical Analysis},
  year = {2026},
  pages = {draf145},
  note = {Advance online publication},
  doi = {10.1093/imanum/draf145},
}

@book{ShiWang2010,
  author = {Shi, Zhong-ci and Wang, Ming},
  title = {Finite element methods},
  year = {2010},
  publisher = {Science Press},
  address = {Beijing},
  note = {in Chinese},
  isbn = {9787030264756},
}

@article{Ming2006,
  author = {Wang, Ming and Xu, Jinchao},
  title = {The {Morley} element for fourth order elliptic equations in any dimensions},
  journal = {Numerische Mathematik},
  year = {2006},
  volume = {103},
  number = {1},
  pages = {155--169},
  doi = {10.1007/s00211-005-0662-x},
}

@article{argyris1968tuba,
  author = {Argyris, John H. and Fried, Isaac and Scharpf, Dieter W.},
  title = {The {TUBA} family of plate elements for the matrix displacement method},
  journal = {The Aeronautical Journal},
  year = {1968},
  volume = {72},
  number = {692},
  pages = {701--709},
  doi = {10.1017/s000192400008489x},
}

@article{brenner2004korn,
  author = {Brenner, Susanne Cecelia},
  title = {{Korn}'s inequalities for piecewise {$H^1$} vector fields},
  journal = {Mathematics of Computation},
  year = {2004},
  volume = {73},
  number = {247},
  pages = {1067--1087},
  doi = {10.1090/s0025-5718-03-01579-5},
}

@book{maz_i_a2010elliptic,
  author = {Maz'ya, Vladimir G. and Rossmann, J{\"{u}}rgen},
  title = {Elliptic equations in polyhedral domains},
  year = {2010},
  series = {Mathematical Surveys and Monographs},
  volume = {162},
  publisher = {American Mathematical Society},
  address = {Providence, RI},
  doi = {10.1090/surv/162},
  isbn = {9780821849835},
}

@article{brenner1992linear,
  author = {Brenner, Susanne Cecelia and Sung, Li-Yeng},
  title = {Linear finite element methods for planar linear elasticity},
  journal = {Mathematics of Computation},
  year = {1992},
  volume = {59},
  number = {200},
  pages = {321--338},
  doi = {10.1090/s0025-5718-1992-1140646-2},
}

@article{aifantis1992role,
  author = {Aifantis, Elias C.},
  title = {On the role of gradients in the localization of deformation and fracture},
  journal = {International Journal of Engineering Science},
  year = {1992},
  volume = {30},
  number = {10},
  pages = {1279--1299},
  doi = {10.1016/0020-7225(92)90141-3},
}

@article{chen2026robust,
  author = {Chen, Mingqing and Huang, Jianguo and Huang, Xuehai},
  title = {A robust {$C^0$} interior penalty method for a gradient-elastic {Kirchhoff} plate model},
  journal = {Journal of Scientific Computing},
  year = {2026},
  volume = {106},
  number = {3},
  pages = {79},
  doi = {10.1007/s10915-026-03198-8},
}

@article{chen2023robust,
  author = {Chen, Mingqing and Huang, Jianguo and Huang, Xuehai},
  title = {A robust lower-order mixed finite element method for a strain gradient elastic model},
  journal = {SIAM Journal on Numerical Analysis},
  year = {2023},
  volume = {61},
  number = {5},
  pages = {2237--2260},
  doi = {10.1137/22m151649x},
}

@article{riesselmann2024efficient,
  author = {Riesselmann, Johannes and Rezaei, Nasrin and Placidi, Luca and Balzani, Daniel},
  title = {An efficient mixed finite element formulation for {3D} strain gradient elasticity},
  journal = {Computer Methods in Applied Mechanics and Engineering},
  year = {2024},
  volume = {432},
  pages = {117422},
  doi = {10.1016/j.cma.2024.117422},
}

@article{liao2023taylor,
  author = {Liao, Yulei and Ming, Pingbing and Xu, Yun},
  title = {{Taylor--Hood} like finite elements for nearly incompressible strain gradient elasticity problems},
  journal = {Journal of Scientific Computing},
  year = {2023},
  volume = {95},
  number = {1},
  pages = {4},
  doi = {10.1007/s10915-023-02135-3},
}

@article{liao2019family,
  author = {Liao, Yulei and Ming, Pingbing},
  title = {A family of nonconforming rectangular elements for strain gradient elasticity},
  journal = {Advances in Applied Mathematics and Mechanics},
  year = {2019},
  volume = {11},
  number = {6},
  pages = {1263--1286},
  doi = {10.4208/aamm.oa-2018-0268},
}

@article{wriggers2023virtual,
  author = {Wriggers, Peter and Hudobivnik, Bla{\v{z}}},
  title = {Virtual element formulation for gradient elasticity},
  journal = {Acta Mechanica Sinica},
  year = {2023},
  volume = {39},
  number = {4},
  pages = {722306},
  doi = {10.1007/s10409-022-22306-x},
}

@article{hosseini20223d,
  author = {Hosseini, Seyyed Bahram and Niiranen, Jarkko},
  title = {{3D} strain gradient elasticity: Variational formulations, isogeometric analysis and model peculiarities},
  journal = {Computer Methods in Applied Mechanics and Engineering},
  year = {2022},
  volume = {389},
  pages = {114324},
  doi = {10.1016/j.cma.2021.114324},
}
\end{document}